\documentclass[a4paper,reqno]{paper}
\usepackage[utf8]{inputenc}
\usepackage[T1]{fontenc}
\usepackage{amsfonts,enumerate}
\usepackage{amsmath}
\usepackage{amssymb}
\usepackage{amsthm}
\usepackage{graphics}
\usepackage[backref = page]{hyperref}
\hypersetup{
colorlinks =  true, 
linkcolor = purple!80, 
citecolor =teal
}

\usepackage[capitalize]{cleveref}
\usepackage{tikz}
\usepackage{tikz-cd}

\usepackage{geometry}
\usepackage{amsfonts}
\usepackage{graphicx}
\usepackage{genyoungtabtikz}
\usepackage{dsfont}
\usepackage{mathrsfs}
\usepackage{ytableau}
\usepackage{stmaryrd}

\usepackage{aliascnt}
\usepackage{mynewcommands}

\usepackage{titletoc,tocloft}
\let\svthefootnote\thefootnote
\newcommand\freefootnote[1]{%
  \let\thefootnote\relax%
  \footnotetext{#1}%
  \let\thefootnote\svthefootnote%
}

    \definecolor{mygreen}{RGB}{3,160,74}

\newcommand\gry{\Yfillcolour{black!40}}
\newcommand\white{\Yfillcolour{white}}

\begin{document}

\sloppy

\title{Entropy of affine permutations and universality of affine atomic lengths}
 \author{
 Nathan Chapelier-Laget\thanks{Université du Littoral Côte d'Opale.
 Email address: {\tt nathan.chapelier@univ-littoral.fr}
 },
 Thomas Gerber\thanks{Université Lyon 1.
 Email address: {\tt gerber@math.univ-lyon1.fr}
 },
 Nicolas Jacon \thanks{Université Champagne-Ardenne.
 Email address: {\tt nicolas.jacon@univ-reims.fr}
 },
 Cédric Lecouvey\thanks{Université de Tours, Institut Denis-Poisson.
 Email address: {\tt cedric.lecouvey@univ-tours.fr}
 }.
 }
\maketitle
\freefootnote{}


\hrule 

\begin{abstract}
We introduce and study the notion of entropy of affine permutations and prove that it coincides with the atomic length 
associated with the sum of the fundamental weights for a type $A$ affine root system, 
as defined by the first two authors. 
We then establish an analogue of the Granville-Ono theorem by showing that any nonnegative integer can be realised as the entropy of an affine permutation or alternatively, 
as the size of a core multipartition as introduced by the last two authors. 
Our proof uses an additive combinatorics theorem due to Hall on difference sets of permutations modulo $n$. 
More generally, we give a polynomial expression of the atomic length associated with any dominant weight in affine type $A$ and investigate the problem of its universality. 
Beyond type $A$, we are able to prove that the entropy of affine type $C_n$ permutations is universal when $2n+1$ is prime. 
This is achieved by establishing an analogue of Hall's theorem for the hyperoctahedral group based on Alon's combinatorial Nullstellensatz. 
We also propose conjectures generalising the results presented in the paper, each supported by computational evidence.
Finally, we show that in any affine classical type, the problem of the universality of the atomic length simplifies in large rank when the weight considered is conveniently adjusted.
\end{abstract}

\hrule



\section*{Introduction}

\subsubsection*{Universal quadratic forms and beyond}

Representing nonnegative integers by sums of squares is a very classical
problem in number theory dating back at least to the work of Fermat in the 17th century.
Later, Lagrange obtained an iconic result: any nonnegative integer
can be written as the sum of at most four squares. This "four-square theorem" has led to many generalisations. 
For example, Ramanujan has proved that there are exactly $54$ possible quadruples of
integers $a\leq b\leq c\leq d$ such that the quadratic form $ax^{2}+by^{2}+cz^{2}+dt^{2}$ represents every nonnegative integer. 
We refer the reader to \cite{grosswald1985representations} for a gentle introduction to the problem of representations of integers by sums of squares. More generally, a
 quadratic form is said to be universal when it represents
every nonnegative integer. In 1993, Conway stated a famous conjecture claiming
that specific quadratic forms are universal if and only if they represent a simple
list of $9$ integers, the largest of which being $15$. This conjecture,
which reduces the universality problem to a simple computational check, was
proved by Bhargava in 2000.
\medskip

Among the many ways to generalise the universality problem of integral
quadratic forms, the following two will be particularly relevant for the
present paper. 
On the one hand, one can again consider a positive definite integral
quadratic form on $\mathbb{Z}^{n}$, but study its universality only on a
subset of $\mathbb{Z}^{n}$. For example, one can impose that the $n$-tuples of
possible integers do not have repetitions. 
It was proved by Wright in 1933 that
for each integer $s\geq5$,  there exists a largest integer $N(s)$ which is not
expressible as a sum of $s$ distinct non-zero squares (i.e. all integers
larger than or equal to $N(s)$ can be written as the sum of $s$ distinct
squares). We refer to \cite{PaulT1994} for more precise references and
estimations of the bound $N(s)$. 
On the other hand,  one can relax the assumption that the polynomial is a quadratic form to study the universality of general integral polynomials of degree $2$. This problem is significantly more complex; consequently, the following discussion will focus on specific families of such polynomials. We refer
the reader to \cite{ChanOh, chan2015representation} for a review of this question.
\medskip

\medskip

In addition to its interest in number theory, the previous universality problem also
has applications in combinatorics and representation theory. Recall that a
partition $\lambda$ of
size $\left\vert \lambda\right\vert $ is a sequence
$\lambda=(\lambda_{1}\geq\cdots\geq\lambda_{m})$ of nonnegative integers such
that $\left\vert \lambda\right\vert =\lambda_{1}+\cdots+\lambda_{m}$.
A  partition is called an \textit{$n$-core} if it has no hook of length $n$.
For instance, the partition $(10,6,3,3)$ of $22$ is not a $7$-core
since it has a $7$-hook, represented in gray in its Young diagram below. However, one can check that it is a $5$-core.
$$\ds
\Yboxdim{8pt}
\young(<><><><><><><><><><>,<>!\gry<><><><><>,!\white<>!\gry<>!\white<>,<>!\gry<>!\white<>)
$$
When $n=p$ is a prime number, these $n$-cores play an important
role in the representation theory of the symmetric groups in characteristic
$p$ \cite{Brauer1947, Robinson1947}, or, for a general integer $n$, in the representation theory
of the Hecke algebras specialised at
an $n$-th root of unity \cite{JamesMathas1996}. They indeed parametrise the so-called blocks in their
associated decomposition matrices. 
With this in mind, enumerating the $n$-cores becomes an important problem,
and the particular question of finding $n$-cores of a given size is already highly non-trivial.
It turns out that these always exist, as long as $n\geq 4$, and this so-called "$n$-core conjecture" was definitively settled in 1996 by
Granville and Ono \cite{GO1996}.
In their proof, two results due to Garvan, Kim and Stanton \cite{GKS1990} play an essential role: 
the product formula for the generating series of the $n$-core partitions,
as well as the polynomial formula for the size of the $n$-cores, which reads
\begin{equation}\label{GKS_pol}
U(x)=\frac{n}{2}\sum_{i=1}^{n}x_{i}^{2}
+\sum_{i=1}^{n}(i-1)x_{i},
\end{equation}
for $x\in \mathcal{Q} = \{(x_{1},\ldots,x_{n})\in\Z^n \mid x_{1}+\cdots + x_{n}=0\}$,
the subset of $\Z^n$ known to parametrise the $n$-cores.
Therefore, the veracity of the $n$-core conjecture
is equivalent to the universality of $U$, a non homogeneous polynomial of degree two, on $\mathcal{Q}$.
 
\medskip
 
Core partitions also have an interesting interpretation in the representation theory of simple affine Lie algebras. 
These algebras were classified by Kac according to their root system. 
Such a root system allows us to define two important $\mathbb Z$-lattices: the root lattice generated by the simple roots, 
and the weight lattice generated by the fundamental weights. The associated irreducible representations are then labeled by the dominant weights, which are the elements in the cone of nonnegative linear combinations of the fundamental weights. Regarding simple roots, they define affine reflections which generate a Coxeter group $W$ called the Weyl group. 
Both lattices are related by the following crucial property : given a dominant weight $\Lambda$ and an element $w$ in $W$, the difference $\Lambda-w(\Lambda)$ is a linear combination of the simple roots with nonnegative integer coefficients. We refer the reader to \cite{Carter2005} for a complete exposition on affine Lie algebras and their representations. 

\medskip

In \cite{CG2022}, the two first authors define the atomic length of $w$ associated to $\Lambda$  as the sum of the coefficients appearing in the previous decomposition of $\Lambda-w(\Lambda)$. When $\Lambda=\Lambda_0$, the fundamental weight associated to the node $0$ of the Dynkin diagram of type $A_{n-1}^{(1)}$, 
one can easily associate an $n$-core to $w$ so that the atomic length equals its size. 
Hence, the atomic length is the relevant notion to generalise the $n$-core conjecture to this broader, representation-theoretic context. 
It is worth noting that all the aforementioned notions also apply to simple finite-dimensional Lie algebras. 
For $\mathfrak{gl}_n(\C)$, the Weyl group is just the symmetric group on $\{1,\ldots,n\}$, 
and when the dominant weight $\Lambda$ considered is the sum of the fundamental weights, the atomic length of the permutation $w$ coincides with its entropy
\begin{equation}
\label{En}
E(w)=\frac{1}{2}\sum_{i=1}^{n}(w(i)-i)^2,
\end{equation}
which can be regarded as a measure of its distance to the identity. 
It is known that the image of this entropy map is an interval in $\mathbb{N}$ \cite{SackUlfarsson2011}. 
This is also true for any finite root system associated to finite-dimensional simple Lie algebras, as established in \cite{CG2022}.

\medskip

\subsubsection*{Affine entropy, atomic length, and universality}

In this paper, we first explore the notion of entropy for affine permutations (those in the Weyl group of type $A_{n-1}^{(1)}$). 
We prove that it coincides with the atomic length associated to the sum of the fundamental weights of the root system of type $A_{n-1}^{(1)}$ 
and admits a simple polynomial expression similar to its finite counterpart (\ref{En}). We then establish in Theorem \ref{Th_EntropyA} that this entropy (or the associated atomic length) is universal, that is represents all the nonnegative integers if and only if $n\geq5$. To do this, we need an additive combinatorics theorem due to Hall \cite{hall1952combinatorial} on the difference sets of permutations modulo $n$. 

\medskip 

Our second goal is to derive in Proposition \ref{Prop_PolyL} a polynomial expression for the atomic length in type $A_{n-1}^{(1)}$ associated to any dominant weight. This is indeed a crucial step in order to study its universality. This is achieved thanks to results established in \cite{Jacon:size} where such an expression 
is obtain, and gives the number of boxes of $(n,\boldsymbol{s})$-cores, the generalisation of the previous notion of $n$-cores which is relevant in the study of the orbit of a general dominant weight. 
Observe that these $(n,\boldsymbol{s})$-cores also  parametrise the  blocks in decomposition matrices of suitable generalisations of Hecke algebras, the Ariki-Koike algebras (see \cite{JaconLecouvey2021}). 
Hence, universality of the atomic length associated to a dominant weight of type $A_{n-1}^{(1)}$ 
gives the existence of a defect zero block in the associated family of Ariki-Koike algebras.
In particular, the universality of the affine entropy established in \Cref{sec_univA} 
yields a direct analogue of Granville and Ono's classic result on blocks \cite[Corollary 1]{GO1996}: this is \Cref{cor_blocks}.
We note that the polynomial formula for the atomic length associated to any dominant weight also has another interesting application:
 it allows to reduce in Proposition \ref{Prop_DilLattice} the problem of its universality to that of the half-square of the usual Euclidean norm on a simple  subset of $\frac{n}{\ell}\mathbb{Z}^n$ 
(here $\ell$ is the level of the considered weight, that is, 
the sum of its coordinates once expressed on the basis of fundamental weights). 
It is worth pointing out here the analogy with the problem of the decomposition of an integer as a sum of a fixed number of distinct squares evoked earlier: indeed, here again, we are looking for a decomposition in (half) the sum of $n$ squares with restrictive conditions.  
We further conjecture that the atomic length is universal when the dominant weight considered is a sum of fundamental weights corresponding to successive nodes in the Dynkin diagram of type $A_{n-1}^{(1)}$,
see \Cref{conj_trunc_affine}. We also propose a refinement of the Granville-Ono problem in \Cref{conj_refined_GO} based on the combinatorics of $(n,\boldsymbol{s})$-cores.

\medskip 

Finally, our third objective is to explore the universality problem beyond affine type $A$. 
In finite classical types, we prove universality of the corresponding atomic length in \Cref{thm_sat_trunc}, refining the results of \cite{CG2022}
and serving as further motivation for \Cref{conj_trunc_affine}.
We establish in particular that the entropy of affine signed permutations (i.e. elements of the affine Weyl group of type $C_{n}^{(1)}$) also coincides with an atomic length. 
We conjecture that it is yet again universal, and prove this conjecture in Corollary \ref{Cor_UnivB} when $2n+1$ is prime. 
Our proof uses Alon's combinatorial Nullstellensatz \cite{Alon} which enables us to obtain a generalisation of the theorem by Hall used in type $A$ (\Cref{Th_HallB}). We also establish in Theorem \ref{Th_univlarge} the universality of the atomic length in large rank and for a suitable choice of weight.

\medskip

The paper is organised as follows. 
We introduce the notion of entropy for affine permutations in \Cref{sec_entropy} and connect it to the atomic length associated with the sum $\rho$ of the fundamental weights in affine type $A_{n-1}^{(1)}$. In \Cref{sec_univA}, we prove the universality of the entropy on affine permutations. 
\Cref{sec_pol_formulas} is devoted to the polynomial expression of the atomic length for any weight in affine type $A$, its connection with the number of boxes in the $(n,\boldsymbol{s})$-cores and its interpretation in terms of the usual Euclidean norm, culminating in two conjectures.
We deal with all finite classical types in \Cref{sec_finite} for the finite counterpart of our favourite atomic lengths.
In \Cref{sec_univB}, we establish the universality of the entropy on signed permutations (type $C_{n}^{(1)}$). 
Finally, the universality of the atomic length associated with the weight $\rho^{\vee}$ in large rank is proved in \Cref{sec_large_rank}.

\section{Entropy of affine permutations and affine atomic length for the sum of fundamental weights}
\label{sec_entropy}

In \cite{CG2022}, the notion of atomic length was introduced, based on crystal theory and attached to a given dominant weight, 
in order to provide:
\vspace{-0.5em} 
\begin{itemize}
\setlength{\itemsep}{0.01pt} 
\setlength{\parskip}{0.01pt}  
\setlength{\parsep}{0.01pt}
\item a natural generalization of the size statistic on (core) partitions,
\item a natural weighted version of the Coxeter length function on Weyl groups.
\end{itemize}
\vspace{-0.2em} 
Note that in finite type $A$, the $\rho$-atomic length (that is, attached to the half-sum of the positive roots) 
coincides with the (half-)entropy of permutations.
Beyond type $A$, the study of the atomic length has led to a variety of results.
For instance, in \cite{CG2022}, a finite version of the  Granville-Ono theorem for any finite Weyl group was obtained;
a new way to count bigrassmannian elements as well as a refinement of inversion sets was also given; and for
the affine dominant weight $\La_0$, this led
to various results on generalised core partitions \cite{STW2023, LW2024, BCG2024}.

\medskip

The goal of this section is two-fold.
First, we introduce the notion of entropy for an affine permutation and establish that it coincides with the atomic length associated with $\rho$ as defined in \cite{CG2022}. 
Then, we give a simple expression of the entropy in terms of the Euclidean norm, which is more suited to study the problem of its universality.

\subsection{Generalities on the affine atomic length}
\label{subsec_AL_gen}

\medskip

\newcommand{\rank}{n}

Let $W$ be the Weyl group of a rank $\rank$ affine Kac-Moody Lie algebra, for which we have the well-known Dynkin classification \cite{Kac1984}.
We denote $\al_i, i=0,\ldots, \rank$ the real simple roots.
They span a Euclidean vector space denoted by $V$,
and for each $v = \sum_{i=1}^\rank a_i\al_i\in V$, we denote
$\h(v) = \sum_{i=1}^\rank a_i$, the \textit{height} of $v$.
We denote  by $h$ the Coxeter number.
Finally, let $\La_i, 0=1,\ldots, \rank \in V$ be the fundamental weights.

\begin{Def} Let $\La$ be a dominant weight.
The \textit{$\La$-atomic length} is the map $\sL_\La : W \to \N, w \mapsto \h(\La-w\La).$
\end{Def}

Note that $\sL_\La$ indeed takes nonnegative integer values since $\La-w\La$ is a nonnegative sum of positive roots.
A more explicit formula for the atomic length can be given by using the semi-direct decomposition of the affine Weyl group $W=M\rtimes {\mathring{W}}$
where $M$ is the coroot or root lattice (depending on the Dynkin type) and ${\mathring{W}}$ is the finite Weyl group.
For $w\in W$, we denote accordingly $w = t_{x} \overline{w}$ the decomposition as a product of a translation of $M$ 
and a finite Weyl group element.
The following has been established in \cite[Lemma 8.1]{CG2022}.

\begin{equation}\label{formula_AL}
\sL_{\La} (w)  = \sL_{\overline{\La}}(\overline{w}) - \ell \h({x}) + h \left( \langle \overline{\La} , \overline{w}^{-1}({x}) \rangle + \frac12 \Vert{x}\Vert^2 \ell \right)
\end{equation}
where $\overline{\La}$ is the finite part of $\La$ and $\ell$ is the level of $\La$.

\begin{Exa}
Let $\La=\La_0$. Formula \Cref{formula_AL} gives
$$\sL_{\La_0} (w) = \frac{h}{2}\Vert{x}\Vert^2 -\h({x}).$$
Note that this depends only on ${x}$, thus we obtain a statistic on affine Grassmannian elements.
In type $A_{\rank}^{(1)}$, this recovers the polynomial formula for the size of $(\rank +1)$-core partitions
established in \cite{GKS1990} and recalled in Formula \Cref{GKS_pol}.
In other types, it is this statistic that
has been recently studied under various perspectives \cite{STW2023, LW2024, BCG2024}.
\end{Exa}

The dominant weight $\rho=\sum_{i=0}^{\rank}\La_i$ will be of particular interest to us.
In this case, we simply denote $\sL=\sL_\rho$ the corresponding atomic length.
Denote further $\overline{\rho}=\sum_{i=1}^{\rank} \omega_i$
where $\omega_i$ are the fundamental weights for the corresponding finite root system. 
Recall that $\overline{\rho}$ is also the half-sum of the positive real roots.

\subsection{Atomic length for affine permutations using window notation}

Let now $n\geq 2$ and let us focus on type $A_{n-1}^{(1)}$ and its corresponding affine Weyl group $W$.
Classically, we construct the simple roots $\al_i$, for $i=1,\ldots, n-1$, inside $\R^{n}$ by setting $\al_i=\eps_i-\eps_{i+1}$
where $(\eps_i)_{1\leq i\leq n}$ is the standard basis of $\R^n$. It is easy to see that the reflections fixing the hyperplanes orthogonal to these simple roots generate a group ${\mathring{W}}$ isomorphic to the symmetric group on $n$ elements. 
The affine Weyl group $W$ is obtained by adding an affine reflection to this set of generators,
and we have the semidirect decomposition $W=M\rtimes {\mathring{W}}$ where $M=\bigoplus_{i=1}^{n-1}\Z\al_i$ is the root lattice.
The elements of $W$ can be regarded as affine permutations
and  represented by using the so-called \textquotedblleft window
notation\textquotedblright, see for instance \cite[Chapter 2, Section 1]{LLMSSZ2014}.
More precisely, we write
\begin{equation}
w=\left[
\begin{array}
[c]{cccccc}
1 & 2 & \cdots & \cdots & n-1 & n\\
w(1) & w(2) & \cdots & \cdots & w(n-1) & w(n)
\end{array}
\right]  \label{Window}
\end{equation}
where the $w(i)$'s are integers with distinct residues modulo $n$ summing up
to $\frac{n(n+1)}{2}$. An affine permutation satisfies in particular the periodicity property
$$w(i+kn)=w(i)+kn$$
for any $k$ in $\mathbb{Z}$ which permits to compute the image of any integer from the previous window notation.  The corresponding finite Weyl group is ${\mathring{W}}$, the symmetric group on $n$ elements.
It is easy to check that right multiplication
by an element $u\in {\mathring{W}}$ changes the $i$-th
column of $w$ into its $u(i)$-th column.
If follows that the translations $t_x$, $x\in M$, are the element $w\in W$ verifying
$w(i)=i+n{x}_{i}$, where ${x}=\sum_{i=1}^n {x}_i\eps_i$ (its decomposition in the standard basis of $\R^n$).

\medskip

Consider now $\sL=\sL_\rho$ where $\rho$ is the dominant weight $\Lambda_{0}+\cdots+\Lambda_{n-1}=n\Lambda_{0}+\overline{\rho}$ as in \Cref{subsec_AL_gen}.
Denote again, for $w\in W$,  $w=t_{{x}}\overline{w}$ with $\overline{w}$.
We have by Formula \Cref{formula_AL}
\begin{align*}
\label{L(W)}
\sL(w)& =\frac{n^{2}}{2}\Vert{x}\Vert ^{2}+\sL_{\overline{\rho}}(\overline
{w})-n(\mathrm{ht}({x})+\langle\overline{\rho},\overline{w}^{-1}
({x})\rangle)\\
&=
\frac{n^{2}}{2}\Vert{x}\Vert^{2}+\sL_{\overline{\rho}}(\overline{w})-n\langle
\overline{\rho},{x}-\overline{w}^{-1}({x})\rangle.\nonumber
\end{align*}

Also for any $\overline{w}\in{\mathring{W}}$, since the $i$-th coordinate of
$\overline{\rho}$ is equal to $\frac{n+1}{2}-i$, we have
\[
\left\Vert \overline{\rho}\right\Vert ^{2}=\frac{n(n+1)(n-1)}{12}.
\]
In particular, for any $\overline{w}\in{\mathring{W}}$, the $i$-th coordinate of
$\overline{w}(\overline{\rho})$ is equal to $\frac{n+1}{2}-\overline{w}
^{-1}(i)$.\footnote{It is not equal to $\frac{n+1}{2}-\overline{w}(i)$ because
the action of $W_0$ on $\mathbb{Z}^{n}$ is by permutation of the
coordinates.} 
Using $\sum_{i=1}^{n}\overline{w}(i)=\frac
{n(n+1)}{2}$ and 
$\langle\overline{w}(\overline{\rho}),\overline{\rho}\rangle=\langle\overline{w^{-1}}(\overline{\rho}),\overline{\rho}\rangle$,
we obtain
\begin{align*}
\sL(\overline{w})&=\left\Vert \overline{\rho}\right\Vert ^{2}-\langle\overline
{w}(\overline{\rho}),\overline{\rho}\rangle\\
& =\frac{n(n+1)(n-1)}{12}-\sum
_{i=1}^{n}\left(  \frac{n+1}{2}-\overline{w}(i)\right)  \left(  \frac{n+1}
{2}-i\right)  \\
& = \frac{n(n+1)(n-1)}{12}-\sum_{i=1}^{n}i\overline{w}(i)-\frac{n(n+1)^{2}}
{4}+\frac{n+1}{2}\sum_{i=1}^{n}\overline{w}(i)+\frac{n+1}{2}\sum_{i=1}^{n}i \\
& =
\frac{n(n+1)(n-1)}{12}-\frac{n(n+1)^{2}}{4}+\frac{n(n+1)^{2}}{2}-\sum
_{i=1}^{n}i\overline{w}(i)\\
& =\frac{n(n+1)(2n+1)}{6}-\sum_{i=1}^{n}i\overline{w}(i).
\end{align*}
By observing that
\[
\frac{1}{2}\sum_{i=1}^{n}\overline{w}(i)^{2}=\frac{1}{2}\sum_{i=1}^{n}
i^{2}=\frac{n(n+1)(2n+1)}{12},
\]
we get
\begin{equation}
\sL(\overline{w})=\frac{1}{2}\sum_{i=1}^{n}\overline{w}(i)^{2}-\sum_{i=1}
^{n}i\overline{w}(i)+\frac{n(n+1)(2n+1)}{12}.\label{L-wbar)}
\end{equation}
In fact, the previous formula extends to the expression of the atomic
length $\sL$ for $w$ in the affine Weyl group $W$ given in the window
notation (\ref{Window}).

\bigskip

We now define the affine analogue of the notion of entropy of a
permutation\footnote{
Note that the entropy of a permutation is usually defined as twice this quantity
in the literature, see \cite{conway2013sphere}.
}
by the exact same formula.

\begin{Def}\label{def_entropy}
The entropy of the affine permutation $w$ in $W$ is 
$$
E(w)=\frac{1}{2}\sum_{i=1}^{n}\left(  w(i)-i\right)  ^{2},
$$
\end{Def}

We shall prove in the following that
$E(W)=\mathbb{N}$, that is, each integer can be regarded as the entropy of an
affine permutation.

\medskip

\begin{Prop}\label{form_AL_window}
For any $w$ in $W$ given in window notation, we have
$$
\sL(w)=\frac{1}{2}\sum_{i=1}^{n}w(i)^{2}-\sum_{i=1}^{n}iw(i)+\frac
{n(n+1)(2n+1)}{12} = E(w).
$$
\end{Prop}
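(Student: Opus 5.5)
The proof has two parts. The second equality is elementary, while the first comes from the displayed formula for $\sL(w)$ derived from \Cref{formula_AL} together with the expression \eqref{L-wbar)} for $\sL(\overline{w})$.

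\textbf{Second equality.} Expand $\frac12\sum_i (w(i)-i)^2=\frac12\sum_i w(i)^2-\sum_i i\,w(i)+\frac12\sum_i i^2$. Then use $\frac12\sum_{i=1}^n i^2=\frac{n(n+1)(2n+1)}{12}$. This requires no property of $w$ at all.

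\textbf{First equality.} I will write $w=t_x\overline{w}$ with $x=\sum_i x_i\eps_i\in M$, so that $\sum_i x_i=0$. In window notation, $w(i)=\overline{w}(i)+n x_{\overline{w}(i)}$, because $t_x$ sends $j$ to $j+nx_j$.

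Let $Q(w)$ denote the right-hand side, $\frac12\sum_i w(i)^2-\sum_i i\,w(i)+\frac{n(n+1)(2n+1)}{12}$. Substituting and expanding:
\begin{itemize}
\item The quadratic term gives $\frac12\sum_i \overline{w}(i)^2+n\sum_i \overline{w}(i)x_{\overline{w}(i)}+\frac{n^2}{2}\sum_i x_{\overline{w}(i)}^2$.
\item The linear term gives $-\sum_i i\,\overline{w}(i)-n\sum_i i\,x_{\overline{w}(i)}$.
\end{itemize}
Since $\overline{w}$ is a bijection, $\sum_i x_{\overline{w}(i)}^2=\Vert x\Vert^2$ and $\sum_i \overline{w}(i)x_{\overline{w}(i)}=\sum_j j\,x_j$. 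By \eqref{L-wbar)}, the $x$-free part is exactly $\sL(\overline{w})$. Hence
$$Q(w)=\sL(\overline{w})+\frac{n^2}{2}\Vert x\Vert^2+n\Big(\sum_j j\,x_j-\sum_j \overline{w}^{-1}(j)\,x_j\Big).$$

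It then remains to match this with $\sL(w)=\frac{n^{2}}{2}\Vert x\Vert^{2}+\sL_{\overline{\rho}}(\overline{w})-n\langle\overline{\rho},x-\overline{w}^{-1}(x)\rangle$, i.e. to show
$$-\langle\overline{\rho},x-\overline{w}^{-1}(x)\rangle=\sum_j j\,x_j-\sum_j \overline{w}^{-1}(j)\,x_j.$$

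\textbf{Main obstacle: sign and permutation conventions.} The coordinates of $\overline{\rho}$ are $\frac{n+1}{2}-i$. Because $\sum_i x_i=0$, the constant $\frac{n+1}{2}$ drops out, leaving
$$\langle\overline{\rho},x\rangle=-\sum_j j\,x_j,\qquad \langle\overline{\rho},\overline{w}^{-1}(x)\rangle=-\sum_i i\,(\overline{w}^{-1}x)_i.$$
The action on coordinates is $(\overline{w}^{-1}x)_i=x_{\overline{w}(i)}$, consistent with the paper's footnote that $\overline{w}(\overline{\rho})$ has $i$-th coordinate $\frac{n+1}{2}-\overline{w}^{-1}(i)$. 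Reindexing with $j=\overline{w}(i)$ turns $\sum_i i\,x_{\overline{w}(i)}$ into $\sum_j \overline{w}^{-1}(j)\,x_j$. Both sides therefore agree.

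The step needing real care is this bookkeeping: whether $\overline{w}$ or $\overline{w}^{-1}$ acts on coordinates, and whether the window entry is $w(i)=\overline{w}(i)+nx_{\overline{w}(i)}$ or $\overline{w}(i)+nx_i$. I will fix these using the paper's stated conventions: right multiplication by $u\in{\mathring{W}}$ moves column $i$ to column $u(i)$, and $t_x(i)=i+nx_i$. If a mismatch appears, I will check the case of a pure translation and the case of a pure finite permutation separately, since both are already confirmed by the earlier computations.
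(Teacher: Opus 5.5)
Your proof is correct and follows essentially the paper's route: you split $w$ into a finite permutation and a translation, expand the right-hand side, absorb the translation-free part via (\ref{L-wbar)}), and match the remaining cross term with $-n\langle\overline{\rho},x-\overline{w}^{-1}(x)\rangle$ using $\sum_i x_i=0$. The only cosmetic difference is that you work directly with $w=t_x\overline{w}$, so that $w(i)=\overline{w}(i)+nx_{\overline{w}(i)}$, whereas the paper writes $w=\overline{w}t_y$ and converts at the end through $x=\overline{w}(y)$; you also spell out the elementary identity with $E(w)$, which the paper leaves implicit.
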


\begin{proof}
For any $w\in W$, set $c=\frac{n(n+1)(2n+1)}{12}$ and
\[
\mathcal{G}(w)=\frac{1}{2}\sum_{i=1}^{n}w(i)^{2}-\sum_{i=1}^{n}iw(i)+c.
\]
By setting $w=\overline{w}t_{{y}}$, we obtain
\[
w(i)=\overline{w}(i)+n{y}_{i},\quad i=1,\ldots,n
\]
and therefore
\begin{align*}
\mathcal{G}(w) & =\frac{1}{2}\sum_{i=1}^{n}\left(  \overline{w}(i)+n{y}
_{i}\right)  ^{2}-\sum_{i=1}^{n}i\left(  \overline{w}(i)+n{y}_{i}\right)
+c\\
& = \frac{n^{2}}{2}\left\Vert {y}\right\Vert ^{2}+\frac{1}{2}\sum_{i=1}
^{n}\overline{w}(i)^{2}-\sum_{i=1}^{n}i\overline{w}(i)+n\sum_{i=1}
^{n}\overline{w}(i){y}_{i}-n\sum_{i=1}^{n}i{y}_{i}+c.
\end{align*}

By using (\ref{L-wbar)}), this gives
\begin{align*}
\mathcal{G}(w) & = \frac{n^{2}}{2}\left\Vert {y}\right\Vert ^{2}+\sL(\overline
                    {w})-n\sum_{i=1}^{n}{y}_{i}(i-\overline{w}(i)) \\
               & = \frac{n^{2}}{2}\left\Vert
{y}\right\Vert ^{2}+\sL(\overline{w})-n\sum_{i=1}^{n}{y}_{i}\left(
\left(  \frac{n+1}{2}-\overline{w}(i)\right)  -\left(  \frac{n+1}{2}-i\right)
\right)  \\
& =
\frac{n^{2}}{2}\left\Vert {y}\right\Vert ^{2}+\sL(\overline{w})-n\langle
{y},\overline{w}^{-1}(\overline{\rho})-\overline{\rho}\rangle \\
&=\frac{n^{2}
}{2}\left\Vert {y}\right\Vert ^{2}+\sL(\overline{w})-n\langle{y}
,\overline{w}^{-1}(\overline{\rho})\rangle+n\langle{y},\overline{\rho
}\rangle\\
& =
\frac{n^{2}}{2}\left\Vert {y}\right\Vert ^{2}+\sL(\overline{w})-n\langle
\overline{w}({y}),\overline{\rho}\rangle+n\langle{y},\overline{\rho
}\rangle \\
&=\frac{n^{2}}{2}\left\Vert {y}\right\Vert ^{2}+\sL(\overline
{w})-n\langle\overline{w}({y})-{y},\overline{\rho}\rangle.
\end{align*}
Now observe that we have $w=\overline{w}t_{{y}}=w=\overline{w}t_{{y}
}\overline{w}^{-1}\overline{w}=t_{\overline{w}({y})}\overline{w}$. With the
decomposition $w=t_{{x}}\overline{w}$ used in the expression of the atomic
length $\sL$ obtained in (\cite{{CG2022}}), we thus have ${x}
=\overline{w}({y})$ and this gives the desired equality:
\[
\mathcal{G}(w)=\frac{n^{2}}{2}\Vert{x}\Vert ^{2}+\sL(\overline
{w})-n\langle{x}-\overline{w}^{-1}({x}),\overline{\rho}\rangle
=\sL(w).
\]
\end{proof}

\bigskip

\subsection{\texorpdfstring{Universality problem for $\sL$, Euclidean norm and the the 290 theorem}{Universality problem for sL, Euclidean norm and the the 290 theorem}} \label{subsec_eucli_290}

Proposition \ref{form_AL_window} allows us to express the affine atomic length 
associated to the sum of the fundamental weights as a quadratic polynomial in
$n$ variables. Nevertheless, this polynomial is non homogeneous. Deciding
whether a general integral quadratic polynomial in $n$ variables is universal
(i.e. represents all the integers when each of its indeterminates runs over
$\mathbb{Z}$) is a difficult problem less well understood than the case of
quadratic forms (i.e. homegeneous quadratic polynomials).\ We refer the reader
to \cite{ChanOh} for a survey on this question. We are going to see that up to a
translation by a vector in $\mathbb{Z}^{n}$, we can study the universality of
$\sL$ from that of a quadratic form on a subset of $\mathbb{Z}^{n}$.

\medskip

\subsubsection{The sets involved}
Denote by $\mathcal{Q}_{n}$ the root lattice of type $A_{n-1}$, that is
$$
\mathcal{Q}_{n}=\left\{  (x_{1},\ldots,x_{n})\in\mathbb{Z}^{n}\mid
x_{1}+\cdots+x_{n}=0\right\}.
$$

We introduce now three sets that we shall use later.

\medskip

The first set is
\begin{equation}
    D_{n} := \left\{  (y_{1},\ldots,y_{n})\in\mathbb{Z}^{n}\mid\left\{
\begin{array}
[c]{c}
y_{i}\neq y_{j}\mod n,1\leq i<j\leq n\\
y_{1}+\cdots+y_{n}=\frac{n(n+1)}{2}
\end{array}
\right.  \right\}.
\end{equation}
This set is nothing but all the possible window notation vectors of the elements of $W$,  hence $W \simeq D_n$. For $w \in W$ we write 
\begin{equation}\label{vector window notation}
    y_w := (w(1),\dots, w(n)) \in D_n
\end{equation}
 its corresponding window notation vector.

\medskip

The second set is
\begin{equation}
\Delta_{n} := \left\{  (x_{1},\ldots,x_{n})\in\mathbb{Z}^{n}\mid\left\{
\begin{array}
[c]{c}
x_{i}+i\neq x_{j}+j\mod n,1\leq i<j\leq n\\
x_{1}+\cdots+x_{n}=0
\end{array}
\right.  \right\} \subset \mathcal{Q}_n.
\end{equation}

We define now for $i = 1,\dots, n-1$ the polynomial
$$
T_i := x_1 + \dots + x_{i-1} + 2x_i + x_{i+1} + \dots + x_{n-1}.
$$
Then the third set is given by
\begin{equation}
X_{n}:=\left\{  (x_1,\ldots,x_{n-1})\in\mathbb{Z}^{n-1}~|~\left\{
\begin{array}
[c]{c}
x_{i}+i\neq x_{j}+j\mod  n \quad \mathrm{for} \quad 1\leq i<j\leq n-1\\
T_i \neq (n-i) ~ \mathrm{mod} ~ n \quad \mathrm{for} \quad 1\leq i< n-1
\end{array}
\right.  \right\}.
\end{equation}

\bigskip

\subsubsection{The maps and quadratic forms involved}
From Proposition \ref{form_AL_window} it should be clear that we want to know whether the
quadratic polynomial
\begin{equation}\label{def P}
P(y_{1},\ldots,y_{n}) := \frac{1}{2}\sum_{i=1}^{n}y_{i}^{2}-\sum_{i=1}^{n}
iy_{i}+\frac{n(n+1)(2n+1)}{12}
\end{equation}
is universal on $D_n$.

\medskip

We introduce now the two following maps 
$$
\begin{array}{ccccc}
     \mathscr{C} & : & D_n                 & \longrightarrow & \Delta_n  \\
       &   & (y_1,~y_2,~\dots,~y_n) & \longmapsto     & (y_1-1,~ y_2-2,~\dots,~ y_n-n)
\end{array}
$$
and
$$
\begin{array}{ccccc}
     \mathrm{pr} & : & \Delta_n                 & \longrightarrow & X_n  \\
       &   & (x_1,~\dots,~x_n) & \longmapsto     & (x_1,~\dots, ~x_{n-1}),
\end{array}
$$

\medskip

and the two quadratic forms
\begin{equation}\label{form Q}
Q(x_{1},\ldots,x_{n}) := \frac{1}{2}\sum_{i=1}^{n}x_{i}^{2}
\end{equation}
and
\begin{equation}\label{def_q}
q(x_{1},\ldots,x_{n-1}) := \sum_{i=1}^{n-1}x_{i}^{2}+\sum_{1\leq i<j\leq
n-1}x_{i}x_{j}.
\end{equation}

\bigskip

\subsubsection{The statements, examples and problem of Section \ref{subsec_eucli_290}}

The main proposition of this section is the following, which shows at once all the sets and maps involved in the universality of the $\rho$-atomic length in type $A_{n-1}^{(1)}$, where we recall that $W$ is the corresponding affine Weyl group. Recall also that $y_w$ is defined in (\ref{vector window notation}).

\bigskip

\begin{Prop}\label{prop recap}
\label{Prop_reform} We have
\begin{enumerate}
    \item The maps $\mathscr{C}$ and and $\mathrm{pr}$ are bijective. 
    \item $\sL(w)= P(y_w) = Q\big(\mathscr{C}(y_w)\big) = q\big(\mathrm{pr}(\mathscr{C}(y_w))\big)$ for any $w\in W$.
    \item The following are equivalent:
    \begin{enumerate}
        \item $P$ is universal on $D_{n}$,
        \item $Q$ is universal on $\Delta_n$,
        \item $q$ is universal on $X_n$.
\end{enumerate}
\end{enumerate}

\bigskip

To summarize, the following diagram is commutative and the universality of the atomic length $\sL$ on $W$ is equivalent to the universality of each vertical map below

\begin{center}
\begin{tikzcd}
W \arrow[r, "\simeq"] \arrow[rd, "\sL"'] 
  & D_n \arrow[r, "\mathscr{C}"] \arrow[d, "P"', pos=0.35] 
  & \Delta_n \arrow[r, "\mathrm{pr}"] \arrow[ld, "Q"', pos=0.45] 
  & X_n \arrow[lld, "q", pos=0.45] \\
  & \mathbb{Z} & &
\end{tikzcd}
\end{center}

\end{Prop}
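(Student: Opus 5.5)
The proof is a direct verification. Part 1 is checked map by map, part 2 reduces to two elementary algebraic identities, and part 3 follows formally from parts 1 and 2. The only real input from earlier in the paper is \Cref{form_AL_window}, which gives $\sL(w)=P(y_w)$. Throughout, the identification $W\simeq D_n$, $w\mapsto y_w$, is the bijection between affine permutations and their window vectors recalled before the definition of $D_n$.

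\textbf{Part 1.} For $\mathscr{C}$, take $y\in D_n$ and set $x=\mathscr{C}(y)$, so that $x_i+i=y_i$. The condition $y_i\not\equiv y_j \bmod n$ becomes exactly $x_i+i\not\equiv x_j+j \bmod n$. Moreover $\sum_i x_i=\frac{n(n+1)}{2}-\sum_i i=0$. Hence $\mathscr{C}$ lands in $\Delta_n$, and the translation $x\mapsto (x_1+1,\ldots,x_n+n)$ is an inverse.

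For $\mathrm{pr}$, the candidate inverse is $(x_1,\ldots,x_{n-1})\mapsto (x_1,\ldots,x_{n-1},-S)$ with $S=x_1+\cdots+x_{n-1}$. This already accounts for the zero-sum condition, so injectivity is clear. It remains to match the residue conditions that involve the index $n$. The condition $x_i+i\not\equiv x_n+n \pmod n$ with $x_n=-S$ reads $x_i+S+i\not\equiv 0$, that is, $T_i\not\equiv n-i \pmod n$. This is precisely the second family of conditions defining $X_n$. The conditions among the indices $i<j\le n-1$ are unchanged.

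I expect this index bookkeeping to be the only delicate point. The condition must be imposed for every $1\le i\le n-1$, so the range ``$1\le i<n-1$'' in the definition of $X_n$ should be read as $1\le i\le n-1$ for the bijection to hold. I would state this convention explicitly in the proof.

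\textbf{Part 2.} The equality $\sL(w)=P(y_w)$ is \Cref{form_AL_window}. Next, expanding
$$Q(\mathscr{C}(y))=\frac12\sum_i (y_i-i)^2=\frac12\sum_i y_i^2-\sum_i i y_i+\frac12\sum_i i^2,$$
and since $\frac12\sum_i i^2=\frac{n(n+1)(2n+1)}{12}$, this equals $P(y)$.

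Finally, for $x\in\Delta_n$ we substitute $x_n=-S$ into $Q$:
$$Q(x)=\frac12\Big(\sum_{i<n}x_i^2+S^2\Big)=\frac12\Big(2\sum_{i<n}x_i^2+2\sum_{i<j<n}x_ix_j\Big)=q(\mathrm{pr}(x)).$$

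\textbf{Part 3.} The three sets are in bijection compatibly with the three functions, which is exactly the commutative diagram. Therefore $P(D_n)=Q(\Delta_n)=q(X_n)=\sL(W)$, and the universality of any one of $P$, $Q$, $q$ is equivalent to the universality of the others, and to that of $\sL$ on $W$.
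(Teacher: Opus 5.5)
Your proposal is correct and follows the paper's proof essentially step by step: the translation inverse for $\mathscr{C}$, the substitution $x_n=-(x_1+\cdots+x_{n-1})$ for $\mathrm{pr}$ and for the identity $Q=q\circ\mathrm{pr}$, the direct expansion for $P=Q\circ\mathscr{C}$, and part 3 as a formal consequence. Your index remark is also right. The paper's own proof checks $T_i\not\equiv n-i \pmod n$ for all $1\le i<n$, so the range $1\le i<n-1$ in the definition of $X_n$ is a typo; without the condition at $i=n-1$, $\mathrm{pr}$ would not be surjective onto $X_n$.
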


\begin{proof}\
\begin{enumerate}
    \item 
    \begin{itemize}
        \item First of all, the map $\mathscr{C}$ is well-defined because 
    $$
    \sum_{i=1}^ny_i - i = \sum_{i=1}^ny_i - \sum_{i=1}^n i = \frac{n(n+1)}{2} - \frac{n(n+1)}{2} = 0,
    $$
    and if $i \neq j$ then $y_i \neq y_j ~\mathrm{mod}~n$, which gives
    $$
    (y_i - i) + i \neq (y_j - j) + j ~\mathrm{mod}~n.
    $$
    With respect to the bijectivity:
    $$
    \mathscr{C}(y) = \mathscr{C}(z) \Longleftrightarrow (y_1-1,\dots,y_n - n) = (z_1-1,\dots,z_n - n) \Longleftrightarrow y = z,
    $$ 
    hence the injectivity. If $x=(x_1,\dots, x_n) \in \Delta_n$ then we claim that $(x_1+1,\dots,x_n+n) \in D_n$ and since clearly $\mathscr{C}(x_1+1,\dots,x_n+n) = x$, we are done. Let us show the claim: 
    $$
    \sum_{i=1}^n x_i + i = \sum_{i=1}^nx_i + \sum_{i=1}^n i = 0 + \frac{n(n+1)}{2} = \frac{n(n+1)}{2},
    $$

    and since $x \in \Delta_n$ one has $x_{i}+i\neq x_{j}+j \mod n$ for $1\leq i<j\leq n$, which is precisely the second condition required for $(x_1+1,\dots,x_n+n)$   to belong to $D_n$.

    \item The map $\mathrm{pr}$ is clearly a bijection between the hyperplane $x_1 + \dots + x_n = 0$ and $\mathbb{Z}^{n-1}$. Therefore, we only need to check that the the conditions $x_{i}+i\neq x_{n}+n\mod n$ for $1\leq i < n$ are the same as $T_i \neq (n-i) ~ \mathrm{mod} ~ n$. This is a direct computation:
    \begin{equation*}
        x_{i}+i\neq x_{n}+n~\mathrm{mod}~n  \Longleftrightarrow ~ x_{i}+i\neq -(x_{1}+\dots +x_{n-1}) + n~\mathrm{mod}~n 
                                              \Longleftrightarrow ~ T_i \neq  ~(n -i)~\mathrm{mod}~n
    \end{equation*}
    \end{itemize}
    \item The first equality follows from Proposition \ref{form_AL_window} and (\ref{def P}). Set $y_{i}=x_{i}+i$. For the second equality it follows from the computation
\begin{align*}
P(y_{1},\ldots,y_{n})&=\frac{1}{2}\sum_{i=1}^{n}(x_{i}+i)^{2}-\sum_{i=1}
^{n}i(x_{i}+i)+\frac{n(n+1)(2n+1)}{12}\\
&=\frac{1}{2}\sum_{i=1}^{n}x_{i}^{2}+\sum_{i=1}^{n}ix_{i}+\frac{1}{2}\sum
_{i=1}^{n}i^{2}-\sum_{i=1}^{n}ix_{i}-\sum_{i=1}^{n}i^{2}+\frac{1}{2}\sum
_{i=1}^{n}i^{2} \\
& =\frac{1}{2}\sum_{i=1}^{n}x_{i}^{2},
\end{align*}
which we apply to our setting, that is $y = y_w$ and $x = f(y_w)$, where the condition $y_i = x_i + i$ is satisfied. 

Let us show now the last equality. In each element of $\Delta_{n}$ we have $x_{n}=-(x_{1}
+\cdots+x_{n})$ which gives
\begin{align*}
Q(x_{1},\ldots,x_{n})& =\frac{1}{2}\sum_{i=1}^{n}x_{i}^{2}=\frac{1}{2}\sum_{i=1}^{n-1}x_{i}^{2}+\frac{1}{2}(x_{1}+\cdots
+x_{n})^{2}\\
& =\sum_{i=1}^{n-1}x_{i}^{2}+\sum_{1\leq i<j\leq n-1}x_{i}x_{j} \\
& = q(x_1,\dots,x_{n-1}).
\end{align*}
    \item This is a direct consequence of the second point.
\end{enumerate}
 
\end{proof}

\begin{Rem}\label{rem_univ}
We show in Proposition \ref{prop recap} that $Q$ is universal on $\Delta_{n}$ if and only if it is universal on $X_n$. 
In fact, the proof also shows that $Q$ is universal on $\mathcal{Q}_n$ if and only $q$ is universal on $\mathbb{Z}^{n-1}$.
\end{Rem}

\medskip

We recall the Fifteen Theorem and the 290-Theorem for positive definite integral quadratic forms.  
The Fifteen Theorem was established by Conway and Schneeberger and first published by Bhargava, while the 290-Theorem was established by Bhargava and Hanke (see \cite{Bhargava2011UniversalQF}).

\begin{Th}
\ 

\begin{itemize}
\item A positive definite integral quadratic form with integer matrix is
universal if and only if it represents the integers of the set%
$$
S_{15}=\{1,2,3,5,6,7,10,14,15\}.
$$

\item A positive definite integral quadratic form is universal if and only if
it represents the integers of the set
$$
S_{290}
=\{1,2,3,5,6,7,10,13,14,15,17,19,21,22,23,26,29,30,31,34,35,37,42,58,93,110,145,203,290\}.
$$
\end{itemize}
\end{Th}

\medskip

\begin{Prop}
\label{Prop_Quniv}
The quadratic form $q$ defined in (\ref{def_q}) is universal on $\mathbb{Z}
^{n-1}$ for any $n\geq5$.
\end{Prop}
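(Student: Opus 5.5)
Because $q$ is a positive definite integral quadratic form with integer Gram matrix (diagonal entries $1$, off-diagonal entries $\tfrac12$, so $2q$ is even but $q$ itself has half-integral off-diagonal matrix entries), the natural tool is the 290-Theorem. First, positive definiteness follows from Proposition \ref{prop recap}: $q(x_1,\ldots,x_{n-1})=\frac12\sum_{i=1}^n x_i^2$ with $x_n=-(x_1+\cdots+x_{n-1})$, which vanishes only at $0$. The plan is then to reduce to a single small rank. If $x_{n-1}=0$, the form $q$ in $n-1$ variables restricts exactly to the form $q$ in $n-2$ variables. Hence the values represented by $q$ on $\Z^{n-2}$ are contained in those represented on $\Z^{n-1}$. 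It is therefore enough to prove universality for $n=5$, i.e. for the quaternary form
$$q_4(x_1,x_2,x_3,x_4)=\sum_{i=1}^4 x_i^2+\sum_{1\le i<j\le 4}x_ix_j .$$
This is the form associated with the root lattice $A_4$ scaled by $\frac12$.

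By the 290-Theorem, it suffices to exhibit representations by $q_4$ of each of the $29$ integers in $S_{290}$. The main computational step is to organise these representations efficiently rather than search for each one blindly. I would use the description $q_4(x)=\frac12\sum_{i=1}^5 x_i^2$ with $x\in\mathcal{Q}_5$. Representing $m$ then amounts to finding five integers summing to $0$ whose squares sum to $2m$. Some small cases are immediate: $(1,-1,0,0,0)$ gives $1$, $(1,1,-1,-1,0)$ gives $2$, and $(2,-1,-1,0,0)$ gives $3$. For the larger values I would use such decompositions systematically, e.g. patterns of the form $(a,-a,b,-b,0)$, which give $a^2+b^2$, and $(a,b,-a-b,0,0)$, which give $a^2+ab+b^2$. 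Together these cover sums of two squares and norms from $\Z[\omega]$, and the remaining entries of $S_{290}$ (such as $7$, $21$, $42$, $93$, $110$, $203$, $290$) are handled by a direct small search over $5$-tuples.

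The main obstacle is only the bookkeeping for the $29$ values, which is finite and mechanical, possibly assisted by a computer check. A more conceptual alternative would be to show that $q_4$ contains a known universal sublattice, but the explicit verification is the most direct route. One point must be kept straight: the 290-Theorem has to be used rather than the Fifteen Theorem, since the Gram matrix of $q$ is not integral. Once $q_4$ is shown universal, the restriction argument above gives universality of $q$ on $\Z^{n-1}$ for every $n\ge5$.
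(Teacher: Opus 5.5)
Your proposal is correct and matches the paper's proof. Both use the 290-Theorem rather than the Fifteen Theorem, because the matrix of $q$ is only half-integral, and both verify, by a finite check with all variables beyond the first four set to zero, that every element of $S_{290}$ is represented by $q$. Two small slips do not affect the argument: your opening phrase ``with integer Gram matrix'' contradicts what you say afterwards, and some of your ``remaining'' values are already covered by your two patterns, for instance $7=2^2+2\cdot 1+1^2$ and $290=17^2+1^2$.
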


\begin{proof}
Observe that the matrix $(a_{i,j})$ of $q$ satisfies $a_{i,j}=\frac{1}{2}$ if
$i\neq j$ and $a_{i,i}=1$. Therefore $q$ has only an half integer
matrix.\ 
Nevertheless, it is easy to check by computer that all the elements of $S_{290}$
are represented by $q$ by considering only $4$ variables $x_1,x_2,x_3,x_4$ 
(and setting the other ones to be zero) 
and letting them run over $\llbracket-8,8\rrbracket$.
\end{proof}

\begin{Prop}\label{non universality 3 variables}
For $ n = 2,3,4$ the form $q$ is not universal on $\mathbb{Z}^{n-1}$, where we recall (\ref{def_q}) that $q(x_1) = x_1^2$, $q(x_1, x_2) = x_1^2 + x_2^2 + x_1x_2$ and
 $q(x_1,x_2,x_3) = x_1^2 + x_2^2 + x_3^2 + x_1x_2 + x_1x_3 + x_2x_3$. 
\end{Prop}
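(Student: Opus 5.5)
For each of the three values of $n$ I will exhibit an explicit positive integer that $q$ does not represent, and prove this by a congruence or small-value argument.

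For $n=2$, $q(x_1)=x_1^2$ takes only square values, so $2$ is not represented.

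For $n=3$, I will use $4q(x_1,x_2)=(2x_1+x_2)^2+3x_2^2$. Reducing modulo $3$ shows that $4q$ is a square modulo $3$, hence $q\equiv 0$ or $1 \pmod 3$. So $2$ (and every integer $\equiv 2 \bmod 3$) is not represented. Alternatively, $q$ is the norm form of the Eisenstein integers, which never takes the value $2$.

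For $n=4$, the form $q(x_1,x_2,x_3)=x_1^2+x_2^2+x_3^2+x_1x_2+x_1x_3+x_2x_3$ satisfies
\[
2q=x_1^2+x_2^2+x_3^2+(x_1+x_2+x_3)^2 .
\]
So $q(x)=Q(x_1,x_2,x_3,x_4)$ with $x_4=-(x_1+x_2+x_3)$, as in the proof of Proposition~\ref{prop recap}. The task is therefore to find $m$ such that $2m$ is not a sum of four squares of integers with zero sum. Small values are easy to realise, e.g. $1,2,3,4$ come from $(1,-1,0,0)$, $(1,1,-1,-1)$, $(2,-1,-1,0)$, $(2,0,-2,0)$.

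I would first compute $q$ on a small box, say $\llbracket -4,4\rrbracket^3$ (enough since $q\ge \frac12\max_i x_i^2$), to locate the first missing value. Then I would prove it is missing by a $2$-adic argument. Writing $a_i=x_i$ with $\sum a_i=0$, the number of odd $a_i$ is even. If all $a_i$ are even, then $2m=4\sum b_i^2$ with $\sum b_i=0$, which gives the descent $q(x)=4\,q(x/2)$. If exactly two or all four $a_i$ are odd, I would analyse $\sum a_i^2$ modulo $8$ together with the zero-sum constraint. The expected outcome is that the values of the form $4^k(8l+\text{const})$ are excluded, and the first such value should be $m=5$ or similar; I would pin it down from the computation.

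The main obstacle is this last step: finding the right modulus and residue class for the obstruction in the $n=4$ case. If the $2$-adic analysis turns out messy, the fallback is simply to give one explicit small excluded integer $m$. Since $q(x)\ge \frac12\max_i x_i^2$, all representations of $m$ have $|x_i|\le\sqrt{2m}$, so a finite check proves non-representation. This finite verification is acceptable in the same spirit as the computer check in Proposition~\ref{Prop_Quniv}.
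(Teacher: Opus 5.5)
Your arguments for $n=2$ and $n=3$ are correct and match the paper's. For $n=3$, your identity $4q=(2x_1+x_2)^2+3x_2^2$ simply makes the paper's mod~$3$ claim transparent.

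The case $n=4$, however, is left as a plan rather than a proof. You never name an integer that $q$ misses, and your guess is wrong: $5=q(2,-2,1)$. In fact every integer from $1$ to $13$ is represented, and the first missing value is $14$.

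Your intended $2$-adic analysis would also not close at the level you describe. When all four $a_i$ are odd, $\sum a_i^2\equiv 4\pmod 8$ is compatible with $2m=28$. Even modulo $16$ there is no obstruction, since $(3,-1,-1,-1)$ gives $2q=12\equiv 28\pmod{16}$. The paper obtains the obstruction by checking that $q$ modulo $16$ never takes the value $14$, i.e.\ $2q\not\equiv 28\pmod{32}$. Accordingly, the excluded values have the form $4^k(16l+14)$, not $4^k(8l+c)$; for instance $6=q(3,-1,-1)$ is represented although $6\equiv 6\pmod 8$.

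A short way to complete your argument is the identity
\[
2q(x_1,x_2,x_3)=(x_1+x_2)^2+(x_1+x_3)^2+(x_2+x_3)^2 ,
\]
so $2q$ is always a sum of three squares. Suppose $28=y_1^2+y_2^2+y_3^2$. Reducing modulo $4$ forces every $y_i$ to be even, so $7$ would be a sum of three squares, which is impossible modulo $8$. Hence $q$ never takes the value $14$.

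More generally, the substitution $(y_1,y_2,y_3)=(x_1+x_2,x_1+x_3,x_2+x_3)$ is a bijection from $\mathbb{Z}^3$ onto the integer triples with even sum. An even sum of squares forces an even sum. So Legendre's three-square theorem shows the missed values are exactly $4^k(16l+14)$, which recovers the list $M_{128}$ in the paper.

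Your finite-check fallback would also work once aimed at $m=14$. Note, though, that your bound $q\ge\frac12\max_i x_i^2$ only gives $|x_i|\le 5$, so it does not justify the box $\llbracket -4,4\rrbracket^3$. The sharper bound $q\ge\frac23\max_i x_i^2$, from Cauchy--Schwarz with the zero-sum constraint, does give $|x_i|\le 4$.
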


\begin{proof}
    If $n = 2$ then it is obvious. If $n = 3$, then one checks that
$$
x_1^2+x_2^2+x_1x_2 \equiv 0 \text{ or } 1 \mod 3,
$$
for all $x_1,x_2\in \mathbb{Z}$. Therefore $q$ never represents any integer congruent to $2\mod 3$ and is then not universal.

We consider now $n = 4$. By computational experiments on the set $\llbracket -12, 12\rrbracket^3$, we see that all the elements of $S_{290} \setminus \{14,30,110\}$ are represented at least once. But now it is easy to see that 14, 30 and 110 can never be represented by $q$.
\begin{enumerate}
    \item For 14 it is enough to consider all the classes modulo 16. The possible classes are 
    $$
    \{0, 1, 2, 3, 4, 5, 6, 7, 8, 9, 10, 11, 12, 13, 15\}.
    $$
    The class 14 is missing, hence the result.
    \item For 30 it is enough to consider all the classes modulo 32. The missing classes are 
    $$
    M_{32}=\{14,30\}.
    $$
    Hence the result.
    \item For 110 it is enough to consider all the classes modulo 128. The missing classes are 
    $$
    M_{128} = \{14, 30, 46, 56, 62, 78, 94, 110, 120, 126\}.
    $$
    Hence the result. Note that $S_{290} \cap M_{128} = \{14,30, 110\}$, so the others classes are not a problem since they are not to be considered. Note also that modulo 128 we prove immediately the result for the 3 numbers $14,30$ and $110$.
\end{enumerate}

\end{proof}

From \Cref{prop recap}, the universality of the
atomic length $\sL$ can be regarded as a refinement of that of the
quadratic form $Q$ on $\mathcal{Q}_{n}$ (established in \Cref{Prop_Quniv}),
by restricting it to $\Delta_n$.
We therefore ask the question:
\begin{equation}\label{qu_univ}
\text{Is the form $Q$ universal on $\Delta_n$?}
\end{equation}
We will prove in \Cref{Th_EntropyA} that $Q$ is universal on $\Delta_n$ if and only if $n\geq 5$.

\begin{Rem}
Although $Q$ is a symmetric polynomial in $x_{1},\ldots,x_{n}$, the set
$\Delta_{n}$ is no longer invariant under the action of the symmetric
group, making the study of $Q$ on $\Delta_{n}$ much more involved.
\end{Rem}

\begin{Exa}
Let us examinate which intergers are represented by $Q$ on $\Delta_{n}$ for $n=2,3$.

\begin{enumerate}
\item Assume $n=2$, we have to study the integers represented by the quadradic
form $Q(x_{1},x_{2})$ on
\[
\Delta_{2}=\{(x_{1},x_{2})\in\mathbb{Z}^{2}\mid x_{1}+x_{2}=0,x_{1}+1\neq
x_{2}+2\mod 2\}.
\]
since $x_{2}=-x_{1}$ in the elements of $\Delta_{2}$, we always have
$x_{1}+1\neq x_{2}+2\mod 2$ and we are reduced to the integers
represented by the form
\[
q(x_{1})=Q(x_{1},-x_{2})=x_{1}^{2}.
\]
This means, we only obtain the squares.

\item Assume $n=3$, then
\[
\Delta_{3}=\{(x_{1},x_{2}\in\mathbb{Z}^{2}\mid x_{1}+x_{2}+x_{3}
=0,\{x_{1}+1,x_{2}+2,x_{3}+3\}\mod 3=\mathbb{Z}/3\mathbb{Z}\}.
\]
By elementary considerations, one shows that we get the integers of
the form
\[
q(x_{1},x_{2})=Q(x_{1},x_{2},-x_{1}-x_{2})=x_{1}^{2}+x_{2}^{2}+x_{1}x_{2}
\]
where the projection of $(x_{1},x_{2})$ in $(\mathbb{Z}/3\mathbb{Z})^{2}$
belongs to the set $\{(\overline{0},\overline{0}),(\overline{0},\overline
{1}),(\overline{1},\overline{1}),(\overline{1},\overline{2}),(\overline
{2},\overline{0}),(\overline{2},\overline{2})\}$.
\end{enumerate}
\end{Exa}

\bigskip

\section{An additive combinatorics problem and its consequences}
\label{sec_univA}

The goal of this section is to connect the universality problem posed in \Cref{qu_univ}
with an elementary question in additive combinatorics.
We will use a result by Hall to conclude.

\subsection{Background on Hall's theorem}

Let $\Gamma$ be a finite abelian group of order $n$. We use additive notation; in particular, the identity element of $\Gamma$ is denoted by $0$ and the inverse of $g$ in $\Gamma$ is denoted by $-g$. We denote the elements of $\Gamma$ by
$$
\Gamma = \{a_1, a_2,\dots, a_n\}.
$$

Given a finite set $A$ of $\Gamma$, recall that the sumset set $A+A$ is defined by
$$
A+A=\{a+a^{\prime}\mid(a,a^{\prime})\in \Gamma^{2}\}
$$
and the difference set $A-A$ by
$$
A-A=\{a-a^{\prime}\mid(a,a^{\prime})\in \Gamma^{2}\}.
$$

\medskip

In 1952, Hall proved the following theorem, which we shall use later (proof of Theorem \ref{Th_EntropyA}).

\begin{Th}[\cite{hall1952combinatorial}]\label{main th}
	Let $d_1, d_2,\dots, d_n \in \Gamma$ (not necessarily all distinct). The following two statements are equivalent:
	\begin{enumerate}
		\item There exists a permutation of $\Gamma$
		$$
		\sigma = 
		\begin{pmatrix}
		a_1 & a_2 & \dots & a_{n} \\
		b_1 & b_2 & \dots & b_{n}
		\end{pmatrix}
		$$
		such that ~
		$
		b_i - a_i = d_i ~ \text{for all} ~ i =1,2,\dots, n. 
		$
		The vector $\mathsf{d} = (d_1,d_2,\dots,d_n)$ is called the $D$-vector of $\sigma$.
		\item $\sum_{i=1}^n d_i = 0$.
	\end{enumerate}
\end{Th}

\medskip

\begin{Exa}
	Take $\Gamma = \mathbb{Z}/4\mathbb{Z}$ and $\sigma : \Gamma \rightarrow  \Gamma$ defined by
	$$
	\sigma = 
	\begin{pmatrix}
	\overline{0} & \overline{1} & \overline{2} & \overline{3} \\
	\overline{3}  & \overline{1}  &  \overline{0} &  \overline{2}
	\end{pmatrix}.
	$$
	We have then $d_1 =  \overline{3}$, $d_2 =  \overline{0}$, $d_3 =  \overline{2}$ and $d_4 =  \overline{3}$. Summing up all the $d_i$'s we indeed obtain $\overline{8}= \overline{0}$, illustrating the direction $(1) \implies (2)$ of Theorem \ref{main th}. The reverse direction is much trickier: constructing $\sigma$ in terms of the $d_i$'s is not obvious, and it is actually an interesting question, provided $\mathsf{d} = (d_1,d_2,d_3,d_4) \in \Gamma^4$, to find all the permutations of $\Gamma$ such that their $D$-vector belongs to $\mathfrak{S}_4\cdot \mathsf{d}$.
\end{Exa}

\subsection{Proof of the universality of entropy on affine permutations }

In the rest of this section, we will work with
$$
G=(\mathbb{Z}/n\mathbb{Z})^{n},
$$
whose neutral element is
$$
\mathsf{0} =(\overline{0},\ldots,\overline{0}),
$$
and with its subgroup $H_n$ defined by
$$
H_n=\Bigl\{(h_{1},\ldots,h_{n})\in G \ \Bigm|\  h_{1}+\cdots+h_{n}
=\mathsf{0}\Bigr\}.
$$

We will also consider the subset
$$
\mathcal{A}_n =\Bigl\{(a_{1},\ldots,a_{n})\in G \ \Bigm|\  a_i\neq a_j,\ \forall\,1\le i<j\le n
\,\Bigr\}\subset G.
$$

One can observe here that $\mathcal{A}_n$ is just the image in $G$ of the permutations
of the vector $(1,2,\ldots,n)\in\mathbb{Z}^{n}$. It is clear that the
map that associates with any permutation $w \in \mathfrak{S}_{n}$ the element
$(\overline{w(1)},\ldots,\overline{w(n)})\in G$ is a bijection from the
symmetric group $\mathfrak{S}_{n}$ to the set $\mathcal{A}_n$. In particular $\left\vert
\mathcal{A}_n\right\vert =n!$. Also, it is easy to see that for any $a\in \mathcal{A}_n$, we have
$$
a_{1}+\cdots+a_{n}=\overline{\frac{n(n+1)}{2}}.
$$

Another interesting property of the set $\mathcal{A}_n$ is its invariance under
sign change: we have $\mathcal{A}_n=-\mathcal{A}_n$. It follows that
\begin{equation}\label{invariance changement de signe}
\mathcal{A}_n+\mathcal{A}_n=\mathcal{A}_n-\mathcal{A}_n.
\end{equation}

We can now completely solve Question \Cref{qu_univ}:
the $\rho$-atomic length in affine type $A_{n-1}^{(1)}$ is universal exactly when $n\geq5$.

\medskip

\begin{Th}
\label{Th_EntropyA}
Fix $n\geq2$ an integer. 
\begin{enumerate}
    \item\label{point1} We have the sumset equality
        \begin{equation}
        \mathcal{A}_n+\mathcal{A}_n=\mathcal{A}_n-\mathcal{A}_n=H_n.\label{Sumset}
        \end{equation}
     \item\label{point2}
     The following equivalent statements hold:
     \begin{enumerate}
     \item \label{P1} The quadratic form $Q$ is universal on $\Delta_{n}$ if and only if $n\geq5$, 
    \item The entropy on affine permutations is universal if and only if $n\geq5$,
    \item The atomic length $\sL$ is universal on $W$ if and only if $n\geq5$.
     \end{enumerate}
\end{enumerate}
\end{Th}

\begin{proof}\
\begin{enumerate}
\item For every $(a_1,\dots, a_n)\in \mathcal{A}_n$, we have
	$$
	a_{1}+\cdots+a_{n}=\overline{\frac{n(n+1)}{2}}.
	$$
	Since 
	$$\overline{\frac{n(n+1)}{2}}+\overline{\frac{n(n+1)}{2}}=\overline{0},$$ 
	we deduce the first inclusion $\mathcal{A}_n+\mathcal{A}_n\subset H_n$.
	For the reverse inclusion we will show that $H_n \subset \mathcal{A}_n - \mathcal{A}_n$, which, by (\ref{invariance changement de signe}), will prove the statement. Let $h = (h_1,\dots, h_n) \in H_n$, which therefore implies $\sum_{i = 1}^n h_i = \overline{0}$. Let $\alpha_1, \dots, \alpha_n$ denote the elements of $\mathbb{Z}/n\mathbb{Z}$. By Hall's theorem (Theorem \ref{main th}) applied to $\mathbb{Z}/n\mathbb{Z}$, there exists a permutation $\sigma : \mathbb{Z}/n\mathbb{Z} \rightarrow \mathbb{Z}/n\mathbb{Z}$ 
	$$
	\sigma = 
	\begin{pmatrix}
	\alpha_1 & \alpha_2 & \dots & \alpha_{n} \\
	\beta_1 &  \beta_2 & \dots &  \beta_{n}
	\end{pmatrix}
	$$
	such that for every $1 \leq i \leq n$ we have $\beta_i - \alpha_i = h_i$. But clearly $\alpha =(\alpha_1, \dots, \alpha_n)$ and $\beta = (\beta_1, \dots, \beta_n)$ are two elements of $\mathcal{A}_n$. Hence
	$	h = \beta - \alpha.	$
\item The fact that the three assertions are equivalent follows from \Cref{prop recap} and \Cref{def_entropy}.
Let us therefore prove only (a).
Assume that $n\geq 5$. We proceed in two steps.
    \begin{itemize}
    \item The quadratic form $Q$ is universal on $\Delta_{n}$ if and only if,
for each integer $k$ there exists $x=(x_{1},\ldots,x_{n})\in\mathcal{Q}_{n}$
such that
\begin{equation}
\overline{x_{i}+i}\neq\overline{x_{i}+j}\text{ for any }1\leq i<j\leq
n\label{C1}
\end{equation}
and
\begin{equation}
k=\frac{1}{2}\sum_{i=1}^{n}x_{i}^{2}=\frac{1}{2}\left\Vert x\right\Vert
^{2}.\label{C2}
\end{equation}
Write for short $a_{0}=(1,2,\ldots,n)\in\mathbb{Z}^{n}$. Then Condition
(\ref{C1}) is equivalent to
\[
\overline{x}+\overline{a}_{0}\in \mathcal{A}_n\text{.}
\]
Recall that the symmetric group ${\mathring{W}}$ acts on $\mathcal{Q}_{n}$ and
on $H_n$ by permuting the coordinates.\ Also for any $x\in\mathcal{Q}_{n}$ and
any $w\in{\mathring{W}}$, we have $\left\Vert w\cdot x\right\Vert ^{2}
=\left\Vert x\right\Vert ^{2}$. Now consider an
integer $k\in\mathbb{N}$. Since $Q$ is universal on $\mathcal{Q}_{n}$ by
Proposition \ref{Prop_Quniv}, there exists $x\in\mathcal{Q}_{n}$ such that
$k=Q(x)$. For such a $x$ in $\mathcal{Q}_{n}$, we have $\overline{x}\in H_n$.
Also $\mathcal{A}_n$ coincides with the orbit of $\overline{a}_{0}$ under the action of
${\mathring{W}}$. By Point \ref{point1} we know that $H_n=\mathcal{A}_n-\mathcal{A}_n$, there thus exist $w_{k}
,w_{k}^{\prime}$ in ${\mathring{W}}$ such that $\overline{x}=w_{k}^{\prime}
\cdot\overline{a}_{0}-w_{k}^{-1}\cdot\overline{a}_{0}$.

\item Since we have $k=Q(x)$, we get $k=Q(u\cdot x)$ for any
$u\in{\mathring{W}}$ and in particular $k=Q(w_{k}\cdot x)$ for the element
$w_{k}$ of step 1.\ Therefore, to obtain the universality of $Q$ on
$\Delta_{n}$, it suffices to show that
\[
\overline{w_{k}\cdot x}+\overline{a}_{0}=w_{k}\cdot\overline{x}+\overline
{a}_{0}\in \mathcal{A}_n.
\]
This means that, thanks to the action of ${\mathring{W}}$ on $\mathcal{Q}_{n}$,
one has to check that one can always replace the vector $x$ by the vector
$w_{k}\cdot x$ in its orbit under the action of ${\mathring{W}}$ so that
Condition (\ref{C1}) becomes satisfied. Observe that $\mathcal{A}_n$ is stable under the
action of ${\mathring{W}}$ on $H_n$.\ Therefore, we get the equivalence
\[
w_{k}\cdot\overline{x}+\overline{a}_{0}\in \mathcal{A}_n\Longleftrightarrow\overline
{x}+w_{k}^{-1}\cdot\overline{a}_{0}\in \mathcal{A}_n.
\]
Also $\mathcal{A}_n$ coincides with the orbit of $\overline{a}_{0}$ under the action of
${\mathring{W}}$ so that $\overline{x}+w_{k}^{-1}\cdot\overline{a}_{0}\in \mathcal{A}_n$ if
any only if there exists $w^{\prime}\in{\mathring{W}}$ such that $\overline
{x}=w^{\prime}\cdot\overline{a}_{0}-w_{k}^{-1}\cdot\overline{a}_{0}$ which is
guaranteed by our hypothesis $H_n=\mathcal{A}_n-\mathcal{A}_n$ by choosing $w^{\prime}=w_{k}$ as explained
in \ref{point1}. In conclusion for any $n\geq5$, the sumset equality $H=\mathcal{A}_n-\mathcal{A}_n$ implies that our quadratic
form $Q$ is universal on $\Delta_{n}$.
\end{itemize} 
Conversely, assume $n\leq 4$.
By Proposition \ref{Prop_Quniv} and Proposition \ref{non universality 3 variables}, 
some integers are not represented by $q$ on $\mathbb{Z}^{n-1}$, so on $X_n$ neither (since $X_n \subset \mathbb{Z}^{n-1}$). 
Therefore, by Proposition \ref{prop recap}, $Q$ is not universal on $\Delta_n$.
\end{enumerate}

\end{proof}

\section{\texorpdfstring{A polynomial expression of the atomic length in affine type $A$}{A polynomial expression of the atomic length in affine type A}}
\label{sec_pol_formulas}

In this section, we use a result by the third author \cite{Jacon:size} to establish a
polynomial expression for the atomic length in type $A_{n-1}^{(1)}$ associated
to any dominant weight $\La$. 
Here again, this computes
the size of relevant generalisations of $n$-cores.
In fact, it is a direct higher level analogue of the Garvan-Kim-Stanton formula \Cref{GKS_pol} giving the size of
an $n$-core. 
We conjecture that in this general setting, the atomic length is also universal 
when a simple condition on the weight $\La$ is satisfied. 

\subsection{Symbols and abaci}

A {\it partition} $\lambda=(\lambda_1,\ldots,\lambda_r)$ of size $k\in \mathbb{N}$ is a sequence of non increasing integers of total sum $|\lambda|:=k$.  
If $\ell\in \mathbb{N}$, an {\it $\ell$-partition} (or multipartition) $\boldsymbol{\lambda}$ is a sequence of  partitions $(\lambda^1,\ldots,\lambda^{\ell})$ 
such that 
$|\boldsymbol{\lambda}|:=\sum_{1\leq i\leq \ell} |\lambda^i|=k$. 
The integer  $k$ is also called the size of the $\ell$-partition $\boldsymbol{\lambda}$. 
 The set of $\ell$-partitions of size $k$ is denoted by $\Pi^\ell (k)$ and the set of $\ell$-partitions by $\Pi^\ell$.

   \medskip
   
  By definition,  a  {\it symbol}  of {\it charge}  $s\in \mathbb{Z}$  is an infinite sequence of integers 
 $X=(\beta_i)_{i<s}$ such that:
 \begin{enumerate}
 \item For all $i< s$, we have $\beta_{i-1}<\beta_i$ (that is, $X$ is a strictly increasing sequence),
 \item There exists $N< s$ such that for all $j\leq N$, we have $\beta_j=j$.
 \end{enumerate}
The symbol  $X=(\beta_i)_{i<s}$ such that $\beta_i=i$ for all $i<s$ is called the {\it trivial symbol}.
A symbol may be conveniently represented using its abacus configuration.  We associate to a symbol $X$
a horizontal runner of black and white beads indexed by $\mathbb{Z}$,
where a bead indexed by $a\in \mathbb{Z}$  
is black if and only if $a\in X_j$ (we will say that the {\it position} of the bead is $a$). 
\begin{Exa}
The abacus associated to the symbol $X=(\ldots,-3,-2,-1,2, 3,5,7)$  of charge $4$ is
        \begin{center}
\begin{tikzpicture}[
  scale=0.5,
  bb/.style={draw,circle,fill,minimum size=2.5mm,inner sep=0pt,outer sep=0pt},
  wb/.style={draw,circle,fill=white,minimum size=2.5mm,inner sep=0pt,outer sep=0pt}
]

\foreach \x in {-3,...,18}
  \node at (\x,-1) {\x};

\foreach \x in {-3,...,18}
  \node[wb] at (\x,0) {};

\foreach \x in {-3,-2,-1,2,3,5,7}
  \node[bb] at (\x,0) {};

\node at (-4,0) {$\ldots$};
\node at (19,0) {$\ldots$};

\end{tikzpicture}
\end{center}

\end{Exa}
The charge of a symbol $X$
can be conveniently read off
from its associated abacus as follows. 
For each black bead, replace recursively the leftmost white bead to its left (if it exists) with a black bead, and replace the black bead itself with a white bead. 
This way, we obtain the abacus of a trivial symbol. The charge of 
$X$ is then defined as the charge of this trivial symbol, that is, it is the index of the leftmost white bead.

\medskip

This generalises to $\ell$-partitions.
An {\it $\ell$-symbol} is a collection of $\ell$  symbols
 $${\bf X}=(X^1,\ldots,X^{\ell}).$$
The {\it multicharge} (or $\ell$-charge) of the symbol is the $\ell$-tuple $(s_1,\ldots,s_{\ell})\in \mathbb{Z}^\ell$ where for all $j=1,\ldots,\ell$, the number $s_j$ is the charge of $X_j =(\beta^j_i)_{i< s_j}$. 
 An {\it $\ell$-symbol} ${\bf X}=(X^1,\ldots,X^{\ell})$ can be  represented using its abacus configuration. 
 In this way, we associate to each $X^j$, for $j=1,\ldots,\ell$, an abacus as above and we write them from bottom to top, aligned so that the beads in the same column are indexed by the same integer.   
 We call the associated object an {\it $\ell$-abacus}.
  \begin{Exa}\label{firstabacus}
  Let $\ell=3$ and let us consider the following $3$-symbol:
  $${\bf X}=((\ldots,-1,0,2,4,6),(\ldots,-1,0,3,4),(\ldots,-1,0,2,5)).$$
  The associated $3$-abacus is:
\begin{center}
\begin{tikzpicture}[
  scale=0.5,
  bb/.style={draw,circle,fill,minimum size=2.5mm,inner sep=0pt,outer sep=0pt},
  wb/.style={draw,circle,fill=white,minimum size=2.5mm,inner sep=0pt,outer sep=0pt}
]

\foreach \x in {-1,...,20}
  \node at (\x,-1) {\x};

\foreach \x in {-1,...,20} \node[wb] at (\x,0) {};
\foreach \x in {6,4,2,0,-1} \node[bb] at (\x,0) {};
\node at (-2,0) {$\ldots$};
\node at (21,0) {$\ldots$};

\foreach \x in {-1,...,20} \node[wb] at (\x,1) {};
\foreach \x in {4,3,0,-1} \node[bb] at (\x,1) {};
\node at (-2,1) {$\ldots$};
\node at (21,1) {$\ldots$};

\foreach \x in {-1,...,20} \node[wb] at (\x,2) {};
\foreach \x in {5,2,0,-1} \node[bb] at (\x,2) {};
\node at (-2,2) {$\ldots$};
\node at (21,2) {$\ldots$};

\end{tikzpicture}
\end{center}
\end{Exa}

To each  symbol $X =(\beta_i)_{i<s}$  (and thus to each abacus)   of charge $s$ we can canonically associate 
a partition $\lambda (X)=(\lambda_1,\ldots,\lambda_r)$ such that for all $i\geq 1$, we have $\lambda_i=\beta_{s-i}+i-s$. Note that if  $k$ is large enough then  $\lambda_k=0$. 
Regarding the abacus associated to the set of $\beta$-numbers, the parts of the partition are easily obtained by counting the numbers of white beads at the left of each black bead. 
Conversely, to any partition $(\lambda_1,\ldots,\lambda_r)$, we can associate a set of $\beta$-numbers  (and thus an abacus). Let  $s\in \mathbb{Z}$.  Then we define:  
$$X^s (\lambda) =(\beta_i)_{i< s}$$
where  for all $i=1,\ldots,m$, we have $\beta_{m-i}=\lambda_i-i+m$.  

\begin{Rem}
    On can easily check whether a charged partition $\lambda$ is a $n$-core by looking at its associated abacus. We indeed have the equivalence: $\lambda$ is a $n$-core if and only if for each black bead at position $k$ in its abacus, we have another black bead at position $k-n$.
\end{Rem}    

By extension, to each $\ell$-symbol ${\bf X}=(X^1,\ldots,X^{\ell})$, we can associate an $\ell$-partition.
Conversely, to each  multipartition  $\boldsymbol{\lambda}^\ell=(\lambda^1,\ldots,\lambda^{\ell})$ and multicharge ${\bf s}^\ell=(s_1,\ldots,s_{\ell})$, one can attach an $\ell$-symbol 
$${\bf X}^{{\bf s}^\ell} (\boldsymbol{\lambda}^l)=(X^{s_1} (\lambda^1),\ldots,X^{s_{\ell}} (\lambda^{\ell})).$$ 
 \begin{Exa}
In Example \ref{firstabacus}, we find ${\bf X}={\bf X}^{(4,3,3)} ((3,2,1),(2,2),(3,1))$. 
 \end{Exa}

\subsection{\texorpdfstring{The notion of $(n,\boldsymbol{s})$-core}{The notion of (n,s)-core}}\label{subsec_ns_core}

Let $s\in\Z$, and denote $\mathbb{Z}^{\ell}[s] := \{(s_1,\ldots,s_{\ell}) \in \mathbb{Z}^{\ell}\mid s_1+\cdots+s_{\ell}=s\}$.
Fix ${\bf s}^{\ell}\in\Z^\ell[s]$ and $\boldsymbol{\lambda}^{\ell} \in \Pi^{\ell}$. We associate to this datum an
$n$-partition $\boldsymbol{\lambda}_n$ together with a multicharge
${\bf s}_n\in \mathbb{Z}^n[s]$ as follows.
Start with the abacus associated to
$(\boldsymbol{\lambda}^{\ell},{\bf s}^{\ell})$. Then:
\begin{itemize}
\item 
For each $k\in\Z$, Consider the rectangle $R_k$ containing all beads indexed by $x$ such that $x-(x \mod e) = k$.
\item Rotate each rectangle $R_k$ $90$ degrees anticlockwise.
\item We get an $n$-abacus, which is the $n$-abacus of
$(\bmu,{\bf t})$ where ${\bmu}=(\mu^1,\ldots,\mu^n)$ and
${\bf t}=(t^1,\ldots,t^n)$.
\item We then define $\boldsymbol{\lambda}_n:=(\mu^n,\ldots,\mu^1)$ and
${\bf s}_n:=(t^n,\ldots,t^1)$.
\end{itemize}

\begin{Exa}\label{exz}
Assume that $\ell=2$ and $n=3$, $\boldsymbol{\lambda}^{\ell}=((3,1),(2,1))$ and
${\bf s}^{\ell}=(0,0)\in \mathbb{Z}^2[0]$. The $\ell$-abacus associated with
$(\boldsymbol{\lambda}^{\ell},{\bf s}^{\ell})$:
  \begin{center}
    \begin{tikzpicture}[
  scale=0.5,
  bb/.style={draw,circle,fill,minimum size=2.5mm,inner sep=0pt,outer sep=0pt},
  wb/.style={draw,circle,fill=white,minimum size=2.5mm,inner sep=0pt,outer sep=0pt}
]

\foreach \x/\lab in {-9/-10,-8/-9,-7/-8,-6/-7,-5/-6,-4/-5,-3/-4,-2/-3,-1/-2,0/-1,1/0,2/1,3/2,4/3,5/4,6/5,7/6,8/7,9/8,10/9,11/10}
  \node at (\x,-1) {\lab};
\node at (-10,-1) {$\ldots$};

\foreach \x in {-9,...,11} \node[wb] at (\x,0) {};
\foreach \x in {3,0,-2,-3,-4,-5,-6,-7,-8,-9} \node[bb] at (\x,0) {};
\node at (-10,0) {$\ldots$};

\foreach \x in {-9,...,11} \node[wb] at (\x,1) {};
\foreach \x in {2,0,-2,-3,-4,-5,-6,-7,-8,-9} \node[bb] at (\x,1) {};
\node at (-10,1) {$\ldots$};

\foreach \x in {0.5,3.5,-2.5,-5.5,6.5,9.5,-8.5}
  \draw[dashed] (\x,-0.5) -- (\x,1.5);

\end{tikzpicture}
\end{center}
and then after rotation, the $n$-abacus associated to $(\boldsymbol{\lambda}_n,{\bf s}_n)$
 \begin{center}
    \begin{tikzpicture}[
  scale=0.5,
  bb/.style={draw,circle,fill,minimum size=2.5mm,inner sep=0pt,outer sep=0pt},
  wb/.style={draw,circle,fill=white,minimum size=2.5mm,inner sep=0pt,outer sep=0pt}
]

\foreach \x/\lab in {-9/-10,-8/-9,-7/-8,-6/-7,-5/-6,-4/-5,-3/-4,-2/-3,-1/-2,0/-1,1/0,2/1,3/2,4/3,5/4,6/5,7/6,8/7,9/8,10/9,11/10}
  \node at (\x,-1) {\lab};
\node at (-10,-1) {$\ldots$};

\foreach \x in {-9,...,11} \node[wb] at (\x,0) {};
\foreach \x in {0,-1,-2,-3,-4,-5,-6,-7,-8,-9} \node[bb] at (\x,0) {};
\node at (-10,0) {$\ldots$};

\foreach \x in {-9,...,11} \node[wb] at (\x,1) {};
\foreach \x in {1,-2,-3,-4,-5,-6,-7,-8,-9} \node[bb] at (\x,1) {};
\node at (-10,1) {$\ldots$};

\foreach \x in {-9,...,11} \node[wb] at (\x,2) {};
\foreach \x in {2,0,-1,-2,-3,-4,-5,-6,-7,-8,-9} \node[bb] at (\x,2) {};
\node at (-10,2) {$\ldots$};

\foreach \x in {-7.5,-5.5,-3.5,-1.5,0.5,2.5,4.5,6.5,8.5}
  \draw[dashed] (\x,-0.5) -- (\x,2.5);

\end{tikzpicture}
\end{center}
So we have $\boldsymbol{\lambda}_n=((1),(2),\emptyset)$ and ${\bf s}_n=(1,-1,0)$. 

\end{Exa}

  It is clear that here we have a bijection 
  $$ \begin{array}{cccc}
 \varphi:  &  \Pi^\ell \times \mathbb{Z}^\ell [s] & \to &   \Pi^n \times \mathbb{Z}^n [s]  \\
 & (\boldsymbol{\lambda}^\ell,{\bf s}^\ell) & \mapsto &  (\boldsymbol{\lambda}_n,{\bf s}_n) 
\end{array}  $$

\begin{itemize}
\item  The $n$-partition $\boldsymbol{\lambda}_n$ is the {\it $n$-quotient} of $(\boldsymbol{\lambda}^\ell,{\bf s}^\ell)$. 
\item   The pair $(\bnu^\ell,{{\bf s}'}^\ell)\in \Pi^\ell \times \mathbb{Z}^\ell [s]$ such that $\varphi (\bnu^\ell,{{\bf s}'}^\ell)=(\boldsymbol{\emptyset},{\bf s}_n)$
 is called   the {\it $n$-core} of $(\boldsymbol{\lambda},{\bf s}^\ell)$.    It is uniquely determined by ${\bf s}_n$ which is called the {\it $n$-core multicharge} of $(\boldsymbol{\lambda}^\ell,{\bf s}^\ell)$ (in the following, we will often say that the $n$-core $(\boldsymbol{\lambda}^\ell,{\bf s}^\ell)$  is associated to ${\bf s}_n$).
  \end{itemize}

\begin{Def}
A pair $(\bnu^\ell,{{\bf s}}^\ell)\in \Pi^\ell \in \mathbb{Z}^\ell [s] $ is an 
$(n,\boldsymbol{s}^\ell)$-core if  $\varphi (\bnu^\ell,{{\bf s}}^\ell)=(\boldsymbol{\emptyset},{\bf s}_n)$ for an $n$-tuple ${\bf s}_n \in \mathbb{Z}^n [s]$. 
\end{Def}

One can show that if $(\bnu^\ell,{{\bf s}}^\ell)\in \Pi^\ell \in \mathbb{Z}^\ell [s] $ is an $(n,\boldsymbol{s}^\ell)$-core, then we must have 
$$s_1\leq s_2 \leq \ldots \leq s_\ell < s_1+n.$$
 One can easily check whether a  pair $(\bnu^\ell,{{\bf s}}^\ell)\in \Pi^\ell \in \mathbb{Z}^\ell [s] $ is an $(n,\boldsymbol{s}^\ell)$-core looking at its $\ell$-abacus. In this $\ell$-abacus:
 \begin{itemize}
 \item For the $\ell-1$ first runners (starting from the bottom, that is the abaci associated to $X^1,\ldots,X^{l-1}$), for each black bead in position $i$, there must be another black bead at the top of it (that is in the abacus just above)  in the same position 
 \item For the top runner (that is the abacus associated to $X^{\ell}$), for each black bead in position $i$, there must be another black bead in the first (bottom) runner at position $i-e$. 
\end{itemize}

\begin{Rem}
If $\ell=1$, the $n$-quotient defined above agrees with the usual notion of quotient. 
Moreover, the $n$-core of a pair $(\lambda,s)$ 
is always of the form $(\nu,s)$ where $\nu$ is  the usual $n$-core partition of $\lambda$. 
\end{Rem}

\begin{Exa}
Resuming the previous example the $n$-core of $(\boldsymbol{\lambda}^\ell,{\bf s}^\ell)$ have the following $\ell$-abacus:
 \begin{center}\begin{tikzpicture}[
  scale=0.5,
  bb/.style={draw,circle,fill,minimum size=2.5mm,inner sep=0pt,outer sep=0pt},
  wb/.style={draw,circle,fill=white,minimum size=2.5mm,inner sep=0pt,outer sep=0pt}
]

\foreach \x/\lab in {-9/-10,-8/-9,-7/-8,-6/-7,-5/-6,-4/-5,-3/-4,-2/-3,-1/-2,0/-1,1/0,2/1,3/2,4/3,5/4,6/5,7/6,8/7,9/8,10/9,11/10}
  \node at (\x,-1) {\lab};
\node at (-10,-1) {$\ldots$};

\foreach \x in {-9,...,11} \node[wb] at (\x,0) {};
\foreach \x in {0,-2,-3,-4,-5,-6,-7,-8,-9} \node[bb] at (\x,0) {};
\node at (-10,0) {$\ldots$};

\foreach \x in {-9,...,11} \node[wb] at (\x,1) {};
\foreach \x in {3,0,-1,-2,-3,-4,-5,-6,-7,-8,-9} \node[bb] at (\x,1) {};
\node at (-10,1) {$\ldots$};

\foreach \x in {0.5,3.5,-2.5,-5.5,6.5,9.5,-8.5}
  \draw[dashed] (\x,-0.5) -- (\x,1.5);

\end{tikzpicture}
\end{center}
This $n$-core is $(((1),(2)),(-1,1))$ and the associated $n$-core multicharge is $(0,-1,1)$. 
\end{Exa}

The study of $(n,\bs)$-cores is motivated by the following theorem, 
proved in \cite[Corollary 4.4]{JaconLecouvey2021},
which is an analogue of the Nakayama conjecture for Hecke algebras.
More precisely, let $\cH_{(n,\bs)}$ be the non-semisimple Ariki-Koike algebra
specialised at $(\zeta,\zeta^{s_0},\ldots, \zeta^{s_{\ell-1}})$
where $\zeta$ is a primitive $n$-th root of unity and $\bs=(s_0,\ldots, s_{\ell-1})$.

\begin{Th}\label{Th_blocks}
Two $\ell$-partitions of the same rank are in the same block of $\cH_{(n,\bs)}$
if and only if they have the same $(n,\bs)$-core.
\end{Th}

In particular, the simplest blocks, \textit{defect zero} blocks,
contain exactly one $\ell$-partition which is an $(n,\bs)$-core.

\subsection{\texorpdfstring{Atomic length of a general dominant weight in affine type $A$}{Atomic length of a general dominant weight in affine type A}} \label{Subsec_AtomLengthGeneralDom}

The $(n,\bs)$-cores are higher level generalisations of core partitions 
are the relevant objects for parametrising the orbits of
the dominant weights $\La$ of the affine root systems in type $A$ \cite{JaconLecouvey2021}.\ 
In fact, under this parametrisation,
the size of an $(n,\bs)$-core (i.e., the number of boxes) coincides with the $\La$-atomic length of the corresponding affine Weyl group element. 
In \cite{Jacon:size}, explicit formulas for the size of the $(n,\boldsymbol{s})$-cores are established.

\medskip

Let us give more details about this construction. Consider an integer $\ell$ such
that $1\leq \ell\leq n$ and a $\ell$-tuple $\boldsymbol{s}=(s_{1},\ldots,s_{\ell}
)$\footnote{To simplify the notation, we will drop in the rest of this section
the super and lower scripts for the multicharges in $\mathbb{Z}^{\ell}$ and
$\mathbb{Z}^{n}$. That is, we will simply write $\boldsymbol{s}$ and
$\boldsymbol{t}$ instead of $\boldsymbol{s}^{\ell}\in\mathbb{Z}^{\ell}$ and
$\boldsymbol{t}_{n}\in\mathbb{Z}^{n}$ when there is no risk of confusion.} of
integers such that $0\leq s_{1}\leq s_{1}\leq\cdots\leq s_{\ell}<n$ and
$s_{1}+\cdots+ s_{\ell}=s$.\ This defines in particular a partition $\kappa
=(s_{\ell}\geq\cdots\geq s_{1})$ with $s$ boxes contained in the rectangle
$(n-1)^{\ell}$ and a dominant weight of type $A_{n-1}^{(1)}$
\[
\Lambda(\boldsymbol{s})=\Lambda_{s_{1}}+\cdots+\Lambda_{s_{\ell}}.
\]
Also the transposed partition $\kappa^{\prime}$ has $n$ parts and can be
written $\kappa^{\prime}=(s_{n}^{\prime}\geq\cdots\geq s_{1}^{\prime})$ with
parts in $\{0,\ldots,\ell\}$. Then the orbit of the dominant weight $\Lambda
($$\boldsymbol{s}$$)$ under the action of $W$ is parametrized by the
$n$-tuples $\boldsymbol{t}=(t_{1},\ldots,t_{n})$ in $\mathbb{Z}^{n}$ summing
up $s$ and whose distribution modulo $n$ coincides with $\kappa^{\prime}$ or
equivalently with $\boldsymbol{s}^{\prime}=(s_{1}^{\prime}\leq\cdots\leq
s_{n}^{\prime})$. This is in particular a subset of $\mathbb{Z}^{n}[s]$ as
defined in \Cref{subsec_ns_core}. This means that the residues modulo
$\ell$ of a relevant $n$-tuple $\boldsymbol{t}$ are the same than those of
$\boldsymbol{s}^{\prime}$ up to permutation and $t_{1}+\cdots+t_{n}=s$.\ 
\medskip

In fact the bijection between the elements of the orbit $\mathcal{O}
(\Lambda(\boldsymbol{s}))$ and the $n$-tuples $\boldsymbol{t}$ is easy to
describe. The dominant weight $\Lambda(\boldsymbol{s})$ corresponds to the
$n$-tuple $\boldsymbol{s}^{\prime}$ and for any $w$ in $W$, the $n$-tuple
$\boldsymbol{t}$ associated to $w\cdot\Lambda(\boldsymbol{s})$ is obtained by
computing the action of $w$ on $\boldsymbol{s}^{\prime}$ where $W$ acts on
$\mathbb{Z}^{n}$ by
\begin{gather*}
\mathtt{s}_{i}(t_{1},\ldots,t_{i},t_{i+1},\ldots,t_{n})=(t_{1},\ldots
,t_{i+1},t_{i},\ldots,t_{n})\text{ for }i=1,\ldots n\text{ and}\\
\mathtt{s}_{0}(t_{1},t_{2}\ldots,t_{n-1},t_{n})=(t_{n}-\ell,t_{2}\ldots
\ldots,t_{n-1},t_{1}+\ell).
\end{gather*}
To get a bijection between the elements of $\mathcal{O}(\Lambda(\boldsymbol{s}
))$ and the $(n,\boldsymbol{s})$-cores of \Cref{subsec_ns_core}, it
then suffices to consider for each such $n$-tuple $\boldsymbol{t}$, the
$(n,\boldsymbol{s})$-core $\varphi^{-1}(\boldsymbol{\emptyset},\boldsymbol{t}
)$. Now given an element $w$ in the affine Weyl group $W$, the atomic length
$\sL_{\Lambda(\boldsymbol{s})}(w)$ corresponds to the number of simple roots
appearing in the decomposition of $\Lambda(\boldsymbol{s})-w\left(
\Lambda(\boldsymbol{s})\right)  $ on the basis of simple roots. It was then
established in \cite{JaconLecouvey2021} that this number equates the size  of
$\varphi^{-1}(\boldsymbol{\emptyset},\boldsymbol{t})$ where $\boldsymbol{t}
=w(\boldsymbol{s}^{\prime})$ under the previous action of $W$ on
$\mathbb{Z}^{n}$.

\begin{Def}
Denote by $D_{\boldsymbol{s}}$ the set of $n$-tuples in $\mathbb{Z}^{n}$ whose
distribution modulo $\ell$ is given by $\kappa^{\prime}$.
\end{Def}

\begin{Exa}
Assume $n=5$ and $\ell=3$. Consider $\kappa=(4,2,2)$. Then $\kappa^{\prime
}=(3,3,1,1,0)$ and $D_{\boldsymbol{s}}$ contains exactly the $5$-tuples
summing up $8$ with three residues modulo $3$ equal to $0$, two residues equal
to $1$ and no residue equal to $2$. Observe that in $\kappa^{\prime}$ the
multiplicities of the components equal to $1,\ldots,\ell-1$ are given by the
distribution of the residues but those of $0$ and $\ell$ are determined by
$\left\vert \kappa^{\prime}\right\vert $ and the number of residues equal to
$0$.
\end{Exa}

The following proposition is a reformulation of a result established in
\cite{Jacon:size}\footnote{Observe here that the convention for the action of
$\mathtt{s}_{0}$ in \cite{Jacon:size} is
\[
\mathtt{s}_{0}(t_{1},t_{2}\ldots,t_{n-1}t_{n})=(t_{n}+\ell,t_{2}\ldots
\ldots,t_{n-1},t_{1}-\ell).
\]
Both conventions coincide up to conjugation by the sign flip for the
coordinates in $\mathbb{Z}^{n}$.\ }.

\begin{Prop}
\label{Prop_PolyL}
With the previous notation the atomic length $\sL_{\Lambda(\boldsymbol{s})}$
admits the following polynomial expression in the coordinates of the elements
in $D_{\boldsymbol{s}}$
\[
\sL_{\Lambda(\boldsymbol{s})}(w)=P_{\boldsymbol{s}}(\boldsymbol{t})=\frac
{n}{2\ell}\sum_{i=1}^{n}t_{i}^{2}-\sum_{i=1}^{n}(i-1)t_{i}-c_{\boldsymbol{s}}
\]
where $\boldsymbol{t}=(t_{1},\ldots,t_{n})$ is the element of
$D_{\boldsymbol{s}}$ associated to $w\cdot\Lambda(\boldsymbol{s})$ by the
previous construction and $c_{\boldsymbol{s}}$ is the normalising constant
such that $P_{\boldsymbol{s}}(s_{1}^{\prime},\ldots,s_{n}^{\prime})=0$, i.e.
\[
c_{\boldsymbol{s}}=\frac{n}{2\ell}\sum_{i=1}^{n}(s_{i}^{^{\prime}})^{2}
-\sum_{i=1}^{n}(i-1)s_{i}^{\prime}.
\]
\end{Prop}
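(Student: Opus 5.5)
Proposition~\ref{Prop_PolyL} is a reformulation of the size formula for $(n,\boldsymbol{s})$-cores from \cite{Jacon:size}. I would prove it by showing that $P_{\boldsymbol{s}}$ has the same transformation rule as the atomic length under the simple reflections, and then inducting on the Coxeter length.

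\textbf{Step 1: transformation rule for the atomic length.} Recall that $\Lambda-\mathtt{s}_iw\Lambda = (\Lambda-w\Lambda)+\langle w\Lambda,\alpha_i^\vee\rangle\alpha_i$. Hence
$$\sL_{\Lambda(\boldsymbol{s})}(\mathtt{s}_iw)-\sL_{\Lambda(\boldsymbol{s})}(w)=\langle w\Lambda(\boldsymbol{s}),\alpha_i^\vee\rangle .$$
Both sides vanish at $w=1$, since $P_{\boldsymbol{s}}(\boldsymbol{s}')=0$ by the choice of $c_{\boldsymbol{s}}$.

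\textbf{Step 2: read the pairing off $\boldsymbol{t}$.} Under the dictionary $w\Lambda(\boldsymbol{s})\leftrightarrow \boldsymbol{t}=w(\boldsymbol{s}')$, I would identify $\langle w\Lambda(\boldsymbol{s}),\alpha_i^\vee\rangle$ as a linear expression in $\boldsymbol{t}$. The expected forms are $(t_{i+1}-t_i)/\ell$ up to sign and normalisation for $1\le i\le n-1$, and the analogous combination $(t_1-t_n+\ell)/\ell$ for $i=0$.

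These can be checked on $\boldsymbol{t}=\boldsymbol{s}'$, where the pairing must equal the multiplicity of $\Lambda_i$ in $\Lambda(\boldsymbol{s})$, namely $s'_{i+1}-s'_i$. This is precisely the number of parts of $\kappa$ equal to $i$, which pins down the normalisation. Equivariance of the dictionary then extends the identification to all $\boldsymbol{t}$.

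\textbf{Step 3: transformation rule for $P_{\boldsymbol{s}}$.} This is a direct computation. The quadratic part $\frac{n}{2\ell}\sum t_j^2$ is invariant under $\mathtt{s}_i$ for $i\ge1$. The linear part changes by
$$-\bigl(i\,t_i+(i-1)t_{i+1}\bigr)+\bigl((i-1)t_i+i\,t_{i+1}\bigr)=t_{i+1}-t_i .$$

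For $\mathtt{s}_0$, the map $(t_1,t_n)\mapsto(t_n-\ell,t_1+\ell)$ is applied. The quadratic part changes by $\frac{n}{2\ell}\bigl((t_n-\ell)^2+(t_1+\ell)^2-t_1^2-t_n^2\bigr)=n(t_1-t_n+\ell)$. The linear part changes by $-(n-1)(t_1-t_n+\ell)$. The total change is therefore $t_1-t_n+\ell$, which matches the pairing with $\alpha_0^\vee$.

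The scaling of the linear term versus the pairing must agree. The relative factor is fixed by Step 2: since $\boldsymbol{t}$ records levels, the pairing is $t_{i+1}-t_i$ with no division once the conventions are aligned. If a factor $\ell$ discrepancy appears, I would re-normalise the dictionary.

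\textbf{Step 4: conclude.} Induction on $\ell(w)$ using Steps 1 to 3 gives $\sL_{\Lambda(\boldsymbol{s})}(w)=P_{\boldsymbol{s}}(w(\boldsymbol{s}'))$. The step I expect to be the main obstacle is Step 2: matching conventions between the action on $\mathbb{Z}^n$, including the sign convention noted in the footnote, and the weight pairing. As a cross-check I would compare with the size formula of \cite{Jacon:size} after conjugating by the sign flip. I would also check that the case $\ell=1$, $\boldsymbol{s}=(0)$ recovers \eqref{GKS_pol}.
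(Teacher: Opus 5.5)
Your route is genuinely different from the paper's. The paper gives no independent argument: Proposition~\ref{Prop_PolyL} is imported from the size formula for $(n,\boldsymbol{s})$-cores in \cite{Jacon:size}. That formula is combined with the identification, from \cite{JaconLecouvey2021}, of $\sL_{\Lambda(\boldsymbol{s})}(w)$ with the number of boxes of $\varphi^{-1}(\boldsymbol{\emptyset},\boldsymbol{t})$. You instead check directly that $P_{\boldsymbol{s}}$ and $\sL_{\Lambda(\boldsymbol{s})}$ have the same increments along simple reflections and both vanish at the identity. This is self-contained, avoids abaci, and is in the spirit of the proof of Proposition~\ref{form_AL_window}. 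The paper's route, by contrast, delivers the box-counting interpretation that Corollary~\ref{cor_blocks} and Conjecture~\ref{conj_refined_GO} use. Your Steps 1, 3 and 4 are fine, and the increment computations for $P_{\boldsymbol{s}}$ are correct.

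Step 2, however, is wrong as stated, and it is the step that carries the content. Since $s'_i=\#\{k : s_k\geq n+1-i\}$, one has $s'_{i+1}-s'_i=\#\{k : s_k=n-i\}$. This is the multiplicity of $\Lambda_{n-i}$, not of $\Lambda_i$; only the $i=0$ identity $s'_1-s'_n+\ell=\#\{k : s_k=0\}$ is as you claim.

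So if $\mathtt{s}_i$ is read as the reflection in $\alpha_i$, your induction fails at length one. Take $n=3$, $\ell=1$, $\boldsymbol{s}=(1)$. Then $\boldsymbol{s}'=(0,0,1)$ and $P_{\boldsymbol{s}}(\boldsymbol{t})=\frac32\sum t_i^2-t_2-2t_3+\frac12$. But $\sL_{\Lambda_1}(s_1)=1$, while $P_{\boldsymbol{s}}(\mathtt{s}_1\boldsymbol{s}')=P_{\boldsymbol{s}}(\boldsymbol{s}')=0$. The dictionary must be twisted by the diagram automorphism $i\mapsto -i \bmod n$: $\mathtt{s}_i$ acts as $s_{n-i}$ for $1\le i\le n-1$, and $\mathtt{s}_0$ acts as $s_0$. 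With this, $P_{\boldsymbol{s}}(\mathtt{s}_2\boldsymbol{s}')=P_{\boldsymbol{s}}(0,1,0)=1$, as it should.

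A second issue is that checking the pairings at the single point $\boldsymbol{s}'$ and then invoking ``equivariance'' does not by itself propagate them to the whole orbit. You would also need the linear forms to transform under the $\mathtt{s}_j$ exactly as coroot pairings transform under the $s_j$.

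Both issues disappear if you define $\boldsymbol{t}$ directly. For $\mu=w\Lambda(\boldsymbol{s})$ with finite part $\sum_j\mu_j\varepsilon_j$, put $t_i=\mu_{n+1-i}+c$, with $c$ fixed by $\sum_i t_i=s$. Then the identities $\langle\mu,\alpha_{n-i}^\vee\rangle=t_{i+1}-t_i$ and $\langle\mu,\alpha_0^\vee\rangle=\ell-(\mu_1-\mu_n)=t_1-t_n+\ell$ hold identically. Moreover, $s_{n-i}$ and $s_0$ act on $\boldsymbol{t}$ exactly by the paper's $\mathtt{s}_i$ and $\mathtt{s}_0$, and $\Lambda(\boldsymbol{s})\mapsto\boldsymbol{s}'$, because $\overline{\Lambda_j}=\varepsilon_1+\cdots+\varepsilon_j$. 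Your Steps 1 and 3 then match term by term and the induction closes.

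Finally, drop the division by $\ell$ in Step 2. The pairings are the integers $t_{i+1}-t_i$ and $t_1-t_n+\ell$, which is what Step 3 requires.
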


\begin{Exa}\
\label{exa_Jac_GKS}
\begin{enumerate}
\item Assume that $\ell=1$ and $\bs=(s)$ with $0\leq s< n$,
so that $\kappa = (s)$ and $\kappa' = (0,\ldots,0,1,\ldots,1)$ ($s$ occurences of $1$).
Then $\Lambda(\boldsymbol{s})=\Lambda
_{s\mod n}$  and 
\begin{align*}
\sL_{\Lambda_{}(\bs)}(w) &
=P_{\boldsymbol{s}}(\boldsymbol{t})=
\frac{n}{2}\sum_{i=1}^{n}t_{i}^{2}-\sum_{i=1}^{n}(i-1)t_{i} + \frac{s}{2}(n-1) - \frac{s^2}{2}
\end{align*}
where $\boldsymbol{t}=(t_{1},\ldots,t_{n})\in\mathbb{Z}^{n}$ is such that
$t_{1}+\cdots+t_{n}=0$ as expected.
Observe that for $s=0$, this is essentially the Garvan-Kim-Stanton formula given in the introduction\footnote{
The only difference is a "plus" sign instead of a "minus" sign, but this is just a consequence of our choice of conventions (see previous footnote).
For instance, in \cite{Jacon:size}, the formula specialises precisely to \Cref{GKS_pol} at $s=0$.
This does not really matter, since universality of one polynomials is equivalent to universality of the other.
}, see \Cref{GKS_pol}.
\item Assume $\ell=n$ and $\boldsymbol{s}=(0,1,\ldots,n-1)$, so that $\La(\bs) =\rho$.
Then
$\boldsymbol{s}^{\prime}=\boldsymbol{s}$ and $\left\vert \boldsymbol{s}
\right\vert =\frac{n(n-1)}{2}$. We obtain
\[
\sL_{\La(\bs)}(w)= \sL(w)=P_{\boldsymbol{s}}(\boldsymbol{t})=\frac
{1}{2}\sum_{i=1}^{n}t_{i}^{2}-\sum_{i=1}^{n}(i-1)t_{i}-\frac{n(n-1)(2n-1)}{12}
\]
since
\[
c_{\boldsymbol{s}}=\frac{1}{2}\sum_{i=1}^{n}(i-1)^{2}-\sum_{i=1}
^{n}(i-1)(i-1)=\frac{n(n-1)(2n-1)}{12}.
\]
in this case. 
The set $D_{\boldsymbol{s}}$ is the subset of $\mathbb{Z}^{n}$ 
whose elements have distinct
coordinates modulo $n$ and summing up to $\frac{n(n-1)}{2}$. This is
slightly different from the expression obtained in Proposition
\ref{form_AL_window}, which was considering the same dominant weight.
This comes from the fact the set $D$ considered in Proposition
\ref{form_AL_window} is the subset of $\mathbb{Z}^{n}$ whose elements have distinct
coordinates modulo $n$ and summing up to $\frac{n(n+1)}{2}$ (that is the
orbit of $(1,2,\ldots,n)$).\ Thus to get the expression of
Proposition \ref{form_AL_window}, we have to set $w_{i}=t_{i}+1$ which gives
\begin{align*}
P_{\boldsymbol{s}}(\boldsymbol{t}) &  =\frac{1}{2}\sum_{i=1}^{n}t_{i}^{2}
-\sum_{i=1}^{n}(i-1)t_{i}+\frac{n(n-1)(2n-1)}{12}\\
&  =\frac{1}{2}\sum_{i=1}^{n}(w_{i}-1)^{2}-\sum_{i=1}^{n}(i-1)(w_{i}
-1)+\frac{n(n-1)(2n-1)}{12}\\
&  =\frac{1}{2}\sum_{i=1}^{n}w_{i}^{2}-\sum_{i=1}^{n}iw_{i}+\frac{n}{2}
+\sum_{i=1}^{n}(i-1)+\frac{n(n-1)(2n-1)}{12}\\
&  =\frac{1}{2}\sum_{i=1}^{n}w_{i}^{2}-\sum_{i=1}^{n}iw_{i}+\frac{n}{2}
+\frac{n(n-1)}{2}+\frac{n(n-1)(2n-1)}{12}\\
&  =\frac{1}{2}\sum_{i=1}^{n}w_{i}^{2}-\sum_{i=1}^{n}iw_{i}+\frac
{n(n+1)(2n+1)}{12}
\end{align*}
as expected. 
\end{enumerate}
\end{Exa}

Combining \Cref{Th_EntropyA}, \Cref{Th_blocks} and Point 2. of \Cref{exa_Jac_GKS}
yields the following immediate Corollary,
which is an analogue of the famous \cite[Corollary 1]{GO1996}.

\begin{Cor}\label{cor_blocks}
For all $n\geq 5$ and $\bs=(0,1,2,\ldots, n-1)$, the Ariki-Koike algebra $\cH_{(n,\bs)}$ has a defect zero block. 
\end{Cor}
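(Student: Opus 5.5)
The plan is to chain together three results already in the paper: \Cref{Th_EntropyA}, Point 2 of \Cref{exa_Jac_GKS}, and \Cref{Th_blocks}.

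First, I identify the relevant weight and orbit. For $\bs=(0,1,\ldots,n-1)$ we have $\ell=n$ and $\La(\bs)=\rho$, as noted in Point 2 of \Cref{exa_Jac_GKS}. The $(n,\bs)$-cores are in bijection with the $W$-orbit of $\La(\bs)$, hence with $W$ itself, since $\rho$ is regular dominant and so has trivial stabiliser. By \cite{JaconLecouvey2021}, recalled in \Cref{Subsec_AtomLengthGeneralDom}, the size of the $(n,\bs)$-core attached to $w$ is $\sL_{\La(\bs)}(w)=\sL(w)$.

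Next, I show that every size occurs. Fix $n\geq 5$ and an integer $k\geq 0$. By \Cref{Th_EntropyA}, point (c), there is some $w\in W$ with $\sL(w)=k$. The corresponding $(n,\bs)$-core $\varphi^{-1}(\boldsymbol{\emptyset},\boldsymbol t)$, with $\boldsymbol t=w(\bs')$, is an $\ell$-partition of rank $k$. The equality $P_{\bs}(\boldsymbol t)=\sL(w)$ computed in \Cref{exa_Jac_GKS} confirms that the conventions match.

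Finally, I conclude using \Cref{Th_blocks}. Two $\ell$-partitions of the same rank lie in the same block exactly when they share the same $(n,\bs)$-core. An $(n,\bs)$-core $\boldsymbol\nu$ of rank $k$ is its own core, and it is the only $\ell$-partition of rank $k$ with that core, because $\varphi$ is a bijection. Therefore its block contains exactly one $\ell$-partition, i.e. it is a defect zero block.

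The only delicate step is making sure the notion of ``rank'' matches: the algebra $\cH_{(n,\bs)}$ must be taken of rank $k$ equal to the size of the core. This shows that the Ariki–Koike algebra of rank $k$ has a defect zero block for every $k$, which is exactly what the statement asserts.
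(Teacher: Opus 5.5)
Your proposal is correct and follows the paper's own argument. The paper derives \Cref{cor_blocks} as an immediate consequence of \Cref{Th_EntropyA}, \Cref{Th_blocks} and Point 2 of \Cref{exa_Jac_GKS}, which is the same chain you give, with the rank of $\cH_{(n,\bs)}$ taken equal to the size $k$ of the core.

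One small point concerns the uniqueness step. Bijectivity of $\varphi$ alone only shows that any other $\ell$-partition with the same core has a nonempty $n$-quotient. To exclude it in rank $k$ you also need that, for fixed $\ell$-charge and $n$-charge, the rank strictly increases with the size of the quotient. This is a standard fact, and the paper simply assumes it when it says that defect zero blocks consist of a single $(n,\bs)$-core.
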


\subsection{\texorpdfstring{Dilation of the lattice $\mathbb{Z}^{n}$}{Dilation of the lattice Zn}}

The goal of this paragraph is to see that the previous atomic length
$\sL_{\Lambda(\boldsymbol{s})}$ can also be expressed just by considering the
half square of the usual Euclidean norm evaluated on a particular lattice depending
on $n$ and $\ell$. Resume the notation of the previous paragraph and set
\[
Q_{\boldsymbol{s}}(\boldsymbol{t})=\frac{n}{\ell}P_{\boldsymbol{s}}
(\boldsymbol{t})=\frac{n^{2}}{2\ell^{2}}\sum_{i=1}^{n}t_{i}^{2}-\frac{n}{\ell}
\sum_{i=1}^{n}(i-1)t_{i}-\frac{n}{\ell}c_{\boldsymbol{s}}.
\]
Clearly, $Q_{\boldsymbol{s}}(\boldsymbol{t})$ belongs to $\frac{n}
{\ell}\mathbb{Z}$ for any $\boldsymbol{t}\in D_{\boldsymbol{s}}$ and
$P_{\boldsymbol{s}}$ is universal on $D_{\boldsymbol{s}}$ if and only if
$Q_{\boldsymbol{s}}$ represents all the rationals in $\frac{n}{_{\ell}}\mathbb{N}$
when $\boldsymbol{t}$ runs over $D_{\boldsymbol{s}}$. We have moreover
\[
Q_{\boldsymbol{s}}(\boldsymbol{t})=\frac{n}{\ell}P_{\boldsymbol{s}}
(\boldsymbol{t})=\frac{1}{2}\sum_{i=1}^{n}\left(  \frac{nt_{i}}{\ell}\right)
^{2}-\sum_{i=1}^{n}(i-1)\left(  \frac{nt_{i}}{\ell}\right)  -\frac{n}
{\ell}c_{\boldsymbol{s}}
\]
which suggests to set $y_{i}=\frac{nt_{i}}{\ell}$ and works in $\frac{n}
{\ell}D_{\boldsymbol{s}}$.\ Therefore $P_{\boldsymbol{s}}$ is universal on
$D_{\boldsymbol{s}}$ if and only if
\[
F_{\boldsymbol{s}}(y)=\frac{1}{2}\sum_{i=1}^{n}y_{i}^{2}-\sum_{i=1}
^{n}(i-1)y_{i}-d_{\boldsymbol{s}}
\]
with
\[
d_{\boldsymbol{s}}=\frac{n^{2}}{2\ell^{2}}\sum_{i=1}^{n}(s_{i}^{^{\prime}}
)^{2}-\frac{n}{\ell}\sum_{i=1}^{n}(i-1)s_{i}^{\prime}
\]
is $\frac{n}{\ell}$-universal on $\frac{n}{\ell}D_{\boldsymbol{s}}$ (i.e. attains all the rationals in $\frac{n}{\ell}\mathbb{N}$). In particular
$F_{\boldsymbol{s}}\left(  \frac{n}{\ell}s_{1}^{\prime},\ldots,\frac{n}{\ell}
s_{n}^{\prime}\right)  =F_{\boldsymbol{s}}\left(  \frac{n}{\ell}\boldsymbol{s}
^{\prime}\right)  =0$. Now observe that
\[
F_{\boldsymbol{s}}(y)=\frac{1}{2}\sum_{i=1}^{n}(y_{i}-i+1)^{2}-\frac
{n(n-1)(2n-1)}{12}-d_{\boldsymbol{s}}
\]
so by setting $z_{i}=y_{i}-n+i$, we are reduced to the study of
\[
\hat{F}_{\boldsymbol{s}}(z)=\frac{1}{2}\left\Vert z\right\Vert ^{2}
-\widehat{d}_{\boldsymbol{s}}
\]
on
\[
\frac{n}{\ell}D_{\boldsymbol{s}}^{n}-\delta\text{ where }\delta=(0,1,\ldots,n-1)
\]
with
\[
\widehat{d}_{\boldsymbol{s}}=\frac{1}{2}\left\Vert \frac{n}{\ell}\boldsymbol{s}
^{\prime}-\delta\right\Vert ^{2}.
\]

Therefore, we get the following proposition, which reduces the study of the universality of $\sL_{\Lambda(\boldsymbol{s})}$ to a similar problem for the Euclidean norm on a subset of $\mathbb{Z}^n$ in the spirit of the questions evoked in the introduction.

\begin{Prop}
\label{Prop_DilLattice}
The atomic length $\sL_{\Lambda(\boldsymbol{s})}$ is universal if and only if
$\hat{F}_{\boldsymbol{s}}(z)=\frac{1}{2}\left\Vert z\right\Vert ^{2}
-\widehat{d}_{\boldsymbol{s}}$ is $\frac{n}{\ell}$-universal on $\frac{n}{\ell}D_{\boldsymbol{s}
}^{n}-\delta$.
\end{Prop}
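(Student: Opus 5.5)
The plan is to chain together the successive reformulations made in the paragraph preceding the statement. Each step is either multiplication by a positive constant or an affine change of variables, and each is a bijection. So the set of values attained is transported exactly, and universality in one formulation is equivalent to the appropriate notion of universality in the next.

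\emph{Step 1.} By \Cref{Prop_PolyL}, the map $w\mapsto \boldsymbol{t}$, where $\boldsymbol{t}$ is the $n$-tuple attached to $w\cdot\Lambda(\boldsymbol{s})$, identifies the orbit with $D_{\boldsymbol{s}}$, and $\sL_{\Lambda(\boldsymbol{s})}(w)=P_{\boldsymbol{s}}(\boldsymbol{t})$. Since $\sL_{\Lambda(\boldsymbol{s})}$ depends only on $w\cdot\Lambda(\boldsymbol{s})$, we have $\sL_{\Lambda(\boldsymbol{s})}(W)=P_{\boldsymbol{s}}(D_{\boldsymbol{s}})$. Hence universality of the atomic length is equivalent to $P_{\boldsymbol{s}}(D_{\boldsymbol{s}})=\mathbb{N}$.

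\emph{Step 2.} Multiplying by $\frac{n}{\ell}>0$ gives $Q_{\boldsymbol{s}}(D_{\boldsymbol{s}})=\frac{n}{\ell}P_{\boldsymbol{s}}(D_{\boldsymbol{s}})$. So the condition becomes $Q_{\boldsymbol{s}}(D_{\boldsymbol{s}})=\frac{n}{\ell}\mathbb{N}$.

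\emph{Step 3.} We substitute $y=\frac{n}{\ell}\boldsymbol{t}$, which is a bijection $D_{\boldsymbol{s}}\to\frac{n}{\ell}D_{\boldsymbol{s}}$. Expanding shows $Q_{\boldsymbol{s}}(\boldsymbol{t})=F_{\boldsymbol{s}}(y)$, and the constants match because $d_{\boldsymbol{s}}=\frac{n}{\ell}c_{\boldsymbol{s}}$. This is a direct check from the definitions of $c_{\boldsymbol{s}}$ and $d_{\boldsymbol{s}}$.

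\emph{Step 4.} We complete the square to get $F_{\boldsymbol{s}}(y)=\frac12\|y-\delta\|^2-\mathrm{const}$, then translate by $z=y-\delta$, which is a bijection onto $\frac{n}{\ell}D_{\boldsymbol{s}}-\delta$. The target form is $\frac12\|z\|^2-\widehat{d}_{\boldsymbol{s}}$. To identify the constant without bookkeeping, I would use the normalisation: $P_{\boldsymbol{s}}(\boldsymbol{s}')=0$, hence $F_{\boldsymbol{s}}(\frac{n}{\ell}\boldsymbol{s}')=0$. It follows that the constant equals $\frac12\|\frac{n}{\ell}\boldsymbol{s}'-\delta\|^2=\widehat{d}_{\boldsymbol{s}}$, as claimed.

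The only genuine obstacle is keeping the shifts and constants consistent. The text uses both $z_i=y_i-n+i$ and the vector $\delta=(0,1,\ldots,n-1)$, and these need to be reconciled. The clean way is to complete the square in $\sum_i(\frac12y_i^2-(i-1)y_i)$, which produces the shift by $\delta_i=i-1$. We then take $z=y-\delta$ and determine the constant by evaluating at the base point rather than by computing $\frac{n(n-1)(2n-1)}{12}+d_{\boldsymbol{s}}$ directly. With this choice every map is a bijection, so the equivalence holds in both directions.
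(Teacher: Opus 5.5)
Your proposal is correct and follows essentially the same chain of bijective reformulations as the paper: rescale by $\frac{n}{\ell}$, substitute $y=\frac{n}{\ell}\boldsymbol{t}$, complete the square, then translate by $\delta$. Your choice $z=y-\delta$ with $\delta=(0,1,\ldots,n-1)$ correctly repairs the paper's inconsistent shift $z_i=y_i-n+i$. Evaluating at $\frac{n}{\ell}\boldsymbol{s}'$ is also a clean way to get $\widehat{d}_{\boldsymbol{s}}=\frac{n(n-1)(2n-1)}{12}+d_{\boldsymbol{s}}=\frac12\Vert\frac{n}{\ell}\boldsymbol{s}'-\delta\Vert^2$.
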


\begin{Rem}
\ 

\begin{enumerate}

\item When $\ell=1$ and $\boldsymbol{s}=\boldsymbol{s}^{\prime}=(0)$,
$D_{\boldsymbol{s}}^{n}$ reduces to the subset $n\mathcal{Q}_{n}$ of
$\mathbb{Z}^{n}$ of vectors with coordinates multiple of $n$ and summing up to
zero.\ We have $\widehat{d}_{\boldsymbol{s}}=0$.\ We so get
\[
\hat{F}_{\boldsymbol{s}}(z)=\frac{1}{2}\left\Vert z\right\Vert ^{2}
-\frac{n(n-1)(2n-1)}{12}
\]
and the Granville-Ono theorem means that $\hat{F}_{\boldsymbol{s}}$ is
universal on $n\mathcal{Q}_{n}$ for $n\geq4$.

\item When $\ell=n$ and $\boldsymbol{s}=(0,\ldots,n-1)=\delta$, we get
$\boldsymbol{s}^{\prime}=\boldsymbol{s}=\delta$ and $\widehat{d}
_{\boldsymbol{s}}=0$.\ This gives
\[
\hat{F}_{\boldsymbol{s}}(z)=\frac{1}{2}\left\Vert z\right\Vert ^{2}
\]
where $D_{\boldsymbol{s}}^{n}=\Delta_{n}$ according to the results of
 \Cref{subsec_eucli_290}.

\item The previous proposition connects the universality of the atomic length to
that of the Euclidean norm on subsets of a $\mathbb{Z}$-lattice, say $L$,
which are essentially obtained from $L$ by removing some affine
hyperplanes.\ So, roughly speaking, in the problems considered in this paper,
the number of points to remove in our $\mathbb{Z}$-lattices $L$ located on a sphere of radius $k$ is
at most polynomial in $n$.\ On the other hand, the number of integral points on a sphere
of radius $k$ has rather an exponential growth of order $k^{(n/2)-1}$ (see for example
Chapter 13 in \cite{grosswald1985representations} ).\ Heuristically, one can thus expect a positive answer to many of our
universality problems when $n$ or the integers to represent become sufficiently large. This observation
will be in particular illustrated in Section \ref{sec_large_rank}.

\end{enumerate}
\end{Rem}

\subsection{\texorpdfstring{Sum of the first $\ell$ fundamental weights}{Sum of the first ℓ fundamental weights}} \label{sec_conj_trunc}

As already observed in \cite{CG2022}, the atomic length can only be universal
for dominant weights $\Lambda$ having at least a coordinate equal to $1$ in
their decomposition on the basis of fundamental weights (otherwise $1$ is not
in the image of $\sL_{\Lambda}$). Recall that $\ell$ is such
that $1\leq \ell\leq n$. Thus, we can reduce the study of the
universality $\sL_{\Lambda}$ to the elements $\boldsymbol{s}$ in
$\mathbb{Z}^{\ell}$ having at least a component appearing with multiplicity
$1$.\ In the rest of this paragraph, we will restrict ourselves to the
element $\boldsymbol{s}=(0,1,\ldots,\ell-1)$ defining the dominant weight
\[
\Lambda(\boldsymbol{s})=\Lambda_{0}+\cdots+\Lambda_{\ell-1}\text{.}
\]
Then, we get $\kappa^{\prime}=(\ell-1,\ldots,1,0,\ldots,0)\in\mathbb{Z}^{n}$ with
$\left\vert \boldsymbol{s}\right\vert =\frac{\ell(\ell-1)}{2}$ that is
\[
(s_{1}^{\prime},\ldots,s_{n}^{\prime})=(0^{n-\ell+1},1,\ldots,\ell-1).
\]
We obtain
\[
\sL_{\Lambda_{0}+\cdots+\Lambda_{\ell-1}}(w)=P_{(0,\ldots,\ell-1)}(\boldsymbol{t}
)=\frac{n}{2\ell}\sum_{i=1}^{n}t_{i}^{2}-\sum_{i=1}^{n}(i-1)t_{i}-c_{(0,\ldots
,\ell-1)}
\]
with

\[
c_{(0,\ldots,\ell-1)}=\frac{n}{2\ell}\sum_{i=1}^{n}(s_{i}^{^{\prime}})^{2}
-\sum_{i=1}^{n}(i-1)s_{i}^{\prime}=\frac{n}{2\ell}\sum_{i=1}^{\ell-1}i^{2}
-\sum_{i=1}^{\ell-1}(n-l+i)i=\frac{(\ell-1)}{12}\left(  2\ell^{2}-4nl+2\ell-n\right)
\]

\begin{Exa}
Assume $\ell=2$.\ We have $\boldsymbol{s}=(0,1),$ $\boldsymbol{s}^{\prime
}=(0^{n-1},1),c_{\boldsymbol{s}}=1-\frac{3}{4}n$ and
\[
P_{(0,1)}(\boldsymbol{t})=\frac{n}{4}\sum_{i=1}^{n}t_{i}^{2}-\sum_{i=1}
^{n}(i-1)t_{i}+\frac{3}{4}n-1
\]
where $\boldsymbol{t}$ runs over the $n$-tuples of integers summing up $1$
which are all even but one. We can set $\boldsymbol{t}=2\boldsymbol{u}
+\varepsilon_{k}$ where $\varepsilon_{k}$ is one of the vectors of the
canonical basis of $\mathbb{Z}^{n}$ and $\sum_{i=1}^{n}u_{i}=0$. By an easy
computation, one obtains
\[
P_{\boldsymbol{s}}(\boldsymbol{t})=2\left(  \frac{n}{2}\sum_{i=1}^{n}u_{i}
^{2}-\sum_{i=1}^{n}(i-1)u_{i}\right)  +n-k+nu_{k}
\]
and one recognizes an expression of the atomic length of level $1$. Namely, we
have
\[
P_{(0,1)}(\boldsymbol{t})=2P_{0}(u)+n-k+u_{k}\text{ where }\boldsymbol{t}
=2\boldsymbol{u}+\varepsilon_{k}.
\]
Finding the generating series of $P_{\boldsymbol{s}}$ in this case is an
interesting non trivial problem.\ 
Also, it is not clear whether
the universality of $P_{\boldsymbol{s}}$ can be studied
using that of $P_{\boldsymbol{0}}$, notably because $k$ can be any integer between $1$ and $n$.
\end{Exa}

\begin{Conj}
\label{conj_trunc_affine}
Assume $n\geq5$. Then, for any $1\leq \ell\leq n$, the atomic length
$\sL_{\Lambda_{0}+\cdots+\Lambda_{\ell-1}}$ is universal. Equivalently, the
polynomial $P_{(0,1,\ldots,\ell-1)}$ is universal on $D_{(0^{n-\ell+1}
,1,\ldots,\ell-1)}$.
\end{Conj}

In addition to computational evidence, 
this conjecture is supported by the results of \Cref{sec_finite}, 
where we prove that the finite analogue statement holds, and in fact for all classical types.
In general, characterising universal weights remains a complicated open problem.

\subsection{A refinement of the Granville-Ono problem}

Let us recall the notation of \Cref{subsec_ns_core}. We shall assume that $n=\ell$ in the paragraph. 
For abaci with only one runner, it was
observed that the bijection $\varphi$ coincides with the usual notion of
$n$-quotient of the charged partition $(\lambda,s)$.\ In particular, 
we recover the bijection between $n$-cores partitions with charge $s$ and
elements of $\mathbb{Z}^{n}[s]$, that is $n$-tuples of integers summing up to $s$. 
On the other hand, we described at the beginning of \Cref{Subsec_AtomLengthGeneralDom} a one-to-one correspondence between
$(n,\boldsymbol{s})$-cores and the subset of $\mathbb{Z}^{n}[s]$ of $n$-tuples
whose distribution modulo $n$ coincides with that of $\boldsymbol{s}^{\prime}
$. This therefore gives us an embedding of the set of $(n,\boldsymbol{s}
)$-cores into the set of (usual) $n$-cores.\ We can then refine the
Granville-Ono problem by asking whether the subset of $n$-cores obtained this way 
is still universal for a fixed core multicharge $\boldsymbol{s}$. The goal of this
paragraph is to explain how this question can also be reduced to the study of
the Garvan-Kim-Stanton polynomial \Cref{GKS_pol},
or its slight generalisation given in \Cref{exa_Jac_GKS},
evaluated on the orbit of $\boldsymbol{s}^{\prime}$ under the action of the affine group $W$.

\medskip

So fix a $n$-tuple $\boldsymbol{s}=(s_{1},\ldots,s_{n})$ such that $0\leq
s_{1}\leq\cdots\leq s_{n}<n$ with $s=\sum_{i=1}^{n}s_{i}$ and consider the
quadratic polynomial
\[
P_{\boldsymbol{s}}(t_{1},\ldots,t_{n})=\frac{n}{2}\sum_{i=1}^{n}t_{i}^{2}
+\sum_{i=1}^{n}(i-1)t_{i}-\frac{s}{2}(n-1)-\frac{s^{2}}{2}
\]
evaluated on the set $\mathcal{O}_{\boldsymbol{s}}$ of $n$-tuples
$\boldsymbol{t}=(t_{1},\ldots,t_{n})$ such that $\sum_{i=1}^{n}t_{i}=s$ and
$\{t_{1}\mod n,\ldots,t_{n}\mod n\}=\{s_{1}^{\prime
},\ldots,s_{n}^{\prime}\}$ (i.e., the distribution of the residues in the
$n$-tuples $\boldsymbol{s}^{\prime}$ and $\boldsymbol{t}$ is the same). By
results evoked in \Cref{subsec_ns_core},
$P_{\boldsymbol{s}}(t_{1},\ldots,t_{n})$ counts the size of the
$n$-core with charge $s$ whose $n$-quotient corresponds to
$\boldsymbol{t}$.\ Moreover $\mathcal{O}_{\boldsymbol{s}}$ then corresponds to
the orbit of $\boldsymbol{s}$ under the action of the affine symmetric group
such that
\begin{gather*}
\mathtt{s}_{i}(t_{1},\ldots,t_{i},t_{i+1},\ldots,t_{n})=(t_{1},\ldots
,t_{i+1},t_{i},\ldots,t_{n})\text{ for }i=1,\ldots n\text{ and}\\
\mathtt{s}_{0}(t_{1},t_{2}\ldots,t_{n-1},t_{n})=(t_{n}-n,t_{2}\ldots
\ldots,t_{n-1},t_{1}+n)
\end{gather*}

\medskip

Assume that $\boldsymbol{s}=\boldsymbol{s}^{\prime}=(0,\ldots,n-1)$.
Then $\mathcal{O}_{\bs}$ is the set of $n$-tuples $(t_{1},\ldots,t_{n})\in\Z^n$ with
distinct residues modulo $n$ and such that $t_{1}+\cdots+t_{n}=\frac{n(n-1)}{2}$.
Then, we get
\[
P_{\boldsymbol{s}}(0,\ldots,n-1)=\binom{n+2}{4}.
\]
In general
\[
P_{\boldsymbol{s}}(t_{1},\ldots,t_{n})=\frac{n}{2}\sum_{i=1}^{n}t_{i}^{2}
+\sum_{i=1}^{n}(i-1)t_{i}-\frac{n(n-1)^{2}(n-2)}{8}
\]
when $\boldsymbol{t}$ belongs to $\mathcal{O}_{\boldsymbol{s}}$. Here,
$P_{\boldsymbol{s}}(t_{1},\ldots,t_{n})$ is not symmetric in $t_{1}
,\ldots,t_{n}$ therefore we cannot assume $t_{i}=i\mod n$ for
$i=1,\ldots,n$.\ 

Now set
\begin{align*}
Q_{\boldsymbol{s}}(t_{1},\ldots,t_{n})  &  =P_{\boldsymbol{s}}(t_{1}
,\ldots,t_{n})-P_{\boldsymbol{s}}(0,\ldots,n-1)\\
& =\frac{n}{2}\sum_{i=1}^{n}t_{i}^{2}+\sum_{i=1}^{n}(i-1)t_{i}-\frac{n\left(
n-1\right)  \left(  n-2\right)  ^{2}}{12}.
\end{align*}
The following conjecture is supported by computational experiments.

\begin{Conj}\label{conj_refined_GO}
Assume $n\geq6$. Then $Q_{\boldsymbol{s}} (t_{1},\ldots,t_{n})$ is universal
on $\mathcal{O}_{\bs}$.
\end{Conj}

\begin{Rem}
For $n=5$, we conjecture that only $125$ is not represented by $Q_{\boldsymbol{s}}$.
\end{Rem}

\section{Universality in finite types}
\label{sec_finite}

In \Cref{sec_conj_trunc}, we considered the level $\ell$ weights 
$\La_0+\cdots+\La_{\ell-1}$ with $\ell\leq n$, and conjectured that the corresponding atomic lengths are universal.
We will now prove that the analogous statement holds in finite type, 
thereby providing further evidence to support \Cref{conj_trunc_affine}.
In fact, the results of this section will be established not only in type $A$, but for all finite classical types.

\medskip

Let us consider a finite root system of type $A_{n}, B_n, C_n$ or  $D_n$.
By analogy\footnote{
We will follow the conventions of \cite{BOURB} (and so will we in \Cref{sec_univB} and \Cref{sec_large_rank}), namely that $n$ is the special node of the Dynkin diagram.
Therefore, to define $\overline{\rho}_\ell$, we have to start summing from $\om_n$.} 
with the affine weight considered in \Cref{sec_conj_trunc},
we consider the weight $\overline{\rho}_\ell = \om_n+\om_{n-1}\cdots+ \om_{n-\ell+1}$ for $1\leq \ell\leq n$ (or $2\leq \ell\leq n$ in type $D_n$).
In particular, for $\ell =n$, we recover the dominant weight $\overline{\rho}:=\om_1+\cdots+\om_{n}$
which is the half-sum of the positive roots.

We will completely characterise in \Cref{thm_sat_trunc} when the atomic length $\sL_{\overline{\rho}_\ell}: {\mathring{W}} \to \N$ 
associated to $\overline\rho_\ell$ describes an interval:
this is the finite analogue of being universal.
First of all, let us compute the maximum of the function $\sL_{\overline{\rho}_\ell}$,
which we know is realised at the longest element $w_0$ of ${\mathring{W}}$, see \cite[Theorem 5.11]{CG2022}.
We will prove in \Cref{thm_sat_trunc} that $\rho_\ell$ is universal.

\begin{Lem}\label{lem_bornes}
Denote $b_{n,\ell} = \sL_{\overline{\rho}_\ell}(w_0)$ for all $1\leq \ell\leq n$.
Then
\begin{enumerate}
\item $b_{n,\ell} =\ds \frac{\ell(\ell+1)(3n-2\ell+2)}{6}$ in type $A_{n}$,
\item $b_{n,\ell} =\ds \frac{3n(n+1)(2\ell-1) - 2\ell(\ell^2-1)}{6}$ in type $B_n$,
\item $b_{n,\ell} =\ds \frac{(6n^2-1)\ell - \ell^2(2\ell-3)}{6}$ in type $C_n$,
\item $b_{n,\ell} =\ds \frac{(\ell-1)(3n^2-3n-\ell(\ell-2))}{3}$ in type $D_n$ (remember that $\ell\geq 2$ in this case).
\end{enumerate}
\end{Lem}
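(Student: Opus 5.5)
The plan is to avoid any computation of $w_0$ acting on root coordinates by rewriting the height as a pairing. For $v=\sum a_i\al_i$ we have $\h(v)=\langle v,\overline{\rho}^\vee\rangle$, where $\overline{\rho}^\vee$ is the half-sum of the positive coroots, since $\langle \al_i,\overline{\rho}^\vee\rangle=1$ for every simple root. Because $w_0\overline{\rho}^\vee=-\overline{\rho}^\vee$ and $w_0$ is an isometry, this gives
$$b_{n,\ell}=\langle \overline{\rho}_\ell-w_0\overline{\rho}_\ell,\overline{\rho}^\vee\rangle=\langle \overline{\rho}_\ell,\overline{\rho}^\vee\rangle+\langle \overline{\rho}_\ell,\overline{\rho}^\vee\rangle=2\sum_{i=n-\ell+1}^{n}\langle \om_i,\overline{\rho}^\vee\rangle .$$
This identity holds in every type, whether or not $w_0=-1$. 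So the whole lemma reduces to evaluating $\langle\om_i,\overline{\rho}^\vee\rangle$ in each classical type and summing over the last $\ell$ nodes.

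I would then work in the $\eps$-coordinates of \cite{BOURB}, type by type.
\begin{itemize}
\item Type $A_n$. Here $\overline{\rho}^\vee=\overline{\rho}$ has coordinates $\frac{n}{2}-k+1$ for $k=1,\ldots,n+1$, and $\om_i$ is $\eps_1+\cdots+\eps_i$ projected to the sum-zero hyperplane. This gives $\langle\om_i,\overline{\rho}^\vee\rangle=\frac{i(n+1-i)}{2}$. Substituting $j=n+1-i$, I get $b_{n,\ell}=\sum_{j=1}^{\ell} j(n+1-j)$, which simplifies to $\frac{\ell(\ell+1)(3n-2\ell+2)}{6}$.
\item Type $B_n$. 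Here $\overline{\rho}^\vee=(n,n-1,\ldots,1)$, which is the $\rho$ of type $C_n$. We have $\om_i=\eps_1+\cdots+\eps_i$ for $i<n$, so $\langle\om_i,\overline{\rho}^\vee\rangle=\frac{i(2n+1-i)}{2}$. The spin weight $\om_n=\frac12(\eps_1+\cdots+\eps_n)$ contributes $\frac{n(n+1)}{4}$.
\item Type $C_n$. Here $\overline{\rho}^\vee=(n-\frac12,\ldots,\frac12)$ and $\om_i=\eps_1+\cdots+\eps_i$ for all $i$, so $\langle\om_i,\overline{\rho}^\vee\rangle=\frac{i(2n-i)}{2}$.
\item Type $D_n$. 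Here $\overline{\rho}^\vee=(n-1,\ldots,1,0)$. The weights $\om_i$ for $i\le n-2$ contribute $\frac{i(2n-1-i)}{2}$, and the two half-spin weights each contribute $\frac{n(n-1)}{4}$. The restriction $\ell\ge2$ guarantees both $\om_{n-1}$ and $\om_n$ are in $\overline{\rho}_\ell$. This matters only to make the sum natural (and $-w_0$-stable when $n$ is odd); the formula $2\langle\overline{\rho}_\ell,\overline{\rho}^\vee\rangle$ is valid regardless.
\end{itemize}

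The remaining work is to sum these closed forms over $i=n-\ell+1,\ldots,n$, treating the spin terms separately in types $B$ and $D$. Each case is a sum of a linear and a quadratic polynomial in $i$ over $\ell-1$ or $\ell$ consecutive integers, plus one or two constant terms. I then compare with the stated cubic expressions in $\ell$. As sanity checks I will use $\ell=1$, where $b_{n,1}$ should equal twice the height pairing of the last fundamental weight, and $\ell=n$, where one should recover $2\|\overline{\rho}\|^2$-type values, or simply $|\Phi^+|$-weighted heights, known from \cite{CG2022}.

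I expect no conceptual obstacle. The step most prone to error is the bookkeeping in types $B_n$ and $D_n$: the spin weights must be separated from the generic ones, and the reindexing $i\mapsto n+1-i$ must be done consistently. In particular, in type $D_n$ the generic part is a sum over $\ell-2$ indices plus two spin terms. I would verify the final polynomials by checking two or three small values of $(n,\ell)$ directly.
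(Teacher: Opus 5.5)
Your proposal is correct, and every per-weight value you list checks out. The summations you postpone also close up to the stated formulas. With $i=n-j$, the summands $2\h(\om_i)$ become $n(n+1)-j(j+1)$, $n^2-j^2$ and $n(n-1)-j(j-1)$ in types $B_n$, $C_n$, $D_n$. For example, in type $D_n$ this gives $(\ell-1)n(n-1)-\frac{\ell(\ell-1)(\ell-2)}{3}$.

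You share the paper's skeleton: reduce to $b_{n,\ell}=2\h(\overline\rho_\ell)$, then add up the heights of the last $\ell$ fundamental weights. You justify both steps differently, however.
\begin{itemize}
\item The paper gets the reduction case by case from the action of $w_0$ on weights. In type $A_n$ it uses $w_0(\om_i)=-\om_{n+1-i}$ plus the symmetry $\h(\om_i)=\h(\om_{n+1-i})$. In types $B_n,C_n$ it uses $w_0=-1$. In type $D_n$ it uses $w_0(\overline\rho_\ell)=-\overline\rho_\ell$, which for odd $n$ requires $\ell\geq 2$. It then reads the heights off Bourbaki's expansions of the $\om_i$ in simple roots.
\item Your identity $\h=\langle\cdot,\overline\rho^\vee\rangle$, together with $w_0\overline\rho^\vee=-\overline\rho^\vee$, gives the reduction uniformly, with no knowledge of how $w_0$ acts on weights. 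It also turns each height into a one-line inner product in $\eps$-coordinates. The only price is keeping the coroot normalisation straight in types $B_n$ and $C_n$, which you did.
\end{itemize}

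Your route also clarifies the role of $\ell\geq 2$ in type $D_n$. It is needed for the closed form (for $\ell=1$ one gets $\frac{n(n-1)}{2}$, not $0$), not for the reduction.

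Finally, your type $D_n$ bookkeeping ($\ell-2$ generic indices plus two spin terms of $\frac{n(n-1)}{4}$ each) is the right one. The paper's displayed intermediate sum in that case runs over $i=1,\ldots,\ell-1$ and adds $2\frac{n(n-1)}{2}$, which overcounts by $n(n-1)$. Its final formula nevertheless coincides with yours.
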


\begin{proof}
We have $\sL_{\overline{\rho}_\ell}(w_0)  = \h(\rho_\ell-w_0(\rho_\ell))$,
so let us express this quantity in the basis of the simple roots.
\begin{enumerate}
\item In type $A_n$, we have by \cite{BOURB},
\begin{equation}\label{om_A}
\begin{split}
\omega_i & = \frac{1}{n+1}[ 
(n-i+1)\al_1+2(n-i+1)\al_2+\cdots+ (i-1)(n-i+1)\al_{i-1} 
\\
& \hspace{5.5cm} + i (n-i+1) \al_i + i(n-i)\al_{i+1} + \cdots + i \al_n]
\end{split}
\end{equation}
Moreover, $w_0(\al_i) = -\al_{n+1-i}$, thus by \Cref{om_A},
$w_0(\om_i) = -\om_{n+1-i}$, and $w_0(\rho_\ell) = \om_1+\cdots+\om_\ell$.
But by \Cref{om_A} again, we see that
\begin{equation}\label{om_A_1}
\h(\om_i) = \frac{i(n-i+1)}{2} = \h(\om_{n-i+1}).
\end{equation}
Therefore,
\begin{align*}
\sL_{\overline{\rho}_\ell}(w_0) & = \h(\rho_\ell - w_0(\rho_\ell)) && 
\\
& = \h(\om_{n}+\cdots+\om_{n-r+1}) + \h(\om_1+\cdots +\om_\ell)  &&
\\
& =  2\h(\om_{1})+\cdots+ 2\h(\om_\ell) && \text{by \Cref{om_A_1}} 
\\
& = \sum_{i=1}^{\ell} 2\left( \frac{i(n-i+1)}{2} \right) && \text{by \Cref{om_A_1}} 
\\
& = (n+1)\sum_{i=1}^{\ell} i - \sum_{i=1}^{\ell} i^2 && 
\\
& = \frac{\ell(\ell+1)(3n-2\ell+2)}{6}  && \text{after simplification.} 
\end{align*}
\item In type $B_n$, $w_0=-1$, so 
$\rho_\ell - w_0(\rho_\ell) = 2\rho_\ell.$
Moreover, following \cite{BOURB},
\begin{equation}\label{om_B}
\begin{array}{ll}
\omega_i = \al_1+2\al_2+\cdots+ (i-1)\al_{i-1} + i (\al_i+\cdots + \al_n) &  \text{ if $i<n$, and}
\\
\om_n=\frac{1}{2}(\al_1+2\al_2\cdots+n\al_n) &
\end{array}
\end{equation}
Therefore, 
\begin{align*}
\sL_{\overline{\rho}_\ell}(w_0) & = \h(2\rho_\ell) = 2\h(\rho_\ell)
\\
& = 2\h(\om_{n-\ell+1}+\cdots+\om_{n-1}+\om_n) && \text{} 
\\
&=  2\sum_{i=1}^{\ell-1} \h(\om_{n-i})+ 2\h(\om_n)) && \text{} 
\\
& = 2\sum_{i=1}^{\ell-1} \left( \frac{(n-i)(n-i+1)}{2} + i(n-i)\right) + 2\frac{n(n+1)}{4} && \text{by \Cref{om_B}} 
\\
& = \frac{3n(n+1)(2\ell-1) - 2\ell(\ell^2-1)}{6}  && \text{after computation.} 
\end{align*}
\item In type $C_n$, we again have $w_0=-1$ and 
$\rho_\ell - w_0(\rho_\ell) = 2\rho_\ell.$
Following \cite{BOURB},
\begin{equation}\label{om_C}
\omega_i = \al_1+2\al_2+\cdots+ (i-1)\al_{i-1} + i (\al_i+\cdots + \frac{1}{2}\al_n)   \text{ for all $i\leq n$}
\end{equation}
Therefore, a similar computation yields
\begin{align*}
\sL_{\overline{\rho}_\ell}(w_0) 
& = 2\sum_{i=1}^{\ell-1} \left( \frac{(n-i)(n-i+1)}{2} + (i-\frac{1}{2})(n-i)\right) + 2\frac{n^2}{2} && \text{by \Cref{om_C}} 
\\
& = \frac{(6n^2-1)\ell - \ell^2(2\ell-3)}{6}  && \text{after computation.} 
\end{align*}
\item In type $D_n$, 
the behaviour of $w_0$ depends on the parity of $n$, see \cite{BOURB}, 
but for $\ell\geq 2$, we always have $w_0(\rho_\ell)=-\rho_\ell$ (which can be seen from \Cref{om_D} below), so  
$\rho_\ell - w_0(\rho_\ell) = 2\rho_\ell$ again.
Following \cite{BOURB},
\begin{equation}\label{om_D}
\begin{array}{ll}
\omega_i = \al_1+2\al_2+\cdots+ (i-1)\al_{i-1} + i (\al_i+\cdots + \al_{n-2} + \frac{1}{2}\al_{n-1} + \frac{1}{2}\al_n) &  \text{ if $i<n-1$,}
\\
\om_{n-1}= \frac{1}{2}\left( \al_1+2\al_2+\cdots+ (n-2) \al_{n-2} + \frac{n}{2}\al_{n-1} + \frac{n-2}{2}\al_n \right) &
\\
\om_{n}= \frac{1}{2}\left( \al_1+2\al_2+\cdots+ (n-2) \al_{n-2} + \frac{n-2}{2}\al_{n-1} + \frac{n}{2}\al_n \right) &
\end{array}
\end{equation}
A similar computation as before yields
\begin{align*}
\sL_{\overline{\rho}_\ell}(w_0) 
& = 2\sum_{i=1}^{\ell-1} \left( \frac{(n-i)(n-i+1)}{2} + (i-1)(n-i)\right) + 2\frac{n(n-1)}{2} && \text{by \Cref{om_D}} 
\\
& = \frac{(\ell-1)(3n^2-3n-\ell(\ell-2))}{3} && \text{after computation.} 
\end{align*}
\end{enumerate}
\end{proof}

\begin{Th}\label{thm_sat_trunc}
We have 
$$\sL_{\overline{\rho}_\ell}(W) = \llbracket  0, 
b_{n,\ell}
\rrbracket$$
if and only if
$n\neq 2$ and $\ell\leq n$, or $n = 2$ and $\ell \in \{1,3\}$.  
\end{Th}

\begin{proof}
First, note that if $\ell=n$, then $\sL_{\overline{\rho}_\ell}=\sL$
so we have
$\sL_{\overline{\rho}_\ell}(W)=\sL(W_n)=\llbracket  0,\sL(w_0) \rrbracket$ by \cite[Theorem 7.3]{CG2022}. 
So it only remains to consider the case $\ell<n$.

We proceed by induction on $n$, similarly to \cite[Proof of Theorem 7.3]{CG2022}.
Recall that we have computed $b_{n,\ell}=\sL_{\overline{\rho}_\ell}(w_0)$ in \Cref{lem_bornes}.
Let $n\geq 2$ and
assume the following induction hypothesis: $\sL_{\overline{\rho}_\ell}(W) = \llbracket  0, b_{n,\ell} \rrbracket$  for all $\ell<n$.
We want the rank to appear clearly in the notation (for the purpose of induction), so we write 
$\sL_{\overline{\rho}_\ell} : W_n\to\N$ for the atomic length in rank $n$.
Now, consider $\sL_{\overline{\rho}_\ell} : W_{n+1} \to \N$ with $\ell < n+1$.
In order to show that $\sL_{\overline{\rho}_\ell}(W_{n+1})= \llbracket  0, b_{n+1,\ell} \rrbracket$,
is suffices to show that 
\begin{equation}\label{eq1}
\left\llbracket  0, \frac{b_{n+1,\ell}}{2} \right\rrbracket \subseteq \sL_{\overline{\rho}_\ell}(W_{n+1})
\end{equation}
and to use  \cite[Theorem 5.10]{CG2022} to conclude.

In order to do that, let $I=\{2,\ldots, n+1\}$ and consider the corresponding standard parabolic subgroup $W_I=\langle s_2,\ldots, s_{n+1}\rangle \leq W_{n+1}$,
so that $W_I \simeq W_{n}$ 
Therefore, by induction hypothesis, we have
$$\sL_{\overline{\rho}_\ell} (W_I) = \llbracket 0,b_{n,\ell}\rrbracket.$$
This implies 
$$\llbracket  0, b_{n,\ell} \rrbracket = \sL_{\overline{\rho}_\ell}(W_I) \subseteq \sL_{\overline{\rho}_\ell}(W_{n+1}).$$
Therefore, it suffices to show that 
\begin{equation}\label{eq2}
b_{n,\ell}\geq  \frac{b_{n+1,\ell}}{2}
\end{equation}
to prove \Cref{eq1}. We prove \Cref{eq2} case by case by looking at the different Dynkin types and using
\Cref{lem_bornes}.
\begin{enumerate}
    \item In type $A_n$, we have
\begin{align*}
b_{n,\ell} -  \frac{b_{n+1,\ell}}{2} &= \frac{\ell(\ell+1)(3n-2\ell-1)}{12}
\\
& >  \frac{\ell(\ell+1)(\ell-1)}{12} && \text{since $\ell < n$}
\\
&\geq  0 && \text{since $\ell\geq 1$.}
\end{align*}
\item In type $B_n$, we have
\begin{align*}
b_{n,\ell} -  \frac{b_{n+1,\ell}}{2} &=
 \frac{  3(n+1)(2\ell-1)(n-2) - 2\ell(\ell^2-1) }{12}
\\
& > \frac{3(\ell+1)(2\ell-1)(\ell-2) - 2\ell(\ell^2-1) }{12} && \text{since $\ell < n$}
\\
& = \frac{(\ell+1)(4\ell^2-13\ell+6) }{12} && \text{}
\end{align*}
Since the largest root of $4\ell^2 - 13\ell+6$ is $\frac{13+\sqrt{73}}{8}\approx 2.69$,
we get  $b_{n,\ell} -  \frac{b_{n+1,\ell}}{2} \geq 0$ as soon as $\ell\geq 3$.

We treat the remaining cases separately. 
For $\ell=1$, we obtain
$b_{n,\ell} -  \frac{b_{n+1,\ell}}{2} = \frac{(n+1)(n-2)}{4} \geq 0$ since $n\geq 2$.
For $\ell=2$, we obtain 
$b_{n,\ell} -  \frac{b_{n+1,\ell}}{2} = \frac{3n^2-3n-10}{4} \geq 0$ as soon as $n\geq 3$
since the largest root of $3n^2-3n-10$ is $\frac{3+\sqrt{129}}{6}\approx 2.39$.
Note that the only remaining case $n=2$ and $\ell=2$ (i.e. $\rho_\ell=\rho$ in rank $2$) is not saturating \cite[Theorem 7.3]{CG2022}. 
\item In type $C_n$, we have
\begin{align*}
b_{n,\ell} -  \frac{b_{n+1,\ell}}{2} &=
 \frac{  \ell(6n^2-12n-2) - 2\ell(\ell^2-1) }{12}
\\
& > \frac{  \ell(6\ell^2-12\ell-2) - 2\ell(\ell^2-1) }{12} && \text{since $\ell < n$}
\\
& = \frac{\ell(4\ell^2-9\ell-2) }{12}. &&
\end{align*}
Since the largest root of $4\ell^2 - 9\ell -2$ is $\frac{9+\sqrt{97}}{8}\approx 2.36$,
we get  $b_{n,\ell} -  \frac{b_{n+1,\ell}}{2} \geq 0$ as soon as $\ell\geq 3$.

We treat the remaining cases separately. 
For $\ell=1$, we obtain
$b_{n,\ell} -  \frac{b_{n+1,\ell}}{2} = \frac{6n^2-12n-1}{12} \geq 0$ as soon as $n\geq 2$ (so always)
since the largest root of $6n^2-12n-1$ is $\frac{12+2\sqrt{42}}{12}\approx 2.08$.
For $\ell=2$, we obtain 
$b_{n,\ell} -  \frac{b_{n+1,\ell}}{2} = \frac{3n^2-8n-2}{3} \geq 0$ as soon as $n\geq 3$
since the largest root of $3n^2-3n-10$ is $\frac{8+2\sqrt{22}}{6}\approx 2.89$.
Again, the only remaining case $n=2$ and $\ell=2$ (i.e. $\rho_\ell=\rho$ in rank $2$) is not saturating \cite[Theorem 7.3]{CG2022}. 
\item In type $D_n$, we have
\begin{align*}
b_{n,\ell} -  \frac{b_{n+1,\ell}}{2} &=
\frac{  (\ell-1)\left( 3n(n-3) - \ell(\ell-2) \right) }{6}
\\
& > \frac{    (\ell-1)\left( 3\ell(\ell-3) - \ell(\ell-2) \right)   }{6} && \text{since $\ell < n$}
\\
& = \frac{ (\ell-1)\ell(2\ell-7) }{6} && \text{}
\\
& \geq 0 && \text{as soon as $\ell\geq 4$}.
\end{align*}

We treat the remaining cases separately. 
For $\ell=2$, we obtain
$b_{n,\ell} -  \frac{b_{n+1,\ell}}{2} = \frac{n(n-3)}{2} \geq 0$ since $n\geq 4$.
For $\ell=3$, we obtain 
$b_{n,\ell} -  \frac{b_{n+1,\ell}}{2} = n^2-3n-1 \geq 0$ as soon as $n\geq 4$ (so always)
since the largest root of $n^2-3n-1$ is $\frac{3+\sqrt{13}}{2}\approx 3.30$.
\end{enumerate}
\end{proof}

\section{\texorpdfstring{Entropy of permutations in affine type $C_{n}^{(1)}$}{Entropy of permutations in affine type Cn(1)}} \label{sec_univB}

\subsection{\texorpdfstring{The type $A_{2n}^{(1)}$ embedding}{The type A2n(1) embedding}}

\label{Subsec_typeBEmbedding}The goal of this section is to establish that
each nonnegative integer can also be regarded as the entropy of an affine
permutation of type $C_{n}^{(1)}$ once the associated Weyl group is realised
properly.\ The idea will be to identify this entropy as an atomic length for a
dominant weight in the affine root system of affine type $A_{2n}^{(2)}$. In
this section, we fix the integer $n$ and denote by $W_{C}$ and $W$ the affine
Weyl groups of type $C_{n}^{(1)}$ and $A_{2n}^{(1)}$, respectively. Then, it is classical to
realise $W_{C}$ as the subgroup of $W$ with generators
\[
\left\{
\begin{array}
[c]{l}
s_{n}^{C}=s_{n}\\
s_{i}^{C}=s_{i}s_{2n-i},i=1,\ldots,n-1\\
s_{0}^{C}=s_{2n}s_{0}s_{2n}
\end{array}
\right.
\]
where the $s_{i},i=0,\ldots,2n$ are the usual generators of $W$. It is then
easy to check that with this realisation, the elements of $W_{C}$ are
precisely those satisfying the relation
\[
w(2n+1)=2n+1\text{ and }w(2n+1-i)=2n+1-w(i)\text{ for any }i=1,\ldots,2n.
\]
and $w(2n+1)=2n+1$.\ Alternatively, the elements in
$W_{C}$ are exactly those in $W$ such that
\[
w(-i)=-w(i)\text{ for any }i\in\mathbb{Z}
\]
but the first characterization is easier to see in the window notation of the
elements of $W_{C}$.

We will now realize the entropy of the elements in $W_{C}$ as an atomic length
for the affine root system of type $A_{2n}^{(2)}$ with Dynkin diagram
\[
\overset{0}{\circ}\Longleftarrow\circ-\circ\cdots\circ-\circ\Longleftarrow
\overset{n}{\circ}.
\]
In fact, we will see this affine root system in that of type $A_{2n}^{(1)}$, as the subsystem of type
$A_{2n}^{(2)}$ with simple roots
\begin{equation}
\left\{
\begin{array}
[c]{l}
\alpha_{n}^{C}=2\alpha_{n}\\
\alpha_{i}^{C}=\alpha_{i}+\alpha_{2n-i},i=1,\ldots,n-1\\
\alpha_{0}^{C}=\alpha_{0}+\alpha_{2n}
\end{array}
\right.  \label{alpha_B}
\end{equation}
and fundamental dominant weights
\[
\left\{
\begin{array}
[c]{l}
\Lambda_{n}^{C}=2\Lambda_{n}\\
\Lambda_{i}^{C}=\Lambda_{i}+\Lambda_{2n-i},i=1,\ldots,n-1\\
\Lambda_{0}^{C}=\Lambda_{0}+\Lambda_{2n}
\end{array}
\right.
\]
Now consider the weight
\[
\pi=\sum_{i=1}^{n-1}2\Lambda_{i}^{C}+2\Lambda_{0}^{C}+\Lambda_{n}^{C}
=2\sum_{i=0}^{2n}\Lambda_{i}=2\rho\text{.}
\]
For any $w$ in $W_{C}$, we can decompose $\pi-w(\pi)$ on the basis $\alpha
_{i}^{C},i=0,\ldots,n$ on the form
\[
\pi-w(\pi)=\sum_{i=1}^{n-1}a_{i}^{C}\alpha_{i}^{C}+a_{0}^{C}\alpha_{0}
^{C}+a_{n}^{C}\alpha_{n}^{C}.
\]
Using (\ref{alpha_B}), we then obtain
\[
\pi-w(\pi)=\sum_{i=1}^{n-1}a_{i}^{C}(\alpha_{i}+\alpha_{2n-i})+2a_{n}
^{C}\alpha_{n}+a_{0}^{C}(\alpha_{0}+\alpha_{2n}).
\]
On the other hand, since $\pi=2\rho$, we also have by decomposing this time
$\rho-w(\rho)$ on the basis $\alpha_{i},i=0,\ldots,2n$
\[
\pi-w(\pi)=2(\rho-w(\rho))=\sum_{i=0}^{2n}2a_{i}\alpha_{i}.
\]
We thus get
\[
\left\{
\begin{array}
[c]{l}
a_{n}^{C}=a_{n}\\
a_{i}^{C}=2a_{i}=2a_{2n-i}=a_{i}+a_{2n-1},i=1,\ldots,n-1\\
a_{0}^{C}=2a_{0}=2a_{2n}=a_{0}+a_{2n}
\end{array}
\right.  .
\]
Hence, if we denote by $\sL_{\pi}^{C}$ the atomic length of type
$A_{2n}^{(2)}$ associated with the dominant weight $\pi$, we have for any $w$ in
$W_{C}$
\[
\sL_{\pi}^{C}(w)=\sum_{i=0}^{n}a_{i}^{C}=\sum_{i=0}^{2n}
a_{i}=\sL(w)=E(w).
\]
In other terms the values taken by $\sL_{\pi}^{C}$ on $W_{C}$ coincide
with the entropy of the elements of $W_{C}$ regarded as affine permutations of
type $A_{2n}^{(1)}$.


\subsection{\texorpdfstring{The problem of the universality of the entropy for affine type $C$ permutations}{The problem of the universality of the entropy for affine type C permutations}}

As explained in \Cref{Subsec_typeBEmbedding}, the values of any affine
permutation $w$ in $W_{C}$ are completely determined by the sequence
$(w(1),\ldots,w(n))$ such that
$$
\left\{
\begin{array}{l}
w(i)  \neq0\mod(2n+1)\text{ \quad for all }1\leq i\leq n,\\
w(i)   \neq w(j)\mod(2n+1)\mand w(i)\neq
-w(j)\mod(2n+1)\text{ \quad for all }1\leq i<j\leq n.
\end{array}
\right.
$$
Indeed, this is equivalent to
\[
w(i)\neq w(j)\mod (2n+1)\text{ for any }1\leq i<j\leq2n+1
\]
with $w(2n+1)=2n+1$ and $w(2n+1-i)=2n+1-w(i)$ for any $i=1,\ldots,2n$.
Observe also if we set as in \Cref{subsec_eucli_290}
\[
w(i)=x_{i}+i\text{ for }1\leq i\leq2n+1
\]
we have the equivalence
\begin{gather*}
w(2n+1-i)=2n+1-w(i)\text{ for any }i=1,\ldots,2n\\
\Updownarrow\\
x_{2n+1-i}=-x_{i}\text{ for any }i=1,\ldots,2n.
\end{gather*}
Since $x_{2n+1}=0$, we therefore get for $w$ in $W_{C}$
\[
\sL_{\pi}^{C}(w)=\sL(w)=\frac{1}{2}\sum_{i=1}^{2n+1}x_{i}
^{2}=\sum_{i=1}^{n}x_{i}^{2}
\]
where $(x_{1},\ldots,x_{n})$ runs over the set
\[
\Delta_{n}^{C}=\left\{  x=(x_{1},\ldots,x_{n})\in\mathbb{Z}\;\middle|\;\left\{
\begin{array}
[c]{l}
x_{i}+i\neq0\mod (2n+1)\text{ for }1\leq i\leq n\\
x_{i}+i\neq x_{j}+j\mod (2n+1)\text{ }1\leq i<j\leq n\\
x_{i}+i\neq-(x_{j}+j)\mod (2n+1)\text{ }1\leq i<j\leq n
\end{array}
\right.  \right\}  .
\]
Now it is classical by Lagrange's four-square theorem that the Euclidean form
$\sum_{i=1}^{n}x_{i}^{2}$ is universal on $\mathbb{Z}^{n}$ for any $n\geq4$.
We conjecture the following result which will be proved in the rest of this section when
$2n+1$ is a prime number.

\begin{Conj}
	\label{Conj_UniverB}For any integer $n\geq4$, the Euclidean form is universal
	on the subset $\Delta_{n}^{C}$ of $\mathbb{Z}^{n}$.
\end{Conj}

\subsection{Connection with an elementary problem in additive combinatorics}
\label{subsec_sumset_C}

In this paragraph, we follow the same line as in \Cref{sec_univA} and prove that the
previous conjecture is a consequence of a simple statement in additive
combinatorics.\ First observe that we have a natural action of the
hyperoctahedral group ${\mathring{W}_C}$ (the Coxeter group of type $C_{n}$)
on $V_{C}=\left(  \mathbb{Z}/(2n+1)\mathbb{Z}\right)  ^{n}$.\ The generators
$\mathtt{s}_{i},i=1,\ldots,n-1$ acts on $V_{C}$ by permuting the $i$-th and $(i+1)$-th
coordinates and the generators $\mathtt{s}_{n}$ by changing the sign of the $n$-th
coordinate. For any $x\in\mathbb{Z}^{n}$, write $\overline{x}$ for its image
modulo $n$, that is in $V_{C}$. Let us denote by $\mathcal{C}_n$ the orbit of
$\overline{a}_{0}$ with $a_{0}=(1,2,\ldots,n)$ under this action. We have in
particular
\[
\mathcal{C}_n+\mathcal{C}_n=\mathcal{C}_n-\mathcal{C}_n.
\]
Observe also that for any $x\in\mathbb{Z}^{n}$, we have the equivalence
\begin{equation}
x\in\Delta_{n}^{C}\Longleftrightarrow\overline{x}+\overline{a}_{0}\in
\mathcal{C}_n. \label{equiB}
\end{equation}
We now prove the following stronger version of Conjecture \ref{Conj_UniverB}.

\begin{Th}\label{thm_sumset_C}
	Fix $n\geq4$. The sumset equality
	\begin{equation}
	\mathcal{C}_n+\mathcal{C}_n=\mathcal{C}_n-\mathcal{C}_n=V_{\mathcal{C}_n} \label{SumsetB}
	\end{equation}
	implies Conjecture \ref{Conj_UniverB}.
\end{Th}

\begin{proof}
We proceed in two steps.
\begin{enumerate}
    \item  By using (\ref{equiB}), the Euclidean norm is universal on
$\Delta_{C}$ if and only if, for each integer $k$ there exists $x=(x_{1}
,\ldots,x_{n})\in\mathbb{Z}^{n}\mathbb{\ }$such that
\begin{equation}
k=\sum_{i=1}^{n}x_{i}^{2}=\left\Vert x\right\Vert ^{2}\text{ and }\overline
{x}+\overline{a}_{0}\in \mathcal{C}_n. \label{CB}
\end{equation}
For any $x\in\mathbb{Z}^{n}\mathcal{\ }$and any $w \in {\mathring{W}_C}$, we
have $\left\Vert w\cdot x\right\Vert ^{2}=\left\Vert x\right\Vert ^{2}$. Now
assume that $\mathcal{C}_n-\mathcal{C}_n=V_{C}$ and consider an integer $k\in\mathbb{Z}$. Since
$n\geq4$, the Euclidean norm is universal on $\mathbb{Z}^{n}\mathcal{\ }$and
there exists $x\in\mathbb{Z}^{n}\mathcal{\ }$such that $k=\left\Vert
x\right\Vert ^{2}$ and $\overline{x}\in V_{C}$. Also $\mathcal{C}_n$ coincides with the
orbit of $\overline{a}_{0}$ under the action of ${\mathring{W}_C}$. By our
assumption $V_{C}=\mathcal{C}_n-\mathcal{C}_n$, there thus exist $w_{k},w_{k}^{\prime}$ in
${\mathring{W}_C}$ such that $\overline{x}=w_{k}^{\prime}\cdot\overline{a}
_{0}-w_{k}^{-1}\cdot\overline{a}_{0}$.

\item Since we have $k=\left\Vert x\right\Vert $, we get $k=\left\Vert
u\cdot x\right\Vert ^{2}$ for any $u\in {\mathring{W}_C}$ and in particular
$k=\left\Vert w_{k}\cdot x\right\Vert ^{2}$ for the element $w_{k}$ of step
1.\ Therefore, to obtain the universality of the Euclidean norm on $\Delta
_{n}^{C}$, it suffices to show that
\[
\overline{w_{k}\cdot x}+\overline{a}_{0}=w_{k}\cdot\overline{x}+\overline
{a}_{0}\in \mathcal{C}_n.
\]
This means that, thanks to the action of ${\mathring{W}_C}$ on $\mathbb{Z}
^{n}$, it suffices to check that one can always replace the vector $x$ by the
vector $w_{k}\cdot x$ in its orbit under the action of ${\mathring{W}_C}$ so
that Condition (\ref{CB}) becomes satisfied. By definition, $\mathcal{C}_n$ is stable
under the action of ${\mathring{W}_C}$ on $V_{C}$.\ Therefore, we get the
equivalence
\[
w_{k}\cdot\overline{x}+\overline{a}_{0}\in \mathcal{C}_n\Longleftrightarrow\overline
{x}+w_{k}^{-1}\cdot\overline{a}_{0}\in \mathcal{C}_n.
\]
Also $\mathcal{C}_n$ coincides with the orbit of $\overline{a}_{0}$ under the action of
${\mathring{W}_C}$ so that $\overline{x}+w_{k}^{-1}\cdot\overline{a}_{0}\in \mathcal{C}_n$
if any only if there exists $w^{\prime} \in {\mathring{W}_C}$ such that
$\overline{x}=w^{\prime}\cdot\overline{a}_{0}-w_{k}^{-1}\cdot\overline{a}_{0}$
which is guaranteed by our hypothesis $V_{C}=\mathcal{C}_n-\mathcal{C}_n$ by choosing $w^{\prime
}=w_{k}$ as exposed in 1. In conclusion, the sumset equality $V_{C}=\mathcal{C}_n-\mathcal{C}_n$
implies that the Euclidean norm is universal on $\Delta_{n}^{C}$.
\end{enumerate}
\end{proof}

By \Cref{thm_sumset_C}, the universality of the entropy
on the affine Weyl group $W_{C}$ is a consequence of the following conjecture
(supported by computational experiments).

\begin{Conj}\label{conj_sumset_C}
	\label{ConjB-B}For any integer $n\geq1$, we have
	\[
	\mathcal{C}_n-\mathcal{C}_n=V_{C}=\left(  \mathbb{Z}/(2n+1)\mathbb{Z}\right)  ^{n}.
	\]
\end{Conj}

\begin{Rem}
Observe that the statement of \Cref{conj_sumset_C} becomes false if we replace
$\mathbb{Z}/(2n+1)\mathbb{Z}$ by $\mathbb{Z}/2n\mathbb{Z}$. For instance the
orbit of $a_{0}=(1,2)$ under the action of the signed permutations is
$\mathcal{C}_{n}^{\prime}=\{(1,2),(2,1),(3,2),(2,3)\}$ and we only have
$\mathcal{C}_{n}^{\prime}-\mathcal{C}_{n}^{\prime}\varsubsetneqq
(\mathbb{Z}/4\mathbb{Z)}^{2}$ because $(1,0)$ does not belong to
$\mathcal{C}_{n}^{\prime}-\mathcal{C}_{n}^{\prime}$.
\end{Rem}

\subsection{The prime case}

The goal of this paragraph is to prove Conjecture \ref{ConjB-B} in the case
where $2n+1=p$ is a prime number, by using Alon's combinatorial Nullstellensatz \cite{Alon}
that we now recall.

\begin{Th}
	Let $K$ be an arbitrary field and $P=P(X_{1},\ldots,X_{n})$ a polynomial in
	$K[X_{1},\ldots,X_{n}]$ with degree $d=\sum_{i=1}^{n}t_{i}$ where
	$t_{1},\ldots,t_{n}$ are nonnegative integers such that the coefficient of
	$\prod_{i=1}^{n}X_{i}^{t_{i}}$ is nonzero.\ Then, for any subsets
	$S_{1},\ldots,S_{n}$ of $K$ such that $t_{i}<\left\vert S_{i}\right\vert $,
	there exists at least an element $(x_{1},\ldots,x_{n})\in S_{1}\times
	\cdots\times S_{n}$ such that
	\[
	P(x_{1},\ldots,x_{n})\neq0.
	\]
	
\end{Th}

\medskip

Fix a prime number $p>2$, set $p=2n+1$ and resume the notation of the
previous paragraph. We shall assume that $K=\mathbb{Z}/p\mathbb{Z}$. Since the
stabiliser of $\overline{a}_{0}$ under the action of ${\mathring{W}_C}$ is
trivial, each element of the group ${\mathring{W}_C}$ can be encoded by a
sequence $[w(\overline{1}),\ldots,w(\overline{n})]\in(\mathbb{Z}
/p\mathbb{Z)}^{n}$ such that
\begin{align*}
w(\overline{i})  &  \neq0\text{ for any }i=1,\ldots,n,\\
w(\overline{i})  &  \neq w(\overline{j})\text{ and }w(\overline{i}
)\neq-w(\overline{j})\text{ for any }1\leq i<j\leq n.
\end{align*}
It is also classical to observe that the square of the Vandermonde polynomial
\[
\Delta(T_{1},\ldots,T_{n})^{2}=\prod_{1\leq i<j\leq n}(T_{i}-T_{j})^{2}\in
K[X_{1},\ldots,X_{n}]
\]
is of degree $n(n-1)$ and its coefficient in $T_{1}^{n-1}\cdots T_{n}^{n-1}$
is equal to $(-1)^{\frac{n(n-1)}{2}}n!.$ To see this, observe that
\[
\Delta(T_{1},\ldots,T_{n})=\sum_{\sigma\in\mathfrak{S}_{n}}\varepsilon
(\sigma)\prod_{i=1}^{n}T_{i}^{n-\sigma(i)}
\]
and therefore the contribution of the monomials $T_{1}^{n-1}\cdots T_{n}
^{n-1}$ are exactly the products of the form
\[
\varepsilon(\sigma)\prod_{i=1}^{n}T_{i}^{n-\sigma(i)}\times\varepsilon
(\sigma_{0}\sigma)\prod_{i=1}^{n}T_{i}^{n-\sigma_{0}\sigma(i)},\sigma
\in\mathfrak{S}_{n}
\]
where $\sigma_{0}(i)=n+1-i$ for any $i=1,\ldots,n$. Since $\varepsilon
(\sigma)\varepsilon(\sigma_{0}\sigma)=\sigma(\sigma_{0})=(-1)^{\frac
	{n(n-1)}{2}}$, we get the coefficient $(-1)^{\frac{n(n-1)}{2}}
n!\mod p$ by summing over $\mathfrak{S}_{n}$. \emph{Here it is
	crucial to notice that }$p$\emph{ does not divide }$n!$\emph{ so that
}$(-1)^{\frac{n(n-1)}{2}}n!\mod p$\emph{ is nonzero}.

\medskip

Now fix $(a_{1},\ldots,a_{n})$ in $(\mathbb{Z}/p\mathbb{Z)}^{n}$ and define
the polynomial
\[
P_{a}(X)=\prod_{1\leq i<j\leq n}(X_{i}^{2}-X_{j}^{2})\left(  (X_{i}-a_{i}
)^{2}-(X_{j}-a_{j})^{2}\right)  \in\mathbb{Z}/p\mathbb{Z}[X_{1},\ldots
,X_{n}].
\]
It sould be clear that its degree $d$ and the coefficient $c$ of
$X_{1}^{2(n-1)}\cdots X_{n}^{2(n-1)}$ in $P_{a}(X)$ are the same as those in
\[
\Delta(X_{1}^{2},\ldots,X_{n}^{2})^{2}=\prod_{1\leq i<j\leq n}(X_{i}^{2}
-X_{j}^{2})^{2}.
\]
This means we have $d=2n(n-1)$ with $c=(-1)^{\frac{n(n-1)}{2}}
n!\mod p$ which is nonzero.

\medskip

Assume now that we put $S_{i}=\mathbb{Z}/p\mathbb{Z}\setminus\{0,a_{i}\}$ and
$t_{i}=2(n-1)$ for any $i=1,\ldots,n$. Then $t_{1}+\cdots+t_{n}=2n(n-1)=d$
and
\[
2(n-1)=t_{i}<\left\vert S_{i}\right\vert =2n-1\text{ for any }i=1,\ldots,n.
\]
We can then apply the previous theorem which gives an $n$-tuple $(x_{1}
,\ldots,x_{n})$ in $S_{1}\times\cdots\times S_{n}$ such that $P_{a}
(x_{1},\ldots x_{n})\neq0.$ This means that this $n$-tuple satisfies
$x_{i}\neq0,x_{i}-a_{i}\neq0$ for any $i=1,\ldots,n$ and moreover
\[
\left\{
\begin{tabular}
[c]{l}
$x_{i}\neq x_{j},x_{i}\neq-x_{j}$\\
$x_{i}-a_{i}\neq x_{j}-a_{j},x_{i}-a_{i}\neq-(x_{j}-a_{j})$
\end{tabular}
\right.  \text{ for any }1\leq i<j\leq n.
\]
Therefore, by setting $w_{1}(i)=x_{i}$ and $w_{2}(i)=x_{i}-a_{i}$ for any
$i=1,\ldots,n$, we so get two elements $w_{1}$ and $w_{2}$ in $W$ such that
\[
a_{i}=w_{1}(i)-w_{2}(i)\text{ for any }i=1,\ldots,n.
\]
We have proved the following theorem.

\begin{Th}
	\label{Th_HallB}
	Assume $p=2n+1$ is a prime number. Then any element $a$ in $(\mathbb{Z}
	/p\mathbb{Z})^{n}$ can be written as the difference of two elements in the
	orbit $\mathcal{O}$ of $(1,2,\ldots,n)$ under the action of ${\mathring{W}_C}$ on
	$(\mathbb{Z}/p\mathbb{Z})^{n}$, that is
	\[
	\mathcal{O}-\mathcal{O}=(\mathbb{Z}/p\mathbb{Z})^{n}\text{.}
	\]
	
\end{Th}

By the discussion of \Cref{subsec_sumset_C}, the following is immediate.

\begin{Cor}
\label{Cor_UnivB}
Assume $2n+1\geq5$ is prime. Then the entropy is universal on affine permutations of type ${C}_n$.
\end{Cor}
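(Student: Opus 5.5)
The plan is to chain together the three ingredients already established in \Cref{sec_univB}. First I will recall the translation from \Cref{Subsec_typeBEmbedding}: for $w\in W_C$, written as $w(i)=x_i+i$, the entropy is
\[
E(w)=\sL_\pi^C(w)=\sum_{i=1}^n x_i^2 ,
\]
with $(x_1,\ldots,x_n)$ ranging over $\Delta_n^C$. So universality of the entropy on $W_C$ is exactly \Cref{Conj_UniverB} for this $n$. Second, I will set $p=2n+1$ prime. By \Cref{Th_HallB}, $\mathcal{O}-\mathcal{O}=(\mathbb{Z}/p\mathbb{Z})^n$, where $\mathcal{O}$ is the orbit of $\overline{a}_0=(\overline 1,\ldots,\overline n)$ under ${\mathring{W}_C}$. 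This orbit is precisely the set $\mathcal{C}_n$ of \Cref{subsec_sumset_C}; the reduction there should be read modulo $2n+1$, not modulo $n$. Hence the sumset equality \eqref{SumsetB} holds.

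Third, I will feed \eqref{SumsetB} into \Cref{thm_sumset_C}. The mechanism is the same as in the proof of \Cref{Th_EntropyA}. Given $k\in\mathbb{N}$, pick $x\in\mathbb{Z}^n$ with $\Vert x\Vert^2=k$. Write $\overline x=w'\cdot\overline a_0-w_k^{-1}\cdot\overline a_0$. Replace $x$ by $w_k\cdot x$, which preserves the norm because signed permutations are isometries of $\mathbb{Z}^n$. Then $\overline{w_k\cdot x}+\overline a_0\in\mathcal{C}_n$, so $w_k\cdot x\in\Delta_n^C$ by \eqref{equiB}. This produces $w\in W_C$ with $E(w)=k$.

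The main obstacle is the only non-formal input of the last step: the existence of $x\in\mathbb{Z}^n$ with $\Vert x\Vert^2=k$ for \emph{every} $k$. For $n\ge 4$, i.e.\ primes $p=2n+1\ge 11$, this is Lagrange's four-square theorem, which is why \Cref{thm_sumset_C} assumes $n\ge4$; the argument then goes through verbatim. The corollary, however, also covers $p=5$ and $p=7$, i.e.\ $n=2$ and $n=3$. There my first move will be to test the statement against the formula $E(w)=\sum_{i=1}^n x_i^2$ directly. Since $\Delta_n^C\subseteq\mathbb{Z}^n$, the values of the entropy are contained in the values of $x_1^2+\cdots+x_n^2$ on all of $\mathbb{Z}^n$. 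These values miss $3$ when $n=2$, and miss $7$ when $n=3$, by Legendre's three-square obstruction $4^a(8b+7)$.

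So for $n=2,3$ I expect the argument not merely to get stuck but to show that the entropy cannot be universal. As literally worded, the corollary fails at $p=5,7$, and the hypothesis should read $2n+1\ge 11$ (equivalently $n\ge4$), matching \Cref{Conj_UniverB}. I would record this explicitly: in the two small cases I would compute the finitely many residue classes to confirm which integers are attained, as a sanity check that no alternative normalisation of the entropy rescues them. I would then state the proved result for primes $2n+1\ge 11$.
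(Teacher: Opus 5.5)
Your argument is the paper's own: \Cref{Th_HallB} gives $\mathcal{C}_n-\mathcal{C}_n=(\mathbb{Z}/(2n+1)\mathbb{Z})^n$, reading the reduction modulo $2n+1$ as you do, and \Cref{thm_sumset_C} turns this into universality of $\sum_{i=1}^n x_i^2$ on $\Delta_n^C$ via Lagrange's theorem. Your caveat is also correct: the paper's ``immediate'' deduction silently needs $n\geq 4$, and since $E(w)=\sum_{i=1}^n x_i^2$ on $\Delta_n^C$, the value $3$ is never attained when $2n+1=5$ and the value $7$ is never attained when $2n+1=7$, so the hypothesis should read ``$2n+1\geq 11$ is prime''.
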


\begin{Rem}
    It is a natural question to ask for a similar result for the affine Weyl
    groups of types $B_{n}^{(1)}$ and $D_{n}^{(1)}$.\ We can proceed as for the
    affine group of type $C_{n}^{(1)}$ and realise them in a Weyl group of type
    $A_{2n}^{(1)}$. In fact they are both subgroups of $W_{C}$ characterised by
    parity conditions.\ Unfortunately we did not find a natural way to encode
    these parity conditions by using zeros of a convenient multivariable
    polynomial similar to the previous polynomial $P_{a}(X)$ introduced in our
    application of Alon's Nullstellensatz theorem.\ We think that the entropy is
    yet universal on both Weyl groups of types types $B_{n}^{(1)}$ and
    $D_{n}^{(1)}$ when $n$ is larger than an optimal bound.\ As for the general
    case of type $C_{n}^{(1)}$, a proof would probably necessitate a
    generalisation of Hall theorem to each of the classical Weyl groups of type
    $B_{n},C_{n}$ and $D_{n}$.
\end{Rem}

\section{Universality of the atomic length in large rank}
\label{sec_large_rank}

The goal of this section is to study the universality of the atomic length for
any classical affine root system for the weight
\[
\rho^{\vee}=h\Lambda_{0}+\overline{\rho}^{\vee}
\]
where $h$ is the Coxeter number given in Table \ref{TableX} and
$\overline{\rho}^{\vee}$ is the sum of the fundamental weights of its
associated finite dual root system.\ We refer the reader to \cite{Carter2005} for a complete
review on affine root systems and only recall in the sequel the material that
we need. In particular the classical affine root systems fall into seven
families classified according to their Cartan matrix $A$. The dual of an
affine root system with Cartan matrix $A$ is just the affine root system with
Cartan matrix $^{t}A$. For simply laced root system (as for type $A_{n}^{(1)}$), the Cartan matrix is symmetric so that $\rho^{\vee}=\rho$ is the sum of
the fundamental affine dominant weights.\ In the rest of this section, we will
consider an affine root system $X_{n}^{(a)}$ of rank $n$ and $a=1$ (resp. $2$)
in the untwisted (resp. twisted) case.\ We will also assume that $X_{n}^{(a)}\neq A_{n}^{(1)}$ since the case of $A_{n}^{(1)}$ has already been
studied in Section \ref{sec_univA}. When the affine root system considered has an
underlying finite root system of type $C_{n}$, the weight $\rho^{\vee}$ does
not belong to the weight lattice because $\overline{\rho}^{\vee}$ lies in a
weight lattice of type $B_{n}\varsupsetneq C_{n}$.\ Nevertheless, we will see
that the atomic length associated to $\rho^{\vee}$ takes nonnegative integer
values except when $X_{n}^{(a)}=A_{2n}^{(2)}$ where it takes nonnegative half-integer values.\ When $X_{n}^{(a)}\neq
A_{2n}^{(2)}$ (resp. $X_{n}^{(a)}=A_{2n}^{(2)}$), we will establish that it is
universal on $\mathbb{N}$ (resp. $\frac{1}{2}\mathbb{N}$) when the rank $n$ is
greater or equal to an explicit lower bound.

\subsection{\texorpdfstring{The atomic length for $\rho^{\vee}$}{The atomic length for ρ∨}}

By analogy with Section \ref{sec_univA} where $\rho^{\vee}=\rho$ for the root system of
type $A_{n}^{(1)}$, we will denote by $\sL$ the atomic length
associated to $\rho^{\vee}$ for our affine root system of type $X_{n}^{(a)}$.
The underlying classical finite root system is of type $\mathring{X}_{n}
=B_{n},C_{n}$ or $D_{n}$ and realised in the Euclidean space 
$\mathbb{R}^{n}=
{\textstyle\bigoplus\limits_{i=1}^{n}}
\mathbb{R\varepsilon}_{i}$.\ The root lattice of type $\mathring{X}_{n}$
is the $\mathbb{Z}$-lattice generated by the set $S$ of positive roots with

\begin{equation}
S=\left\{
\begin{array}
[c]{l}
\{\alpha_{i}=\varepsilon_{i}-\varepsilon_{i+1}\mid1\leq i<n,\alpha
_{n}=\varepsilon_{n}\}\text{ in type }B_{n},\\
\{\alpha_{i}=\varepsilon_{i}-\varepsilon_{i+1}\mid1\leq i<n,\alpha
_{n}=2\varepsilon_{n}\}\text{ in type }C_{n},\\
\{\alpha_{i}=\varepsilon_{i}-\varepsilon_{i+1}\mid1\leq i<n,\alpha
_{n}=\varepsilon_{n-1}+\varepsilon_{n}\}\text{ in type }D_{n}.
\end{array}
\right.  \label{Simple}
\end{equation}

The affine Weyl group $W$ of $X_{n}^{(a)}$ is the semi-direct product
${\mathring{W}}\ltimes M^{}$ where $M^{}$ is a sub $\mathbb{Z}$-lattice
of the root lattice given in \Cref{table}.
\begin{table}[h!]
\[
\label{TableX}
\begin{array}{@{}l@{\hskip 20pt}l@{\hskip 20pt}l@{\hskip 20pt}l@{\hskip 20pt}l@{}}
\hline
X_{n}^{(a)} & \mathring{X}_{n} & M^{} & \Vert{x}\Vert^{2} & h \\
\hline
B_{n}^{(1)} & B_{n} &
{\textstyle\bigoplus\limits_{i=1}^{n-1}}\mathbb{Z}\alpha_{i}\oplus2\mathbb{Z}\alpha_{n} &
\Vert{x}\Vert_{2}^{2} & 2n \\
C_{n}^{(1)} & C_{n} &
{\textstyle\bigoplus\limits_{i=1}^{n-1}}2\mathbb{Z}\alpha_{i}\oplus\mathbb{Z}\alpha_{n} &
\frac{1}{2}\Vert{x}\Vert_{2}^{2} & 2n \\
D_{n}^{(1)} & D_{n} &
{\textstyle\bigoplus\limits_{i=1}^{n}}\mathbb{Z}\alpha_{i} &
\Vert{x}\Vert_{2}^{2} & 2n-2 \\
A_{2n-1}^{(2)} & C_{n} &
{\textstyle\bigoplus\limits_{i=1}^{n}}\mathbb{Z}\alpha_{i} &
\Vert{x}\Vert_{2}^{2} & 2n-1 \\
A_{2n}^{(2)} & C_{n} &
{\textstyle\bigoplus\limits_{i=1}^{n-1}}\mathbb{Z}\alpha_{i}\oplus\frac{1}{2}\mathbb{Z}\alpha_{n} &
\Vert{x}\Vert_{2}^{2} & 2n+1 \\
D_{n+1}^{(2)} & B_{n} &
{\textstyle\bigoplus\limits_{i=1}^{n}}\mathbb{Z}\alpha_{i} &
2\Vert{x}\Vert_{2}^{2} & n+1\\
\hline
\end{array}
\]
\caption{Root lattices in affine classical types}
\label{table}
\end{table}

In particular, each element $w$ in $W$ can be written uniquely on the form
$w=\overline{w}t_{{y}}=t_{{x}}\overline{w}$ where ${x}=\overline
{w}^{-1}({y})$, $\overline{w}$ belongs to the finite classical Weyl group
${\mathring{W}}$ and ${y}$ belongs to $M^{}$.\ It then follows from Lemma
8.1 in \cite{CG2022} that the atomic than $\sL$ takes the form

\begin{equation}
\sL(w)=\sL_{\overline{\rho}^{\vee}}(\overline{w})+\frac{h^{2}
}{2}\Vert{x}\Vert ^{2}-h\langle\overline{\rho}^{\vee}
,{x}-\overline{w}^{-1}({x})\rangle.\label{L(W)Gne}
\end{equation}

where the values of $h$ and the expression of $\Vert{x}\Vert
^{2}$ in terms of the usual Euclidean norm $\left\Vert \cdot\right\Vert_2 ^{2}$
are given in Table \ref{table}. Here $\sL_{\overline{\rho}^{\vee}}$
is the atomic length for the weight $\overline{\rho}^{\vee}$ in the finite
Weyl group ${\mathring{W}}$. In fact, we will only need the crucial property of
$\sL_{\overline{\rho}^{\vee}}$ asserting that $\sL
_{\overline{\rho}^{\vee}}({\mathring{W}})$ is an interval in $\mathbb{N}$. More
precisely by Lemma 6.1 in \cite{CG2022} we have

\begin{equation}
\sL_{\overline{\rho}^{\vee}}({\mathring{W}})=\llbracket0,b_{n}\rrbracket\text{ with
}b_{n}=\left\{
\begin{array}
[c]{l}
\frac{1}{6}n(n+1)(4n-1)\text{ when }\mathring{X}_{n}=B_{n}\text{ or }C_{n},\\
\frac{1}{3}n(n-1)(2n-1)\text{ when }\mathring{X}_{n}=D_{n}.
\end{array}
\right.  \label{RangeLAfinite}
\end{equation}

\subsection{\texorpdfstring{Universality of $\sL$ in large rank}{Universality of sL in large rank}}

We first need the following Lemma.

\begin{Lem}
	\label{Lem_UniNorm}Assume $n\geq4$.
	
	\begin{enumerate}
		\item When $X_{n}^{(a)}\neq A_{2n}^{(2)}$, the map $f:{x}\mapsto\frac
		{1}{2}\Vert{x}\Vert ^{2}$ takes values in $\mathbb{N}$ and is
		universal on $M^{}$, that is $f(M^{})=\mathbb{N}$.
		
		\item When $X_{n}^{(a)}=A_{2n}^{(2)}$, the map $f:{x}\mapsto\frac
		{1}{2}\Vert{x}\Vert ^{2}$ takes values in $\frac{1}{2}
		\mathbb{N}$ and is universal on $M^{}$, that is $f(M^{})=\frac{1}{2}\mathbb{N}$.
	\end{enumerate}
\end{Lem}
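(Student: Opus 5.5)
The plan is a case-by-case verification along the rows of \Cref{table}. For each affine type $X_n^{(a)}\neq A_n^{(1)}$ I will describe the lattice $M$ explicitly as a sublattice of $\mathbb{Z}^n=\bigoplus_{i=1}^n\mathbb{Z}\varepsilon_i$, using the simple roots in (\ref{Simple}). I will then rewrite $f(x)=\frac12\Vert x\Vert^2$ in terms of the usual Euclidean norm $\Vert\cdot\Vert_2^2$ via the fourth column of \Cref{table}. In each case the claim should reduce to Lagrange's four-square theorem: since $n\geq 4$, every nonnegative integer is a sum of $n$ squares, because extra coordinates can be set to $0$.

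The lattices fall into three shapes.
\begin{itemize}
\item \textbf{Types $B_n^{(1)}$, $D_n^{(1)}$ and $A_{2n-1}^{(2)}$.} The roots $\varepsilon_i-\varepsilon_{i+1}$ together with $2\varepsilon_n$ (namely $2\alpha_n$ in type $B$, or $\alpha_n$ in type $C$), or together with $\varepsilon_{n-1}+\varepsilon_n$ in type $D$, generate the lattice $\{x\in\mathbb{Z}^n \mid x_1+\cdots+x_n\equiv 0 \bmod 2\}$. Here $\Vert x\Vert^2=\Vert x\Vert_2^2$.
\item \textbf{Type $C_n^{(1)}$.} The generators $2(\varepsilon_i-\varepsilon_{i+1})$ and $2\varepsilon_n$ give $M=2\mathbb{Z}^n$.
\item \textbf{Types $A_{2n}^{(2)}$ and $D_{n+1}^{(2)}$.} The generators $\varepsilon_i-\varepsilon_{i+1}$ together with $\frac12\alpha_n=\varepsilon_n$ (resp. $\alpha_n=\varepsilon_n$ in type $B_n$) give $M=\mathbb{Z}^n$.
\end{itemize}

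Next, each case reduces to a sum of squares.

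In the even-sum lattice, $f(x)=\frac12\sum x_i^2$. Since $x_i^2\equiv x_i \bmod 2$, the sum $\sum x_i^2$ has the same parity as $\sum x_i$, so $f$ is integer-valued on $M$. Conversely, given $k\in\mathbb{N}$, write $2k=\sum_{i=1}^n x_i^2$ by Lagrange. The same parity remark shows that $\sum x_i$ is even, so $x\in M$ and $f(x)=k$.

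For $C_n^{(1)}$, write $x=2y$ with $y\in\mathbb{Z}^n$. Then $f(x)=\frac12\cdot\frac12\cdot 4\Vert y\Vert_2^2=\sum y_i^2$, which takes every value in $\mathbb{N}$.

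For $D_{n+1}^{(2)}$, $f(x)=\frac12\cdot 2\Vert x\Vert_2^2=\sum x_i^2$, again universal on $\mathbb{N}$.

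Finally, for $A_{2n}^{(2)}$, $f(x)=\frac12\sum x_i^2$ on $\mathbb{Z}^n$. It takes values in $\frac12\mathbb{N}$, and every element $\frac m2$ is attained by writing $m$ as a sum of $n\ge4$ squares.

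There is no real obstacle here. The only point requiring care is reading off correctly, from \Cref{table} and (\ref{Simple}), which sublattice $M$ is in each type and which normalisation of $\Vert\cdot\Vert^2$ applies. In particular, the factor $\frac12$ in $\frac12\mathbb{Z}\alpha_n$ for $A_{2n}^{(2)}$ is exactly what produces half-integer values. Once these identifications are right, each case is a one-line application of the four-square theorem, combined with the parity observation $x_i^2\equiv x_i \bmod 2$ for the even-sum lattice.
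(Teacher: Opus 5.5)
Your proposal is correct and follows the paper's proof. Like the paper, you identify $M$ in each type as the even-sum lattice, $(2\mathbb{Z})^n$ or $\mathbb{Z}^n$, rescale $\Vert\cdot\Vert^2$ according to the table, and conclude by Lagrange's four-square theorem, using the parity equivalence $\Vert x\Vert_2^2\equiv x_1+\cdots+x_n \bmod 2$ in types $B_n^{(1)}$, $D_n^{(1)}$ and $A_{2n-1}^{(2)}$.
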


\begin{proof}
	According to the description of the simple roots in (\ref{Simple}), we get the following
	alternative description of the lattice $M^{}$
\renewcommand{\arraystretch}{1.3}
    \[
\begin{array}{@{}l@{\hskip 20pt}l@{\hskip 20pt}l@{\hskip 20pt}l@{}}
\hline
X_{n}^{(a)} & \mathring{X}_{n} & M & \frac{1}{2}\Vert {x} \Vert^{2} \\
\hline
B_{n}^{(1)} & B_{n} & \mathbb{Z}_{0}^{n} &
\frac{1}{2}\Vert {x} \Vert_{2}^{2} \\

C_{n}^{(1)} & C_{n} & (2\mathbb{Z})^{n} &
\frac{1}{4}\Vert {x} \Vert_{2}^{2} \\

D_{n}^{(1)} & D_{n} & \mathbb{Z}_{0}^{n} &
\frac{1}{2}\Vert {x} \Vert_{2}^{2} \\

A_{2n-1}^{(2)} & C_{n} & \mathbb{Z}_{0}^{n} &
\frac{1}{2}\Vert {x} \Vert_{2}^{2} \\

A_{2n}^{(2)} & C_{n} & \mathbb{Z}^{n} &
\frac{1}{2}\Vert {x} \Vert_{2}^{2} \\

D_{n+1}^{(2)} & B_{n} & \mathbb{Z}^{n} &
\Vert {x} \Vert_{2}^{2}
\\
\hline
\end{array}
\]
	where $\mathbb{Z}_{0}^{n}:={\{x=(x}_{1},\ldots,x_{n})\in\mathbb{Z}^{n}\mid
	{x}_{1}+\cdots+x_{n}=0\mod 2\}$. The case $X_{n}^{(a)}=D_{n+1}^{(2)}$ is an easy consequence of Lagrange's four-square
	theorem.\ When $X_{n}^{(a)}=C_{n}^{(1)}$, one can set ${x}=2{x}^{\prime}$
	with ${x}^{\prime}\in\mathbb{Z}^{n}$, get $\frac{1}{4}\left\Vert
	{x}\right\Vert _{2}^{2}=\left\Vert {x}^{\prime}\right\Vert _{2}^{2}$ and
	conclude similarly.\ In type $B_{n}^{(1)},D_{n}^{(1)}$ and $A_{2n-1}^{(2)}$,
	it suffices to observe that for any ${x}\in\mathbb{Z}^{n}$, we have the
	equivalence
	\[
	\Vert{x}\Vert _{2}^{2}=0\mod 2\Longleftrightarrow
	{x}_{1}+\cdots+x_{n}=0\mod 2
	\]
	to get the set equality $f(M^{})=\mathbb{N}$. Finally, the last case
	$X_{n}^{(a)}=A_{2n}^{(2)}$ is also a consequence of Lagrange's four-square
	theorem$.$
\end{proof}

\medskip

Let us now observe that when $\overline{w}({x})={x}$ (i.e. ${x}
\in\mathrm{Stab}({x})$), we get by (\ref{L(W)Gne})
\[
\sL(w)=\sL_{\overline{\rho}^{\vee}}(\overline{w})+\frac{h^{2}
}{2}\Vert{x}\Vert ^{2}.
\]
The finite Dynkin diagram $\mathring{X}_{n}$ (obtained by removing the zero
node in $X_{n}^{(a)}$) can be pictured on the form
\[
\overset{1}{\circ}-\overset{2}{\circ}\cdots\overset{n-5}{\circ}-\overset
{n-4}{\circ}-\left[  \mathring{X}_{4}\right]
\]
where $\left[  \mathring{X}_{4}\right]  $ is the sub-Dynkin diagram of type
$\mathring{X}_{4}$ obtained by keeping only the nodes labelled by
$n-3,n-2,n-1$ and $n$ in $\mathring{X}_{n}$.\ Then $V_{4}=
{\textstyle\bigoplus\limits_{i=0}^{3}}
\mathbb{Z\alpha}_{n-i}$ is the root lattice associated to $\left[
\mathring{X}_{4}\right]  $.\ It is easy to check that it is stabilised by
${\mathring{W}}^{(4)}$, the subgroup of ${\mathring{W}}$ fixing the lattice
$V_{4}$ which is a Weyl group of type $\mathring{X}_{n-4}$.

\begin{Exa}
	Assume $\mathring{X}_{n}$ is of type $C_{6}$.\ Then ${\mathring{W}}^{(4)}$ is
	the subgroup of ${\mathring{W}}$ fixing the four last coordinates of ${x}
	\in\mathbb{Z}^{6}$.\ This is a hyperoctahedral group of rank $2$ acting on
	$\mathbb{Z}^{6}$ by permuting or changing the signs of the two first coordinates.
\end{Exa}

For any ${x}$ in $V_{4}$ and any $\overline{w}$ in $\mathring
{W}^{(4)}$, we so get
\[
\sL(w)=\sL_{\overline{\rho}^{\vee}}(\overline{w})+\frac{h^{2}
}{2}\Vert{x}\Vert ^{2}.
\]
According to Lemma \ref{Lem_UniNorm}, when $X_{n}^{(a)}\neq A_{2n}^{(2)}$, we
know that $f:{x}\longmapsto\frac{1}{2}\Vert{x}\Vert ^{2}$ is
universal.\ Therefore, by considering the elements in $W$ of the form
$w=t_{{x}}\overline{w}$ with $(\nu,\overline{w})\in V_{4}\times\mathring
{W}^{(4)}$, we get the inclusion
\[
\sL(W)\supseteq
{\textstyle\bigcup\limits_{k\in\mathbb{N}}}
\llbracket  h^{2}k,h^{2}k+b_{n-4} \rrbracket.
\]
By using (\ref{RangeLAfinite}), we obtain
\[
b_{n-4}=\left\{
\begin{array}
[c]{l}
\frac{1}{6}(n-4)(n-3)(4n-17)\text{ when }\mathring{X}_{n-4}=B_{n-4}\text{ or
}C_{n-4}\\
\frac{1}{3}(n-4)(n-5)(2n-9)\text{ when }\mathring{X}_{n-4}=D_{n-4}
\end{array}
\right.
\]
and thanks to the values of $h$ in Table \ref{table}, we get
\[
h^{2}k+b_{n-4}\geq h^{2}(k+1)\Longleftrightarrow b_{n-4}\geq h^{2}
\Longleftrightarrow\left\{
\begin{array}
[c]{l}
n\geq15\text{ when }X_{n}^{(a)}=B_{n}^{(1)},C_{n}^{(1)},A_{2n-1}^{(2)}\\
n\geq16\text{ when }X_{n}^{(a)}=D_{n}^{(1)},\\
n\geq10\text{ when }X_{n}^{(a)}=D_{n+1}^{(2)}.
\end{array}
\right.
\]
When it happens, we thus obtain
\[
\sL(W)=\mathbb{N}\text{.}
\]
Now, when $X_{n}^{(a)}=A_{2n}^{(2)}$, we can argue similarly and get
\[
\sL(W)\supseteq
{\textstyle\bigcup\limits_{k\in\frac{1}{2}\mathbb{N}}}
\llbracket h^{2}k,h^{2}k+b_{n-4} \rrbracket.
\]
We have $h=2n+1$ and $b_{n-4}=\frac{1}{12}(n-4)(n-3)(4n-17)$ which gives
\[
b_{n-4}\geq h^{2}\Longleftrightarrow n\geq16.
\]

Let $n_0$ be the following integer, depending on the Dynkin type.
$$
\begin{array}{@{}l@{\hskip 20pt}@{}l@{\hskip 20pt}}
\hline
\text{Dynkin type} &  n_0 
\\
\hline
B_{n}^{(1)} & 15\\
C_{n}^{(1)} & 15\\
D_{n}^{(1)} & 16\\
A_{2n-1}^{(2)} & 15\\
A_{2n}^{(2)} & 16\\
D_{n+1}^{(2)} & 10\\
\hline
\end{array}
$$
We have proved the following theorem.

\begin{Th}
\label{Th_univlarge}
When $n\geq n_0$, the atomic length $\sL$ associated to the weight $\rho^{\vee}$ represents all nonnegative
integers for any classical root system $X_{n}^{(a)}\neq A_{2n}^{(2)}$ and all
nonnegative half-integers for $X_{n}^{(a)}=A_{2n}^{(2)}$.
\end{Th}

\begin{Rem}
    One can use the same method to reprove a weaker version of the universality of the atomic length $\sL$ in type $A_{n}^{(1)}$. Here the equality $b_{n-4}=\frac{1}{3}(n-4)(n-5)(n-3) \geq (n+1)^{2}=h^{2}$ is satisfies as soon as $n \geq 15$.
\end{Rem}

\section*{Acknowledgements}

We thank Emily Norton for many stimulating discussions.
The authors were supported by the Agence Nationale de la Recherche funding ANR CORTIPOM 21-CE40-001.

\bibliographystyle{alphaurl}
\bibliography{biblio}

\end{document}